\newtheorem{theorem}{Theorem}[section]
\newaliascnt{lemma}{theorem}
\newaliascnt{corollary}{theorem}
\newaliascnt{definition}{theorem}
\newaliascnt{remark}{theorem}
\newaliascnt{proposition}{theorem}
\newaliascnt{conjecture}{theorem}
\newaliascnt{example}{theorem}
\newaliascnt{problem}{theorem}
\newaliascnt{question}{theorem}
\newaliascnt{claim}{theorem}
\newtheorem{lemma}[lemma]{Lemma}
\newtheorem*{lemma*}{Lemma}
\newtheorem{corollary}[corollary]{Corollary}
\newtheorem*{corollary*}{Corollary}
\newtheorem{definition}[definition]{Definition}
\newtheorem*{definition*}{Definition}
\newtheorem{remark}[remark]{Remark}
\newtheorem*{remark*}{Remark}
\newtheorem{proposition}[proposition]{Proposition}
\newtheorem*{proposition*}{Proposition}
\newtheorem{conjecture}[conjecture]{Conjecture}
\newtheorem*{conjecture*}{Conjecture}
\newtheorem{example}[example]{Example}
\newtheorem*{example*}{Example}
\newtheorem*{problem*}{Problem}
\newtheorem{question}[question]{Question}
\newtheorem*{question*}{Question}
\newtheorem*{claim*}{Claim}
\DeclareMathOperator{\conv}{conv}
\DeclareMathOperator{\diag}{diag}
\DeclareMathOperator{\eval}{eval}
\def\K{\mathcal{K}}
\def\KK{\mathbb{K}}
\def\P{\mathcal{P}}
\def\L{\mathcal{L}}
\def\F{\mathcal{F}}
\def\cR{\mathcal{R}}
\def\T{\mathcal{T}}
\def\TL{\mathfrak{L}}
\def\R{\mathbb{R}}
\def\Z{\mathbb{Z}}
\def\N{\mathbb{N}}
\def\Q{\mathbb{Q}}
\def\TT{\mathbb{T}}
\def\eps{\varepsilon}
\DeclareMathOperator{\Tr}{Tr}
\DeclareMathOperator{\vol}{vol}
\DeclareMathOperator{\tminor}{tm}
\DeclareMathOperator{\tvol}{tvol}
\DeclareMathOperator{\tlvol}{tbvol}
\DeclareMathOperator{\tlsurf}{tbsurf}
\DeclareMathOperator{\rvol}{rvol}
\DeclareMathOperator{\qtvol}{qtvol}
\DeclareMathOperator{\tdet}{tdet}
\DeclareMathOperator{\aff}{aff}
\DeclareMathOperator{\lin}{lin}
\DeclareMathOperator{\val}{val}
\DeclareMathOperator{\bt}{bt}
\DeclarePairedDelimiter{\card}{\lvert}{\rvert}
\newcommand{\one}{\mathbf{1}}
\newcommand{\zero}{\mathbf{0}}
\DeclareMathOperator{\LogOp}{Log}
\newcommand{\Log}[1]{\LogOp\,\card{#1}}
\newcommand{\shp}[1]{\#\!\left(#1\right)}
\newcommand{\acirc}[1]{\accentset{\circ}{#1}}
\newcommand{\bLd}{\Gamma_b^d}
\DeclareMathOperator{\sgn}{sgn}
\DeclareMathOperator{\tconv}{tconv}
\DeclareMathOperator{\trk}{trk}
\newcommand\pseries[2]{#1\{\!\!\{#2\}\!\!\}} 
\DeclareFontFamily{U}{mathx}{\hyphenchar\font45}
\DeclareFontShape{U}{mathx}{m}{n}{
      <5> <6> <7> <8> <9> <10>
      <10.95> <12> <14.4> <17.28> <20.74> <24.88>
      mathx10
      }{}
\DeclareSymbolFont{mathx}{U}{mathx}{m}{n}
\DeclareMathAccent{\widebar}{0}{mathx}{"73}
\def\input@path{{./Images/}{./}}
\begin{document}

\title[Tropical Ehrhart Theory and Tropical Volume]{Tropical Ehrhart Theory\\ and Tropical Volume}

\author{Georg Loho}
\address{London School of Economics and Political Science\\
  Houghton Street\\
  London\\
  WC2A 2AE\\
UK}
\email{g.loho@lse.ac.uk}

\author{Matthias Schymura}
\address{Institute for Mathematics\\
         \'{E}cole Polytechnique F\'{e}d\'{e}rale de Lausanne\\
         CH-1015 Lausanne\\
         Switzerland}
\email{matthias.schymura@epfl.ch}

\thanks{The authors were supported by the Swiss National Science Foundation (SNSF) within the project \emph{Convexity, geometry of numbers, and the complexity of integer programming (Nr.~163071)}. GL was additionally supported by the ERC Starting Grant ScaleOpt, and MS by the SNSF project \emph{Lattice Algorithms and Integer Programming (Nr.~185030)}.}

\subjclass[2010]{14T05, 52C07, 52A38} 



\begin{abstract}
  We introduce a novel intrinsic volume concept in tropical geometry.
  This is achieved by developing the foundations of a tropical analog of lattice point counting in polytopes. 
  We exhibit the basic properties and compare it to existing measures.
  Our exposition is complemented by a brief study of arising complexity questions. 
\end{abstract}

\maketitle

\section{Introduction}

Tropical geometry is the study of piecewise-linear objects defined over the $(\max,+)$-semiring that arises by replacing the classical addition `$+$' with `$\max$' and multiplication `$\cdot$' with `$+$'.
While this often focuses on combinatorial properties, see~\cite{ItenbergMikhalkinShustin:2009,Butkovic:2010}, we are mainly interested in metric properties. 
Measuring quantities from tropical geometry turned out to be fruitful for a better understanding of interior point methods for linear programming~\cite{AllamigeonBenchimolGaubertJoswig:2018} and principal component analysis of biological data~\cite{YoshidaZhangZhang:2019}.
Moreover, it has interesting connections with representation theory~\cite{JoswigSturmfelsYu:2007,zhang2018computing} and computational complexity~\cite{gaubertmaccaig2019approximating}.

Driven by this motivation, we develop a novel volume notion for tropical convex sets by a thorough investigation of the tropical analog of lattice point counting. 
This continues the investigation of intrinsic tropical metric properties that started around a tropical isodiametric inequality~\cite{DepersinGaubertJoswig:2017} and tropical Voronoi diagrams~\cite{criadojoswigsantos2019tropical}.

Tropical polytopes are finitely generated tropical convex sets, see~\eqref{eq:definition+tropical+polytope}. 
Former work only considered the lattice points $\Z^d$ in a $d$-dimensional polytope, see in particular~\cite{butkovicmaccaig2013oninteger}. 
This idea was used to measure its \emph{Euclidean volume} and deduce the hardness to compute it by counting the integer lattice points~\cite{gaubertmaccaig2019approximating}.
These lattice points arise naturally through the representation of affine buildings as tropical polytopes~\cite{JoswigSturmfelsYu:2007}.
However, we are more interested in lattice points which are conformal with the semiring structure.
Varying the semiring as explained in Section~\ref{sec:versions+convexity} leads to two natural notions: integer lattice points in polytopes over the $(\max, \cdot)$-semiring and their image under a logarithm map over the $(\max,+)$-semiring.
This is related to the concept arising from `dequantization' but we show in Section~\ref{sec:trop+vol+revisited} how our tropical volume concept differs from the existing ones~\cite{DepersinGaubertJoswig:2017}.





The main idea leading to our novel concept of tropical volume is the following:
For a classical polytope $P \subseteq \R^d$, the Euclidean volume describes the asymptotic behavior of its \emph{Ehrhart function} $L(P,k) = \shp{kP \cap \Z^d}$, that is, the function that counts lattice points from $\Z^d$ that are contained in the~$k^{th}$ dilate of the polytope~$P$.
This discretization further refines if~$P$ is a \emph{lattice polytope}, meaning that all its vertices belong to~$\Z^d$.
In fact, Ehrhart proved that in this case $L(P,k)$ agrees with a polynomial of degree at most~$d$, for every positive integral dilation factor $k \in \Z_{>0}$ (see~\cite[Ch.~3]{beckrobins2007computing}):
\[
L(P,k) = \sum_{i=0}^d c_d(P) k^i.
\]
The polynomial on the right hand side is known as the \emph{Ehrhart polynomial} of~$P$, and the crucial point for us is that
\[
\vol(P) = \lim_{k \to \infty} \frac{L(P,k)}{k^d} = c_d(P).
\]
Now, our approach towards an intrinsic tropical volume concept is to turn this discretization process around and to establish tropical analogs to the previously described classical ideas.
This will be done in four steps:
\begin{enumerate}[i)]

 \item We define a suitable concept of tropical lattice (depending on a fineness parameter) and tropical lattice polytopes in Section~\ref{subsect:tropical+lattices}.

 \item In Section~\ref{sec:tropEhrpolys}, we develop a tropical Ehrhart theory showing that the corresponding tropical Ehrhart function exhibits polynomial behavior.
 
 \item We then take the leading coefficient of the tropical Ehrhart polynomial as the definition of tropical volume.
   
 \item Finally, we extract the metric information that is independent of the fineness parameter of the tropical lattice by using its asymptotics, and extend it to all tropical polytopes, without any integrality restriction. This is implemented in Section~\ref{sec:trop+vol+lattice+points}.
\end{enumerate}
The development of our tropical Ehrhart theory rests on making the transition from $(\R,+,\cdot)$ to the tropical semiring $\TT = (\R \cup \{-\infty\},\max,+)$  in two steps.
More precisely, we first replace addition `+' by the maximum operation to obtain the semiring $S_{\max,\cdot} = (\R_{\geq 0},\max,\cdot)$, and second we observe that for any $b \in \N_{\geq2}$, the map $x \mapsto \log_b(x)$ induces a semiring isomorphism between $S_{\max,\cdot}$ and~$\TT$.

On the one hand, this point of view motivates to introduce tropical integers as $\log_b(\Z_{\geq0})$, leading to what we call the \emph{tropical $b$-lattice} $\log_b(\Z_{\geq0})^d$ with fineness parameter $b \in \N_{\geq2}$.
And on the other hand, it allows to transfer classical Ehrhart theory on complexes of lattice polytopes to an Ehrhart theory for lattice polytopes over the various semirings which we explicitly describe in Theorem~\ref{thm:Ehrhart-max-times}, Theorem~\ref{thm:Ehrhart-max-plus-times}, and Theorem~\ref{thm:tropEhrPoly}.
These results heavily rely on the interplay of the involved semirings associated with tropical geometry, cf.~\cite{Butkovic:2010}.
While this approach is very conceptual and offers a first understanding of tropical Ehrhart theory, it has the disadvantage of lacking a useful description of the coefficients of the resulting tropical Ehrhart polynomials.

Therefore, we take a second route based on the covector decomposition that allows to triangulate a tropical lattice polytope into so-called alcoved simplices which are both tropically and classically convex polytopes.
This leads to the explicit representations of tropical Ehrhart coefficients in Theorem~\ref{thm:tropEhrCoeffsRep}, and eventually to our desired intrinsic volume concept.
As the key insight here, we observe that counting tropical lattice points in tropical dilations of alcoved simplices amounts to counting usual lattice points in dilates of diagonally transformed alcoved simplices (Lemma~\ref{lem:EhrhartSimplex}).
To assemble the Ehrhart coefficients correctly from these pieces, we need a better understanding of lower-dimensional structures of the covector decomposition, which is achieved in Section~\ref{sect:alcovedtriang}. 

As the result of the four-step-process outlined above, we define the \emph{tropical barycentric volume} $\tlvol(P)$ of a tropical polytope $P \subseteq \TT^d$ as
\[
\tlvol(P) := \max_x \, (x_1+\ldots+x_d),
\]
where the maximum is taken over all points $x \in P$ that are contained in a $d$-dimensional cell of the polyhedral complex associated to~$P$.
Our choice of name will become clear later on.

In Section~\ref{sec:properties+tropical+volume}, we investigate basic properties of the tropical barycentric volume.
We prove that it satisfies the natural tropical analogs of the fundamental properties of the Euclidean volume: monotonicity, the valuation property, rotation invariance, homogeneity, non-singularity, and multiplicativity.
In this sense $\tlvol(\cdot)$ is a meaningful and intrinsic volumetric concept for tropical geometry.

Furthermore, in Section~\ref{sec:trop+vol+revisited} we compare the tropical barycentric volume with existing volumetric measures.
For instance, it turns out to be bounded by the tropical dequantized volume $\qtvol^+(\cdot)$ defined in~\cite{DepersinGaubertJoswig:2017}.
More precisely, if $P = \tconv(M)$ is the tropical polytope defined as the tropical convex hull of the columns of $M \in \TT^{d \times m}$, then we prove in Theorem~\ref{thm:tvol_qtvol} that
\begin{align}
\tlvol(P) \leq \qtvol^+(M).\label{eqn:tbvol+qtvol+intro}
\end{align}
Motivated by this inequality, we go a step further and work towards lower-dimensional volumetric measures in Section~\ref{sec:metric+estimates+lower+volumes}.
We propose natural generalizations of the tropical barycentric volume that may serve as adequate tropical versions of the classical intrinsic volumes (or quermassintegrals) (cf.~\cite{schneider1993convex}).
For example, we define a tropical lower barycentric $i$-volume $\tlvol_i^-(P)$ of~$P = \tconv(M)$ and prove that it is upper bounded by the maximal tropical determinant of an $(i \times i)$-submatrix of~$M$ (see Theorem~\ref{thm:tlvoli_tmi}).
This extends~\eqref{eqn:tbvol+qtvol+intro}, because $\qtvol^+(M)$ equals the maximal tropical determinant of a $(d \times d)$-submatrix of~$M$, by a result in~\cite{DepersinGaubertJoswig:2017}.

We close the paper with Section~\ref{sec:computational+aspects} in which we discuss computational aspects of the problem of computing the tropical barycentric volume.
We argue that the decision problem that asks whether the tropical barycentric volume of a given tropical polytope is non-vanishing is equivalent to checking feasibility of a tropical linear program, or to deciding winning positions in mean-payoff games.
Therefore, this decision problem lies in NP~$\cap$~coNP (cf.~\cite{gaubertmaccaig2019approximating}).
Based on the computation of the tropical barycenter of a tropical simplex, we moreover devise an algorithm to determine the tropical barycentric volume of a tropical $d$-polytope with $m$ vertices, that runs in time $O(\binom{m}{d+1} d^3)$.

\section{Tropical convexity and tropical lattices}

In this section, we fix the main notation of the paper, discuss the crucial concept of an \emph{$i$-trunk} of a tropical polytope, introduce the notion of tropical lattice leading to our tropical Ehrhart theory, and finally review the relationship between different versions of convexity relevant to our studies.

\subsection{Tropical polytopes and alcoved triangulations}
\label{sect:alcovedtriang}

We denote by $\TT = (\R \cup \{-\infty\}, \oplus, \odot)$ the \emph{max-tropical semiring}, where $\oplus$ denotes the $\max$ operation and $\odot$ denotes the classical addition `$+$'.
The \emph{tropical convex hull} of a set $V \subseteq \TT^d$ is defined by 
\begin{equation} \label{eq:definition+tropical+polytope}
\tconv(V) = \bigg\{\bigoplus_{j=1}^{n} \lambda_j \odot v_j : \lambda_1,\ldots,\lambda_n \in \TT, \bigoplus_{j=1}^n \lambda_j = 0, v_1,\ldots,v_n \in V \bigg\} .
\end{equation}
If $V$ is finite, this is called a \emph{tropical polytope}.
We will switch freely between matrices and the set of their columns. 
A set is \emph{tropically convex} if it contains the tropical convex hull of each of its finite subsets.
By the tropical Minkowski-Weyl theorem~\cite{GaubertKatz:2007}, there is a unique minimal set of points generating a tropical polytope; we call these points the \emph{vertices}.

The `type decomposition' due to Develin \& Sturmfels~\cite{DevelinSturmfels:2004} shows that each tropical polytope has a decomposition into \emph{polytropes}, which are classically and tropically convex polytopes~\cite{JoswigKulas:2010}.
Following~\cite{FinkRincon:2015}, we use the name \emph{covector decomposition} for this polyhedral complex formed by the polytropes.
The vertices of the covector decomposition are the \emph{pseudovertices} and the \emph{dimension} of the tropical polytope is the maximal dimension of a polytope in the complex.

For a tropical polytope $P \subseteq \TT^d$, let this family of open polytopes be denoted by~$\F_P$.
An element $T \in \F_P$ is called an \emph{$i$-tentacle element}, if it is not contained in the closure of any $(i+1)$-dimensional polytope $Q \in \F_P$.
The following subcomplexes of $\F_P$ will be important later on and thus deserve some initial studies.

\begin{definition}[$i$-trunk] \label{def:i+trunk}
Let $P$ be a tropical polytope and let $i \in \{1,\ldots,d\}$.
We define the \emph{$i$-trunk} of $P$ as
\[
\Tr_i(P) := \bigcup \left\{ F \in \F_P : \exists\, G \in \F_P\text{ with }\dim(G) \geq i\text{ such that }F \subseteq G \right\}.
\]
\end{definition}

\noindent This means, that we obtain $\Tr_i(P)$ from $P$ after removing every $(i-1)$-tentacle element.
We always have $P = \Tr_1(P) \supseteq \Tr_2(P) \supseteq \ldots \supseteq \Tr_d(P)$.
Note that a more general concept was introduced in~\cite[Def.~2.8]{bjoernerwachs1996shellableI} for arbitrary simplicial complexes, but it was not given a name there.
In their notation, we have $\Tr_i(P) = \F_P^{(i,d)}$.

The following example shows that the $2$-trunk of a $2$-dimensional tropical polytope in $4$-dimensional space is not necessarily connected. 

\begin{example} \label{ex:non+connected+trunk}
  The tropical polytope spanned by the following points
  \[
  \begin{blockarray}{cccccc}
    a & b & c & d & e & f \\
    \begin{block}{(cccccc)}
    0 & 1 & 0 & 9 & 9 & 9 \\
    0 & 0 & 1 & 9 & 9 & 9 \\
    9 & 9 & 9 & 0 & 1 & 0 \\
    9 & 9 & 9 & 0 & 0 & 1 \\
    \end{block}
  \end{blockarray}
  \]
  is visualized in Figure~\ref{fig:weird+tropical+line}.
  All pseudovertices are marked in purple, we have the additional pseudovertices
  \[
  \begin{blockarray}{ccc}
    p & q & r \\
    \begin{block}{(ccc)}
    9 & 1 & 9 \\
    9 & 1 & 9 \\
    1 & 9 & 9 \\
    1 & 9 & 9 \\
    \end{block}
  \end{blockarray}
   \enspace .
   \]
   The maximal cells of the corresponding covector decomposition, computed with \texttt{polymake}~\cite{gawrilowjoswig2000polymake}, are
   $
   \{rp,rq, ped, pfd, qba, qca\}.
   $
\end{example}

\begin{figure}[ht]
  \begin{tikzpicture}

  \coordinate (p) at (0,5,5);
  \coordinate (q) at (5,5,0);
  \coordinate (r) at (0,0,0);

  \coordinate (a) at (6,6,0);
  \coordinate (b) at (6,5,0);
  \coordinate (c) at (5,6,0);
  \coordinate (d) at (0,6,6);
  \coordinate (e) at (0,5,6);
  \coordinate (f) at (0,6,5);

  \draw[Linesegment] (r) -- (p);
  \draw[Linesegment] (r) -- (q);

  \draw[Linesegment, fill, opacity = 0.4] (p) -- (e) -- (d) -- cycle;
  \draw[Linesegment, fill, opacity = 0.4] (p) -- (f) -- (d) -- cycle;

  \draw[Linesegment, fill, opacity = 0.4] (q) -- (b) -- (a) -- cycle;
  \draw[Linesegment, fill, opacity = 0.4] (q) -- (c) -- (a) -- cycle;

  \draw[Linesegment] (p) -- (e) -- (d) -- cycle;
  \draw[Linesegment] (p) -- (f) -- (d) -- cycle;

  \draw[Linesegment] (q) -- (b) -- (a) -- cycle;
  \draw[Linesegment] (q) -- (c) -- (a) -- cycle;

  \node[Endpoint] at (p) [label=right:$p$]{};
  \node[Endpoint] at (q) [label=below:$q$]{};
  \node[Endpoint] at (r) [label=right:$r$]{};
  \node[Endpoint] at (a) [label=above:$a$]{};
  \node[Endpoint] at (b) [label=below:$b$]{};
  \node[Endpoint] at (c) [label=above:$c$]{};
  \node[Endpoint] at (d) [label=above:$d$]{};
  \node[Endpoint] at (e) [label=below:$e$]{};
  \node[Endpoint] at (f) [label=above:$f$]{};

\end{tikzpicture}
  \caption{A 4-dimensional tropical polytope whose 2-trunk is disconnected. }
  \label{fig:weird+tropical+line}
\end{figure}
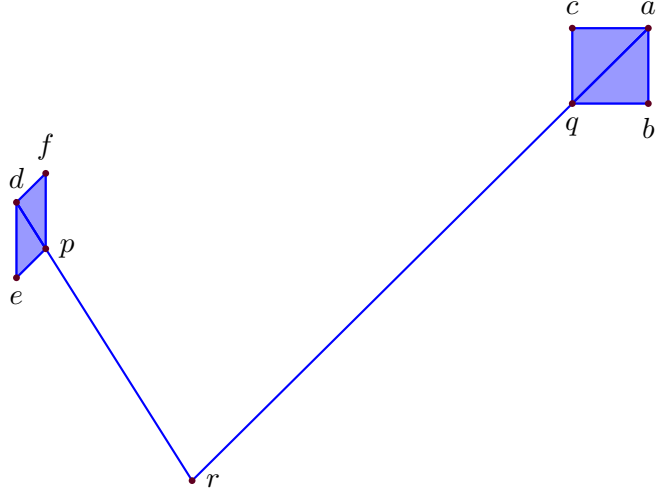

A particularly nice class of tropical polytopes are the \emph{pure tropical polytopes}, that is, those which coincide with their $d$-trunk.
The well-behaved nature of pure tropical polytopes was used to exhibit canonical exterior descriptions in~\cite{AllamigeonKatz:2017}.
In a similar spirit, the following statement uses a technique already occuring in the study of minimal external representations of tropical polytopes~\cite{AllamigeonKatz:2013}. 
In contrast to the disconnectedness of the $2$-trunk in Example~\ref{ex:non+connected+trunk}, it shows in particular that the $d$-trunk of a tropical polytope in~$\TT^d$ is a tropical polytope itself.

\begin{proposition} \label{prop:dtrunk+tropical+convex}
  The tropical convex hull of two full-dimensional pure tropical polytopes is a pure, full-dimensional tropical polytope.
  
  Consequently, the $d$-trunk of a tropical polytope in $\TT^d$ is a tropical polytope. 
\end{proposition}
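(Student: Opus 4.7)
The strategy for the first statement is to reduce purity to the topological identity $Q = \overline{\inter(Q)}$ (interior taken in the standard topology of $\R^d$) and then to verify this identity through the explicit parametrisation of tropical convex combinations. I would first record that, for a full-dimensional tropical polytope $Q \subseteq \TT^d$, the identity $Q = \overline{\inter(Q)}$ is equivalent to purity: $\Tr_d(Q) = \bigcup_{\dim C = d}\overline{C}$ is closed, its open $d$-cells form an $\R^d$-open dense subset of it, and every cell of $\F_Q$ meeting $\inter(Q)$ is a face of some $d$-cell via a local star argument in the polyhedral complex. Full-dimensionality of $Q := \tconv(P_1 \cup P_2)$ is then immediate from $Q \supseteq P_1$.

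For purity, I would exploit the regrouping that expresses every $x \in Q$ in the form $x = \alpha \odot p_1 \oplus \beta \odot p_2$ with $p_i \in P_i$ and $\max(\alpha,\beta) = 0$; this follows by splitting a tropical combination of points of $P_1 \cup P_2$ according to which generating polytope each term lies in. Purity and full-dimensionality of the $P_i$ imply that $\inter(P_i)$ is dense in $P_i$, so I may choose $p_i^{(n)} \in \inter(P_i)$ with $p_i^{(n)} \to p_i$, and by a small generic perturbation I may further arrange that $\alpha + p_{1,k}^{(n)} \neq \beta + p_{2,k}^{(n)}$ for every coordinate $k$. For such a generic pair, the map $(p_1,p_2) \mapsto \alpha\odot p_1 \oplus \beta\odot p_2$ is locally a coordinate-wise selection—the $k$-th output being identically $\alpha + p_{1,k}$ or $\beta + p_{2,k}$ throughout a neighbourhood—hence a submersion of rank $d$. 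This produces an $\R^d$-open neighbourhood of $x^{(n)} := \alpha \odot p_1^{(n)} \oplus \beta\odot p_2^{(n)}$ inside $Q$, so $x^{(n)} \in \inter(Q)$ and $x = \lim x^{(n)} \in \overline{\inter(Q)} = \Tr_d(Q)$.

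For the consequence, I apply the first part iteratively. Each closure $\overline{C}$ of a $d$-cell of $\F_P$ is a polytrope, hence a pure, full-dimensional tropical polytope, and there are only finitely many such cells; by induction the tropical convex hull
\[
 Q := \tconv\Bigl(\,\bigcup_{\dim C = d} \overline{C}\,\Bigr)
\]
is a pure, full-dimensional tropical polytope. Clearly $Q \supseteq \Tr_d(P) = \bigcup_{\dim C = d}\overline{C}$. For the reverse inclusion I would show that every pure, full-dimensional tropical subpolytope $R \subseteq P$ lies in $\Tr_d(P)$: each open $d$-cell of $R$ is a full-dimensional open subset of $P$, hence of $\inter(P)$; the same local star argument as in paragraph one then places it inside a (unique) $d$-cell of $\F_P$, so taking closures yields $R \subseteq \Tr_d(P)$.

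The main technical step is the submersion/tie-breaking argument in paragraph two: within $\inter(P_1) \times \inter(P_2)$ one must simultaneously approximate the given $(p_1,p_2)$ and satisfy the $d$ strict inequalities $\alpha + p_{1,k} \neq \beta + p_{2,k}$. These are finitely many codimension-one conditions removed from an open set in $\R^{2d}$, so such a generic choice is automatic; the nontrivial point is to verify that the induced coordinate-wise selection really does open a full $\R^d$-ball of $Q$ around $x^{(n)}$, which is the heart of the argument.
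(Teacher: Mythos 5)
Your proposal is correct and follows the same global strategy as the paper: reduce purity of $Q=\tconv(P_1\cup P_2)$ to $Q=\overline{\inter(Q)}$, regroup any point as $\alpha\odot p_1\oplus\beta\odot p_2$ with $p_i\in P_i$, approximate the $p_i$ by interior points using purity of the $P_i$, and pass to the limit by continuity of $\oplus$ and $\odot$; the deduction of the $d$-trunk statement by iterating over the (finitely many) closed $d$-cells of the covector decomposition is also the paper's route. The one place where you genuinely diverge is the local step showing that a tropical combination of interior points is an interior point of $Q$. You perturb to break all coordinate ties $\alpha+p_{1,k}=\beta+p_{2,k}$ and then observe that $(u,v)\mapsto\alpha\odot u\oplus\beta\odot v$ is locally an affine coordinate selection, hence open; this is sound (the tie locus is a finite union of hyperplanes, so generic approximants in $\inter(P_1)\times\inter(P_2)$ exist). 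The paper instead avoids genericity altogether: it exhibits an explicit full-dimensional polytrope $B_\eps$ with $p+B_\eps\subseteq\inter(P_1)$, $q+B_\eps\subseteq\inter(P_2)$, and uses that classical translation distributes over tropical combinations to get $\tconv(p,q)+B_\eps\subseteq\tconv(\inter(P_1)\cup\inter(P_2))$, so \emph{every} point of the hull of interior points is interior, with no tie-breaking needed. Your version costs a perturbation argument but needs no special gadget; the paper's is slightly slicker. On the second part you are in fact more careful than the paper, which asserts the inclusion of the resulting hull in $\Tr_d(P)$ rather tersely; your local-star argument placing each full-dimensional open cell of a pure subpolytope inside a $d$-cell of $\F_P$ fills that in correctly.
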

\begin{proof}
  Let $P$ and $Q$ be two full-dimensional pure tropical polytopes in $\TT^d$ and let $\mathring{P}$ and $\mathring{Q}$ be their interior.
  Clearly, we have $\tconv(P \cup Q) \supseteq \overline{\tconv(\mathring{P} \cup \mathring{Q})}$, where $\overline{S}$ denotes the closure of a set~$S$.
  As $P$ and $Q$ are pure, we have $\overline{\mathring{P}} = P $ and $\overline{\mathring{Q}} = Q$.
  Let $t = \bigoplus_{r \in R} \lambda_r \odot r \oplus \bigoplus_{s \in S} \lambda_s \odot s$ for some finite subsets $R \subset P, S \subset Q$ be a point in $\tconv(P \cup Q)$ and let $(r_i)_{i \in \N} \to r$ for each $r \in R$ and $(s_i)_{i \in \N} \to s$ for each $s \in S$ be sequences in $\mathring{P}$ and $\mathring{Q}$, respectively.
  By the continuity of the operations $\max$ and `$+$', we obtain
  \[
  \left(\bigoplus_{r \in R} \lambda_r \odot r_i \oplus \bigoplus_{s \in S} \lambda_s \odot s_i\right)_{i \in \N} \to t \enspace .
  \]
  Together with the other inclusion, this shows $\tconv(P \cup Q) = \overline{\tconv(\mathring{P} \cup \mathring{Q})}$.
  For $\eps > 0$, we define
  \[
  B_{\eps} = \tconv
  \begin{pmatrix}
    -\eps & \eps & 0 & \ldots & 0 \\
    -\eps & 0 & \eps & \ldots & 0 \\
    \vdots & 0 & \ddots & \ddots & \vdots \\
    - \eps & 0 & \ldots & 0 & \eps 
  \end{pmatrix} \enspace ,
  \]
  a full-dimensional polytrope.

 For any two points $p \in \mathring{P}$ and $q \in \mathring{Q}$ there is a sufficiently small $\eps > 0$ such that $p + B_{\eps} \subseteq \mathring{P}$ and $q + B_{\eps} \subseteq \mathring{Q}$.
 Then the `inflated tropical line' $\tconv(p,q) + B_{\eps}$ is contained in $\tconv(\mathring{P} \cup \mathring{Q})$.
 Therefore, each point is surrounded by a small full-dimensional polytrope in $\tconv(\mathring{P} \cup \mathring{Q})$.
 This implies that each point of $\overline{\tconv(\mathring{P} \cup \mathring{Q})}$ is in the closure of a full-dimensional cell. 
  Hence, $\tconv(P \cup Q) = \overline{\tconv(\mathring{P} \cup \mathring{Q})}$ is pure and full-dimensional.

  The polytropes in the covector decomposition of the $d$-trunk are full-dimensional pure tropical polytopes $P$.
  Hence, the tropical convex hull of their union is a full-dimensional pure tropical polytope.
  Moreover, it is contained in the $d$-trunk of~$P$, as it is a subset of $P$.
  Therefore, the tropical convex hull of the $d$-trunk of $P$ is just the $d$-trunk itself. 
%
\end{proof}
   
The covector decomposition of a tropical polytope $P = \tconv(V)$, where~$V$ has only integral entries, is formed of alcoved polytopes in the sense of Lam \& Postnikov~\cite{LamPostnikov:2007}.
They studied triangulations and lattice points of alcoved polytopes from a classical point of view, while we are heading towards tropical metric estimates.
Each such alcoved polytope has a triangulation into simplices of the form
\[
  \Delta_\pi(a) := \conv\left\{ a + e_{\pi(1)} + \ldots + e_{\pi(\ell)} : \ell=0,1,\ldots,d \right\} \enspace ,
\]
where $\pi \in S_d$ is a permutation of the coordinates and $a \in \Z^d$.
For $\pi = id$, we just write $\Delta(a) := \Delta_{id}(a)$.
We denote the simplicial complex formed by these \emph{alcoved simplices} by $\T_P$ and call it the \emph{alcoved triangulation} of~$P$.
The inequality description of $\Delta(\zero)$ is given by
\[
\Delta(\zero) = \left\{ x \in \R^d : 0 \leq x_d \leq x_{d-1} \leq \ldots \leq x_1 \leq 1 \right\}
\]
(cf.~\cite[Ch.~7]{becksanyal2018combinatorial}), where the all-zeroes vector is denoted by $\zero = (0,\ldots,0)^\intercal$.
We use the following notation to compactly index (half-)open faces of $\Delta(\zero)$: For $s = (\prec_1,\prec_2,\ldots,\prec_{d+1}) \in \left\{=,\leq,<\right\}^{d+1}$, we write
\[
\Delta^s(\zero) = \left\{ x \in \R^d : 0 \prec_{d+1} x_d \prec_d x_{d-1} \prec_{d-1} \ldots \prec_2 x_1 \prec_1 1 \right\} \enspace ,
\]
and
\[
\Delta^s(a) = a + \Delta^s(\zero) \enspace .
\]

\subsection{Tropical lattices}
\label{subsect:tropical+lattices}

Recent advances on the complexity of linear programming using tropical geometry~\cite{AllamigeonBenchimolGaubertJoswig:2018} demonstrated a fruitful use of metric estimates for tropical polyhedra.
In classical convex geometry, the number of lattice points can be interpreted as a discrete version of a volume.
This raises the question what `tropical integers' or `tropical natural numbers' should be.

The non-negative integers form a submonoid of the additive monoid $(\R,+)$ generated by $1$.
The analogous tropical construction does not lead to a rich structure, as tropical addition is idempotent, and so $0 \oplus 0 = 0$.

Another approach comes from considering $\Z$ as the prime ring of characteristic $0$.
The orbit of $1 \in \R$ by multiplication with $\Z$ yields the integers~$\Z$.
The orbit of tropical one $0 \in \TT$ by tropical multiplication with $\Z$ again yields the set~$\Z$. 
Although this perspective has been used and allows a tropical Ehrhart theory (see Section~\ref{sec:tropEhrpolys}), it is too rough for our purposes.

Instead we propose to consider the set $\Gamma_b := \log_b(\Z_{\geq 0})$ as a concept for tropical integers, where $b \geq 2$ is an arbitrarily chosen natural number.
This is natural in the sense that it respects the operation-wise transition from $(\R,+,\cdot)$ to the tropical semiring $(\R \cup \{-\infty\},\max,+)$:
\[
(\Z,+,\cdot) \quad \longrightarrow \quad (\Z_{\geq 0},\max,\cdot) \quad \longrightarrow \quad (\log_b(\Z_{\geq 0}),\max,+).
\]
As additional motivation, we observe that~$\Gamma_b$ satisfies a tropicalization of the identity
\[
\shp{[0,k \cdot v) \cap \Z_{\geq 0}} = k \cdot \shp{[0,v) \cap \Z_{\geq 0}}, \quad\text{for}\quad k,v \in \Z_{\geq 0}.
\]
Indeed, we have
\[
\shp{[-\infty,k \odot v) \cap \Gamma_b} = b^k \cdot \shp{[-\infty,v) \cap \Gamma_b}, \; \text{for} \; k \in \Z_{\geq 0} \text{ and } v \in \Gamma_b \,.
\]

\noindent Our main concept of tropical lattice is therefore the following.

\begin{definition}[Tropical $b$-lattice]
Define the \emph{tropical $b$-lattice} in $\TT^d$ by
\[
\bLd := \left(\log_b(\Z_{\geq0})\right)^d = \left\{ \left(\log_b(x_1),\ldots,\log_b(x_d)\right)^\intercal : x_1,\ldots,x_d \in \Z_{\geq0} \right\}.
\]
\end{definition}

\noindent Tropical lattice polytopes are then defined accordingly.

\begin{definition}[Tropical lattice polytopes] \label{def:tropical+lattice+polytope}
Let $b \in \N_{\geq2}$.
A tropical polytope, whose vertices all lie in~$\bLd$, is called a \emph{tropical $b$-lattice polytope}.
As we often want to vary $b$, we define \emph{(canonical) tropical lattice polytopes} as those whose vertices lie in $\TT\N^d := (\Z_{\geq0} \cup \{-\infty\})^d$. 
\end{definition}

Canonical tropical lattice polytopes were already studied with a different motivation by Zhang~\cite{zhang2018computing}.
Observe that the vertices of a canonical tropical lattice polytope lie in all tropical $b$-lattices, that is,
\[
\TT\N^d \subseteq \bigcap_{b \in \N_{\geq 2}} \log_b(\Z_{\geq0})^d \enspace .
\]
In fact, for every $m \in \TT\N = \Z_{\geq0} \cup \{-\infty\}$ and every $b \in \N_{\geq2}$ we have $m = \log_b(b^m)$, with the convention that $b^{-\infty}=0$.

\subsection{Different versions of convexity} \label{sec:versions+convexity}

Tropical convexity is mainly associated with the semiring $S_{\max,+} = (\R \cup \{-\infty\}, \max, +)$ or, by applying the semiring isomorphism $x \mapsto -x$, the semiring $(\R \cup \{\infty\}, \min, +)$.
In the notation introduced before, we have $\TT = S_{\max,+}$.
We use the latter notation whenever we need to emphasize the different semirings, and we employ the shorter and more common notation~$\TT$ otherwise.

While transferring from $S_{\max,+}$ to the semiring $S_{\max,\cdot} = (\R_{> 0} \cup \{0\},\max,\cdot)$ via the semiring isomorphism $\exp_b \colon x \mapsto b^x$ is often merely a structural reformulation, it has a benefit for our metric considerations.
The next claim is far from true for general polytopes but due to the special structure of polytropes.

\begin{proposition} \label{prop:exp+img+polytrope}
  The image under the map $\exp_b$ of a polytrope is a polytope.
\end{proposition}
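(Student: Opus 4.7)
The strategy rests on the well-known exterior description of polytropes due to Joswig and Kulas: every polytrope $P$ is cut out by finitely many half-spaces of the form $\{x : x_i - x_j \leq c_{ij}\}$ for suitable constants $c_{ij} \in \R$, together with possibly individual upper and lower bounds on each coordinate (which one may absorb by introducing a fictitious coordinate $x_0 := 0$ and running the indices $i,j$ over $\{0,1,\ldots,d\}$). Polytropes touching the boundary of $\TT^d$ are incorporated by allowing some $x_i$ to be fixed at $-\infty$ on the relevant face.

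The key step is then to observe that the coordinatewise exponential $\exp_b \colon \TT^d \to \R_{\geq 0}^d$, $y_i := b^{x_i}$ with the convention $b^{-\infty} := 0$, converts each defining tropical inequality into a classical linear one. Indeed, $x_i - x_j \leq c_{ij}$ is equivalent to $b^{x_i} \leq b^{c_{ij}} \cdot b^{x_j}$, i.e., to $y_i - b^{c_{ij}} y_j \leq 0$, which is linear in $y$. Analogously, single-coordinate bounds $a \leq x_i \leq b$ translate to $b^a \leq y_i \leq b^b$. Hence $\exp_b(P)$ is contained in the classical polyhedron $Q \subseteq \R_{\geq 0}^d$ defined by these linear inequalities, and the opposite inclusion is obtained by taking preimages $x_i = \log_b(y_i)$ (with $\log_b(0) := -\infty$). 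Since $\exp_b$ is continuous on $\TT^d$ and $P$ is compact, $\exp_b(P) = Q$ is a bounded polyhedron, hence a polytope.

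The only delicate point I anticipate is the behavior at the boundary of $\TT^d$ where some $x_i = -\infty$. This is handled cleanly because the extension $b^{-\infty} := 0$ is continuous and maps the face $\{x_i = -\infty\}$ of $\TT^d$ to the coordinate hyperplane $\{y_i = 0\}$ of $\R_{\geq 0}^d$. The inequality-to-inequality translation above remains valid verbatim on such faces, so no separate case analysis for low-dimensional polytropes on the tropical boundary is necessary. This sharp passage from difference-inequalities to linear inequalities is precisely the structural feature that fails for general tropically convex polytopes, explaining why the statement is restricted to polytropes.
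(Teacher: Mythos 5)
Your proposal is correct and follows essentially the same route as the paper: both use the Joswig--Kulas exterior description of a polytrope by inequalities of the forms $c \leq x_i$, $x_i \leq c$, and $x_i \leq x_j + c$, and observe that $\exp_b$ turns each of these into a classical linear inequality, so the image is an intersection of half-spaces. Your additional care about the $-\infty$ boundary and the remark on why the argument is special to polytropes are fine but do not change the substance of the argument.
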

\begin{proof}
  The defining inequalities of a polytrope are of the form $c \leq x_i$, $c \geq x_i$, or $x_i \leq x_j + c$, for $i \neq j$, see~\cite{JoswigKulas:2010}. As
  \[
  \exp_b\left(\{x \in \R^d \colon c \geq x_i \}\right) = \{x \in \R^d \colon \exp_b(c) \geq x_i \} \enspace ,
  \]
  and analogously with $\leq$ instead of $\geq$, as well as, 
  \[
  \exp_b\left(\{x \in \R^d \colon x_i \leq x_j + c \}\right) = \{x \in \R^d \colon x_i \leq \exp_b(c) \cdot x_j \} \enspace ,
  \]
  the statement follows by taking the intersection of such sets. 
\end{proof}

Note that, more generally, the image of a \emph{weighted digraph polyhedron}~\cite{JoswigLoho:2016} under the exponentiation map results in a particular \emph{distributive polyhedron} as studied in~\cite{FelsnerKnauer:2011}.

While a \emph{polytope over $S_{\max,+}$} is a just a tropical polytope as defined in~\eqref{eq:definition+tropical+polytope}, its image under a semiring isomorphism $\exp_b$, for some $b \in \R_{\geq 0}$, is a \emph{polytope over $S_{\max,\cdot}$}.
Proposition~\ref{prop:exp+img+polytrope} shows that we obtain a polyhedral complex subdividing a polytope~$P$ over $S_{\max,\cdot}$, as the image of the covector decomposition of the polytope $\log_b(P)$ over $S_{\max,+}$.
We call this again the covector decomposition and its vertices the pseudovertices. 

Furthermore, we consider the field of generalized convergent Puiseux series~\cite{DriesSpeissegger:1998,AllamigeonBenchimolGaubertJoswig:2018}, which we denote by $\KK = \pseries{\R}{t}$.
This gives rise to the semiring $\KK_{\geq 0} := (\KK_{> 0} \cup \{0\},+,\cdot)$.
There is a semiring homomorphism $\val$ to $S_{\max,+}$ mapping a series to its lowest exponent and mapping $0$ to $-\infty$.
By construction, we can also use the evaluation map $\eval_r$ to evaluate an element of $\KK$ at some suitable $r \in \R$.
By~\cite{DevelinYu:2007}, each polytope over $S_{\max,+}$ is the image under $\val$ of some polytope over $\KK_{\geq 0}$.
Additionally, each polytope over $\KK_{\geq 0}$ yields a polytope over $\R$ by evaluation at some suitable $r \in \R$.
Note that we don't use the evaluation map in the following but we want to emphasize on the strong (metric) connection between polytopes over $S_{\max,+}$ and over $\R$. 

A summary of the semiring isomorphisms and other involved maps is shown in Figure~\ref{fig:diagram+relation+convexities}.

\begin{figure}[ht]
  \begin{tikzpicture}

  \matrix (m) [matrix of math nodes,row sep=3em,column sep=5em,minimum width=2em]
          {
            (\R, +, \cdot) & (\KK_{> 0} \cup \{0\}, +, \cdot) \\
            (\R_{> 0} \cup \{0\}, \max, \cdot) & (\R\cup\{-\infty\}, \max, +) \\
            (\R_{> 0} \cup \{\infty\}, \min, \cdot) & (\R\cup\{\infty\}, \min, +) \\};
          \path[]
          (m-1-1) edge [<-] node [below] {$\eval_r$} (m-1-2)
          (m-1-2) edge [->] node [right] {$\val$} (m-2-2)
          (m-2-1) edge [<->] node [left] {$\frac{1}{x}$} (m-3-1)
          ([yshift=0.4ex]m-2-1.east) edge [->] node [above] {$\log_b$}([yshift=0.4ex]m-2-2.west)
          ([yshift=-0.4ex]m-2-1.east) edge [<-] node [below] {$\exp_b$} ([yshift=-0.4ex]m-2-2.west)
          ([yshift=0.4ex]m-3-1.east|-m-3-2) edge [->] node [above] {$\log_b$} ([yshift=0.4ex]m-3-2.west)
          ([yshift=-0.4ex]m-3-1.east|-m-3-2) edge [<-] node [below] {$\exp_b$} ([yshift=-0.4ex]m-3-2.west)
          (m-2-2) edge [<->] node [right] {$-x$} (m-3-2);
\end{tikzpicture}
  \caption{Commutative diagram of several semiring isomorphisms and the connection with Puiseux series. }
  \label{fig:diagram+relation+convexities}
\end{figure}
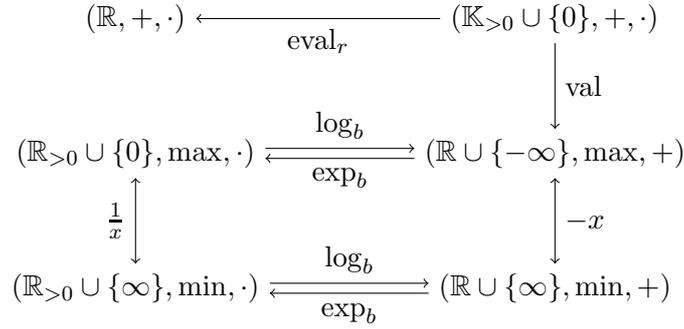

\section{Tropical Ehrhart Polynomials}
\label{sec:tropEhrpolys}

\subsection{Lattice point counting and semiring isomorphisms}

Ehrhart's theorem on the polynomiality of the counting function $k \mapsto L(P,k)$ of a lattice polytope $P \subseteq \R^d$, has the following powerful extension to complexes of lattice polytopes.

\begin{theorem}[{\cite[Cor.~5.6.1]{becksanyal2018combinatorial}}]
\label{thm:Ehrhart-latpoly-complex}
Let $\K$ be a complex of lattice polytopes in~$\R^d$ and let $\card{\K} = \bigcup_{P \in \K} P$ be its underlying point set.
Then, the counting function $k \mapsto \shp{k\card{\K} \cap \Z^d}$ agrees with a polynomial of degree $\dim(\K)$ for all positive integers $k \in \N$.
\end{theorem}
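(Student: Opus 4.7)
The strategy is to reduce the claim to the classical single-polytope Ehrhart theorem by decomposing $\card{\K}$ into the disjoint relative interiors of its cells and assembling the resulting counting functions.

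First, I would invoke the defining property of a polyhedral complex --- that every point of $\card{\K}$ lies in the relative interior of a \emph{unique} cell --- to obtain the decomposition
\[
\card{\K} \;=\; \bigcupdot_{P \in \K} \relint(P).
\]
Since dilation by a positive integer commutes with taking relative interiors and preserves disjointness, this immediately yields
\[
\shp{k\card{\K} \cap \Z^d} \;=\; \sum_{P \in \K} \shp{k \cdot \relint(P) \cap \Z^d} \quad \text{for all } k \in \N_{>0}.
\]

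Next, I would apply Ehrhart--Macdonald reciprocity to each summand. For every lattice polytope $P \in \K$, the function $k \mapsto \shp{k \cdot \relint(P) \cap \Z^d}$ agrees on $\N_{>0}$ with the polynomial $(-1)^{\dim P} L(P,-k)$, which has degree exactly $\dim(P)$ and positive leading coefficient equal to the lattice-normalized relative volume of~$P$. For cells that are not full-dimensional, this should be read with respect to the intrinsic lattice $\aff(P) \cap \Z^d$; since the vertices of~$P$ lie in~$\Z^d$, this intersection is a coset of an honest sublattice of~$\Z^d$, so the classical statement transfers verbatim and still counts the correct integer points of $k \cdot \relint(P) \subseteq \R^d$.

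Summing these polynomial identities over all $P \in \K$ exhibits $k \mapsto \shp{k\card{\K} \cap \Z^d}$ as agreeing with a polynomial in~$k$ of degree at most $\dim(\K)$ on~$\N_{>0}$. The exact degree claim then follows from the observation that only cells of maximal dimension $\dim(\K)$ contribute to the top-degree coefficient, and each such contribution is a strictly positive relative volume, so no cancellation can occur. The only genuine obstacle is the invocation of Ehrhart--Macdonald reciprocity for the relative interior of a lattice polytope; in a self-contained treatment one would prove this first by triangulating into half-open unimodular simplices and arguing at the level of $h^*$-generating functions. Everything else is bookkeeping.
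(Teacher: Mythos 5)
Your argument is correct, and it is essentially the standard proof of this statement: the paper itself gives no proof, citing Beck--Sanyal, and the argument there proceeds exactly as you propose --- partition $\card{\K}$ into the disjoint relative interiors of the cells, apply Ehrhart--Macdonald reciprocity to each open cell to get polynomiality, and observe that the top-dimensional cells contribute positive relative volumes so the degree is exactly $\dim(\K)$. No gaps; the one step you flag (reciprocity for relative interiors of possibly lower-dimensional lattice polytopes) is indeed the only nontrivial input, and it holds as you state it.
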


We saw in Section~\ref{sec:versions+convexity} that a polytope over $S_{\max,\cdot}$ has a natural structure as a polyhedral complex.
We call such a polytope a \emph{lattice polytope} if all pseudovertices in the covector decomposition are lattice points.
Thus, Theorem~\ref{thm:Ehrhart-latpoly-complex} yields the following.

\begin{theorem} \label{thm:Ehrhart-max-times}
  For a lattice polytope $P \subseteq (S_{\max,\cdot})^d$, the counting function $k \mapsto \shp{kP \cap \Z^d}$ agrees with a polynomial of degree $\dim(P) \leq d$ for all positive integers $k \in \N$.

The coefficient in front of $k^d$ equals the Euclidean volume of $P$. 
\end{theorem}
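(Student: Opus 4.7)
The plan is to reduce Theorem~\ref{thm:Ehrhart-max-times} to the classical Ehrhart theorem for complexes of lattice polytopes, namely Theorem~\ref{thm:Ehrhart-latpoly-complex}, by exhibiting $P$ as the underlying set of such a complex.

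First, I would unpack the hypothesis. As reviewed in Section~\ref{sec:versions+convexity}, a polytope $P \subseteq (S_{\max,\cdot})^d$ arises as $\exp_b(Q)$ for some tropical polytope $Q \subseteq \TT^d$, and its covector decomposition is the $\exp_b$-image of the covector decomposition of $Q$. Each cell of that decomposition is a polytrope in the max-tropical sense, and by Proposition~\ref{prop:exp+img+polytrope} its image under $\exp_b$ is a classical polytope in $\R^d$. Collect these images into a polyhedral complex $\K$ with $\card{\K} = P$ and $\dim(\K) = \dim(P)$. The standing assumption that every pseudovertex lies in $\Z^d$ then upgrades each cell of $\K$ to a classical lattice polytope, so $\K$ is a complex of lattice polytopes in $\R^d$.

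Second, I would observe that dilation commutes with this decomposition: for $k \in \N$ the set $kP$ (coordinatewise classical scaling, which is the scalar multiplication in $S_{\max,\cdot}$) is exactly the underlying point set of the dilated complex $k\K = \{kC : C \in \K\}$, whose cells remain lattice polytopes. Therefore Theorem~\ref{thm:Ehrhart-latpoly-complex} applies verbatim to $\K$ and yields that $k \mapsto \shp{kP \cap \Z^d} = \shp{k\card{\K} \cap \Z^d}$ agrees with a polynomial in $k \in \N$ of degree $\dim(\K) = \dim(P) \leq d$.

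Third, for the leading-coefficient statement I would expand the complex counting function via inclusion--exclusion over the maximal cells $C_1, \ldots, C_s$ of $\K$ and their pairwise, triple, etc.\ intersections. Each term is the Ehrhart polynomial of a lattice polytope, and the intersections of distinct maximal cells have dimension strictly less than $\dim(P)$, so they contribute only lower-order terms. If $\dim(P) = d$, the leading coefficient of each $\shp{kC_i \cap \Z^d}$ equals the Euclidean volume $\vol(C_i)$, hence the coefficient of $k^d$ in $\shp{kP \cap \Z^d}$ is $\sum_{i=1}^s \vol(C_i) = \vol(P)$, the last equality holding because the maximal cells overlap only on lower-dimensional faces. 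If $\dim(P) < d$, the polynomial has degree less than $d$, so the coefficient of $k^d$ is $0 = \vol(P)$, and the claim holds trivially.

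The main conceptual step, and the only place real work is hidden, is the identification of the $S_{\max,\cdot}$-covector decomposition of $P$ with a genuine classical polyhedral complex of lattice polytopes via Proposition~\ref{prop:exp+img+polytrope}; once this is in place, both polynomiality and the volumetric interpretation of the leading coefficient follow directly from the classical theory.
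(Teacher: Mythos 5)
Your proof is correct and follows exactly the route the paper intends: the paper derives Theorem~\ref{thm:Ehrhart-max-times} directly from Theorem~\ref{thm:Ehrhart-latpoly-complex} by viewing the covector decomposition of $P$ (via Proposition~\ref{prop:exp+img+polytrope}) as a complex of classical lattice polytopes. You have merely filled in the details the paper leaves implicit, including the inclusion--exclusion argument identifying the coefficient of $k^d$ with $\vol(P)$.
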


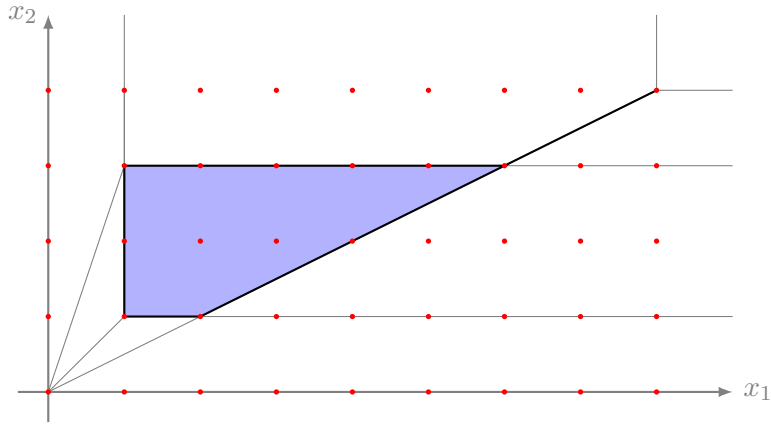
\begin{figure}[ht]
  \begin{tikzpicture}

  \Koordinatenkreuz{-0.4}{9}{-0.4}{5}{$x_1$}{$x_2$}

  \coordinate (a) at (1,1);
  \coordinate (b) at (1,3);
  \coordinate (c) at (8,4);

  \coordinate (origin) at (0,0);

  \draw[Sectorsegment] (origin) -- (a);
  \draw[Sectorsegment] (a) -- +(0,3);
  \draw[Sectorsegment] (a) -- +(8,0);
  \draw[Sectorsegment] (origin) -- (b);
  \draw[Sectorsegment] (b) -- +(0,2);
  \draw[Sectorsegment] (b) -- +(8,0);
  \draw[Sectorsegment] (origin) -- (c);
  \draw[Sectorsegment] (c) -- +(0,1);
  \draw[Sectorsegment] (c) -- +(1,0);

  \draw[black,thick, fill=blue!30] (6,3) -- (2,1) -- (1,1) -- (1,3) -- cycle -- (8,4);

  \foreach \x in {0,...,8}
  \foreach \y in {0,...,4}
  \node[LatticePoint] at (\x,\y) {};  
  
\end{tikzpicture}
  \caption{A lattice polytope over $S_{\max,\cdot}$ }
  \label{fig:trop+lat+poly+times+1}
\end{figure}

\noindent A natural question that arises is

\begin{question}
How can we characterize the vertices of those polytopes over $S_{\max,\cdot}$ for which all the pseudovertices are lattice points?
\end{question}

Going back to canonical tropical lattice polytopes (see Definition~\ref{def:tropical+lattice+polytope}), we actually obtain two different polynomials; one counting the lattice points in~$\Z^d$, the other one counting $b$-lattice points.
The first version is less natural from the semiring operations, but it was used in~\cite{gaubertmaccaig2019approximating}.

\begin{theorem} \label{thm:Ehrhart-max-plus-times}
For a canonical tropical lattice polytope $P \subseteq (S_{\max,+})^d = \TT^d$ the counting function $k \mapsto \shp{k\cdot P \cap \Z^d}$ agrees with a polynomial of degree $\dim(P)$ for all positive integers $k \in \N$.  
\end{theorem}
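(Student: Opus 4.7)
The strategy directly parallels the proof of Theorem~\ref{thm:Ehrhart-max-times}: apply the Ehrhart theory for complexes of lattice polytopes (Theorem~\ref{thm:Ehrhart-latpoly-complex}) to the covector decomposition of $P$. The plan is to verify that $\F_P$ is already a polyhedral complex of classical lattice polytopes whose underlying point set equals $P$ and whose maximum cell-dimension agrees with $\dim P$. Once this is established, the conclusion is immediate.

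The first step is to show that every pseudovertex of $P$ has coordinates in $\TT\Z$, i.e., it is either a finite integer or $-\infty$. This is because pseudovertices arise as intersection points in the covector decomposition and are thus obtained from the vertex coordinates of $P$ (all of which lie in $\TT\N$) via applications of tropical addition and multiplication, i.e., by $\max$ and $+$, both of which preserve integrality. Each polytrope $Q \in \F_P$ is, by definition, simultaneously tropically and classically convex; its classical vertex set is a subset of the pseudovertices of $P$, and hence $Q$ is a classical lattice polytope in $\R^d$. Consequently, $\F_P$ forms a polyhedral complex $\K$ of lattice polytopes with $\card{\K} = P$, and by the definition adopted in the paper $\dim \K$ equals the maximum dimension of a cell in $\F_P$, which is $\dim P$.

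Once these identifications are in place, Theorem~\ref{thm:Ehrhart-latpoly-complex} can be invoked verbatim: the counting function
\[
k \;\longmapsto\; \shp{k\card{\K} \cap \Z^d} \;=\; \shp{kP \cap \Z^d}
\]
agrees with a polynomial of degree $\dim \K = \dim P$ for every positive integer $k \in \N$, exactly as claimed.

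The main subtlety I expect concerns polytropes whose pseudovertices have $-\infty$ in some coordinate. Such cells are classically unbounded in that coordinate direction, so a naive application of Theorem~\ref{thm:Ehrhart-latpoly-complex} is not immediately legitimate. The key observation to address this is that, since every point in $P$ has $i$-th coordinate bounded above by the maximum $i$-th vertex coordinate, the set $kP \cap \Z^d$ is finite precisely when the complex of \emph{bounded} cells of $\F_P$ already covers the relevant lattice points; in the canonical-lattice setting this either forces one to restrict attention to the complex of bounded cells (whose dimension still equals $\dim P$) or to impose that all pseudovertices are finite from the outset. Modulo this bounded-versus-unbounded care, the argument reduces cleanly to the classical case.
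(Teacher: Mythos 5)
Your argument is essentially the paper's (implicit) proof: Theorem~\ref{thm:Ehrhart-max-plus-times} is stated there without a separate proof, relying on exactly your observation that the covector decomposition of a canonical tropical lattice polytope is a polyhedral complex of classical lattice polytopes (pseudovertices being obtained from the integral vertex entries by $\max$ and $+$), to which Theorem~\ref{thm:Ehrhart-latpoly-complex} applies. Your closing caveat about pseudovertices with $-\infty$ coordinates (e.g.\ the tropical unit cube $[-\infty,0]^d$, whose classical dilates contain infinitely many lattice points) identifies a genuine imprecision in the theorem's statement rather than a gap in your argument, and restricting to finite (or bounded-cell) data as you suggest is the right fix.
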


\begin{figure}[ht]
  \begin{subfigure}[h]{0.49\textwidth}
  \begin{tikzpicture}

  \Koordinatenkreuz{-0.7}{4.7}{-0.7}{3.7}{$x_1$}{$x_2$}

  \coordinate (a) at (0,0);
  \coordinate (b) at (0,{log2(3)});
  \coordinate (c) at (3,2);

  \coordinate (origin) at (0,0);

  \draw[Sectorsegment] (a) -- +(-0.6,-0.6);
  \draw[Sectorsegment] (a) -- +(0,2);
  \draw[Sectorsegment] (a) -- +(3,0);
  \draw[Sectorsegment] (b) -- +(-0.6,-0.6);
  \draw[Sectorsegment] (b) -- +(0,1);
  \draw[Sectorsegment] (b) -- +(3,0);
  \draw[Sectorsegment] (c) -- +(-2.6,-2.6);
  \draw[Sectorsegment] (c) -- +(0,0.4);
  \draw[Sectorsegment] (c) -- +(0.4,0);

  \draw[black,thick, fill=blue!30] ({log2(6)},{log2(3)}) -- (b) -- (a) -- (1,0) -- cycle -- (c); 

  \foreach \x in {1,...,16}
  \foreach \y in {1,...,8}
  \node[LatticePoint] at ({log2(\x)},{log2(\y)}) {};  
  
\end{tikzpicture}
  \caption{The $2$-lattice polytope over $S_{\max,+}$ arising as the $\log_2$-image of Figure~\ref{fig:trop+lat+poly+times+1}. }
  \label{subfig:trop+lat+poly+plus+1}
  \end{subfigure}
  \begin{subfigure}[h]{0.49\textwidth}
  \begin{tikzpicture}
  \Koordinatenkreuz{-0.7}{4.7}{-0.7}{3.7}{$x_1$}{$x_2$}

  \coordinate (a) at (1,1);
  \coordinate (b) at (1,{log2(6)});
  \coordinate (c) at (4,3);

  \coordinate (origin) at (0,0);


  \draw[black,thick, fill=blue!30] ({log2(12)},{log2(6)}) -- (b) -- (a) -- (2,1) -- cycle -- (c);

  \foreach \x in {1,...,16}
  \foreach \y in {1,...,8}
  \node[LatticePoint] at ({log2(\x)},{log2(\y)}) {};  
  
\end{tikzpicture}
  \caption{The same $2$-lattice polytope (tropically) dilated by $1$. }
  \end{subfigure}
  \caption{The lattice points are condensed by $\log_2$ such that the tropical dilation implies a polynomial increase in the number of contained lattice points. }
\end{figure}
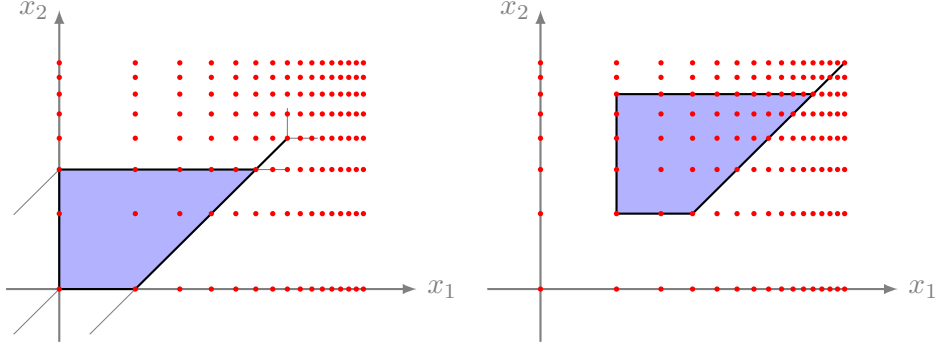

The next concept is at the heart of our quantitative studies. 

\begin{definition}
Let $P \subseteq \TT^d$ be a tropical lattice polytope and let $b \in \N_{\geq2}$.
We define the \emph{tropical lattice point enumerator} of $P$ (with respect to~$b$) as
\[
\TL_P^b(k) := \shp{ (k \odot P) \cap \bLd } , \quad k \in \Z_{\geq0}.
\]
\end{definition}

\noindent Applying the semiring isomorphism $\log_b$ to Theorem~\ref{thm:Ehrhart-max-times} we obtain

\begin{theorem}[Tropical Ehrhart polynomial]
\label{thm:tropEhrPoly}
Let $b \in \N_{\geq 2}$ and let $P \subseteq \TT^d$ be a tropical lattice polytope.
The tropical lattice point enumerator $\TL_P^b(k)$ agrees with a polynomial in $b^k$ for every $k \in \Z_{\geq0}$. 
\end{theorem}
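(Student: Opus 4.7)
The plan is to deduce the result from Theorem~\ref{thm:Ehrhart-max-times} by transporting the situation through the semiring isomorphism $\exp_b \colon \TT \to S_{\max,\cdot}$, applied coordinatewise with the convention $b^{-\infty} = 0$. The first step is to set $Q := \exp_b(P) \subseteq S_{\max,\cdot}^d$, which is a polytope over $S_{\max,\cdot}$ since $\exp_b$ is a semiring isomorphism. Because $P$ is by hypothesis a (canonical) tropical lattice polytope, its vertices lie in $\TT\N^d$, and hence the vertices of $Q$ lie in $\Z_{\geq 0}^d$.

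The key technical step is to verify that $Q$ is a \emph{lattice polytope} in the sense of Theorem~\ref{thm:Ehrhart-max-times}, i.e., that all pseudovertices of its covector decomposition lie in $\Z_{\geq 0}^d$. Since $\exp_b$ maps the covector decomposition of $P$ bijectively onto that of $Q$ (as noted after Proposition~\ref{prop:exp+img+polytrope}), this reduces to showing that the pseudovertices of $P$ themselves lie in $\TT\N^d$. This is the principal obstacle, and is a known fact for tropical polytopes with integer vertices: the coordinates of each pseudovertex of $\tconv(v^1,\ldots,v^n)$ arise as tropical combinations $\bigoplus_j \lambda_j \odot v^j$ with $\lambda_j$ drawn from integer differences of vertex entries, so integrality of all $v^j$ propagates to every pseudovertex. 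Equivalently, this integrality is reflected in the fact that, as recorded in Section~\ref{sect:alcovedtriang}, the covector decomposition of an integer-vertexed tropical polytope is built from alcoved polytopes, whose vertices are integer.

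Granting this, the rest is a direct identification. The isomorphism $\exp_b$ transports tropical dilation to classical scaling, $\exp_b(k \odot P) = b^k Q$, and bijects $\bLd$ with $\Z_{\geq 0}^d$; together with $Q \subseteq \R_{\geq 0}^d$, we obtain
\[
\TL_P^b(k) \;=\; \shp{(k \odot P) \cap \bLd} \;=\; \shp{b^k Q \cap \Z^d}.
\]
Since $b \geq 2$, the value $b^k$ is a positive integer for every $k \in \Z_{\geq 0}$, so applying Theorem~\ref{thm:Ehrhart-max-times} to $Q$ yields a polynomial $p$ of degree $\dim(Q) = \dim(P)$ with $p(b^k) = \TL_P^b(k)$, which is precisely the asserted polynomial behaviour in $b^k$.
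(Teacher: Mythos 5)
Your proof is correct and takes essentially the same route as the paper's: transport $P$ to $Q=\exp_b(P)$ over $S_{\max,\cdot}$, invoke Theorem~\ref{thm:Ehrhart-max-times}, and substitute $\ell = b^k$ using the semiring isomorphism $\log_b$. The paper simply asserts that $Q$ is a lattice polytope over $S_{\max,\cdot}$, so your additional justification via the integrality of the pseudovertices of the covector decomposition fills in a step the paper takes for granted.
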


\begin{proof}
  The set $Q = \exp_b(P)$ is a lattice polytope over $S_{\max,\cdot}$. 
  Hence, by Theorem~\ref{thm:Ehrhart-max-times}, there is a polynomial $q$ of degree $\dim(Q) = \dim(P)$ with
  \[
  q(\ell) = \shp{\ell Q \cap \Z^d}
  \]
  for all $\ell \in \Z_{>0}$.
  Substituting $b^k$ for $\ell$ and using $Q = \exp_b(P) \subseteq \R_{\geq 0}^d$ we get
  \[
  q(b^k) = \shp{\log_b\left(b^k \cdot \exp_b(P) \cap \Z_{\geq 0}^d \right)} = \shp{(k \odot P) \cap \bLd} .
  \]
  Note the use of the semiring homomorphism property of $\log_b$. 
\end{proof}

\begin{remark}
The proof above shows that the Ehrhart polynomials of $P=\tconv(M) \subseteq (S_{\max,+})^d$ and $Q=\exp_b(P) \subseteq (S_{\max,\cdot})^d$ agree up to a change of variables.
More precisely, we have $\TL_P^b(k) = q(b^k)$, for all $k \in \Z_{\geq 0}$.
\end{remark}

\begin{remark}
\label{rem:quasi-polynomials}
If one relaxes the integrality assumption in the classical setting and considers \emph{rational} polytopes $P \subseteq \R^d$, that is, polytopes all of whose vertices have only rational coordinates, then their Ehrhart function $k \mapsto \shp{kP \cap \Z^d}$ turns out to be a quasi-polynomial (cf.~\cite[Ch.~3.8]{beckrobins2007computing}).

In the various scenarios discussed above, rationality is defined as follows:
\begin{itemize}
 \item a polytope over $S_{\max,\cdot}$ is \emph{rational} if all its pseudovertices are rational,
 
 \item a polytope over $S_{\max,+}$ is \emph{tropically rational} if all its pseudovertices are integral (allowing possibly negative coordinates),
 
 \item a tropical polytope $P \subseteq \TT^d$ is \emph{tropically $b$-rational} if all its vertices are contained in $\log_b(\Q_{\geq 0})^d$.
\end{itemize}
The methods that we employed above to prove polynomiality, can similarly be used to show that in all three cases above the corresponding Ehrhart functions are quasi-polynomials as well.
\end{remark}

\begin{definition}[Tropical Ehrhart coefficients]
Let $P \subseteq \TT^d$ be a tropical lattice polytope.
We write
\[
\TL_P^b(k) = \sum_{i=0}^d c_i^b(P) (b^k)^i
\]
for its tropical Ehrhart polynomial and we call $c_i^b(P)$ the \emph{$i^{th}$ tropical Ehrhart coefficient} of~$P$.
\end{definition}

A very useful and reoccurring phenomenon in geometric combinatorics is \emph{reciprocity} (see~\cite{becksanyal2018combinatorial} for a detailed account).
For lattice point counting functions this is known as Ehrhart--MacDonald reciprocity (cf.~\cite[Ch.~4]{beckrobins2007computing}), and refers to the fact that evaluating the Ehrhart polynomial $L(P,k) = \sum_{i=0}^d c_i(P) k^i$ of a lattice polytope $P \subseteq \R^d$ at negative integers amounts to counting lattice points in the $k^{th}$ dilate of the interior $\acirc{P}$ of~$P$.
That is,
\begin{align*}
L(P,-k) &= (-1)^d L(\acirc{P},k),\quad\text{for}\quad k \in \Z_{>0}.
\end{align*}
We say that a counting function satisfying this relation \emph{fulfills reciprocity}.

If a lattice polytope over $S_{\max,\cdot}$ is pure, defined analogously for polytopes over $S_{\max,\cdot}$ as over $S_{\max,+}$, the polyhedral complex induced by its covector decomposition is a $d$-manifold and by~\cite{Macdonald:1971} reciprocity holds. 

\begin{theorem}\
\begin{enumerate}[i)]
 \item The Ehrhart polynomial in Theorem~\ref{thm:Ehrhart-max-times} of a pure lattice polytope over $S_{\max,\cdot}$ and the Ehrhart polynomial in Theorem~\ref{thm:Ehrhart-max-plus-times} of a pure canonical tropical lattice polytope over $S_{\max,+}$ fulfill reciprocity.
 
 \item The tropical Ehrhart polynomial $\TL_P^b(k)$ of a pure tropical lattice polytope $P \subseteq \TT^d$ fulfills reciprocity, in the sense that
 \[
 c_i^b(\acirc{P}) = (-1)^{d-i} c_i^b(P),\quad\text{for}\quad i \in \{0,1,\ldots,d\}.
 \]
\end{enumerate}
\end{theorem}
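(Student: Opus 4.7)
The plan is to invoke Macdonald's reciprocity theorem~\cite{Macdonald:1971} for lattice polytopal complexes whose underlying space is a $d$-manifold, a setting already highlighted in the paragraph preceding the theorem statement. The coefficient version in part~(ii) will then follow from part~(i) by pushing the argument through the semiring isomorphism $\exp_b$.

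For part~(i), let $Q$ be a pure lattice polytope over $S_{\max,\cdot}$; the argument for a pure canonical tropical lattice polytope in $\TT^d$ is structurally identical, working with the alcoved complex $\T_P$ (or the covector decomposition), all of whose cells are lattice polytopes. The covector decomposition of $Q$ is a pure $d$-dimensional lattice polytopal complex $\K$ with $|\K| = Q$; purity, together with the polytropal glueing, ensures that $|\K|$ is a $d$-manifold with boundary, and the interior $\acirc{Q}$ is the union of the relative interiors of the maximal cells. Macdonald's reciprocity then reads
\[
L(Q, -k) = (-1)^d L(\acirc{Q}, k) \quad\text{for all } k \in \Z_{>0}.
\]
Comparing coefficients in $\sum_i c_i(Q)(-k)^i = (-1)^d \sum_i c_i(\acirc{Q}) k^i$ gives $c_i(\acirc{Q}) = (-1)^{d-i} c_i(Q)$ for all $i$, which is the statement for the two Ehrhart polynomials of part~(i).

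For part~(ii), I would transfer this to the tropical $b$-lattice setting via $\exp_b$. Given a pure tropical lattice polytope $P \subseteq \TT^d$, the set $Q := \exp_b(P)$ is a pure lattice polytope over $S_{\max,\cdot}$; by Proposition~\ref{prop:exp+img+polytrope} the covector decomposition of $P$ maps cell-by-cell onto that of $Q$. The proof of Theorem~\ref{thm:tropEhrPoly} shows that the tropical Ehrhart polynomial of $P$ equals the classical Ehrhart polynomial $q$ of $Q$ under the substitution $\ell = b^k$, so coefficient-wise $c_i^b(P) = c_i(Q)$ for all $i$. Since $\exp_b$ is a homeomorphism, it sends $\acirc{P}$ onto $\acirc{Q}$, so the same argument applied to the interior yields $c_i^b(\acirc{P}) = c_i(\acirc{Q})$. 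Combining these two identifications with part~(i) gives
\[
c_i^b(\acirc{P}) = c_i(\acirc{Q}) = (-1)^{d-i} c_i(Q) = (-1)^{d-i} c_i^b(P),
\]
as claimed.

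The main obstacle is the manifold-with-boundary assertion for $|\K|$ underlying part~(i), which is what makes Macdonald's theorem applicable in the first place. This is treated as background in the paragraph preceding the theorem, but a careful verification requires showing that in the covector decomposition of a pure polytope every codimension-one cell is incident to at most two full-dimensional cells, and that the link of each interior face is a sphere; these are consequences of the polytropal local model of the decomposition. Once this structural claim is granted, the actual proof reduces to the coefficient-wise translation of Ehrhart--MacDonald reciprocity and a careful bookkeeping of how $\exp_b$ exchanges the two semirings while preserving both topological interiors and the polyhedral complex structure.
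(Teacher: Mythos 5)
Your proposal is correct and follows the same route the paper takes: the paper's (one-sentence) justification is precisely the appeal to Macdonald's reciprocity for the pure covector decomposition viewed as a $d$-manifold with boundary, and your part~(ii) transfer via $\exp_b$ and the substitution $\ell = b^k$ matches the identification $\TL_P^b(k) = q(b^k)$ recorded in the remark after Theorem~\ref{thm:tropEhrPoly}. Your closing caveat about verifying the manifold structure of the pure complex is fair, but the paper treats this at the same level of detail, so nothing is missing relative to the source.
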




\subsection{Explicit expressions for tropical Ehrhart coefficients}

In this section, we take a much more refined route to Theorem~\ref{thm:tropEhrPoly} which is based on combining the covector decomposition with tools from classical Ehrhart theory.
This allows for a refined representation of the tropical Ehrhart coefficients, and leads to our desired tropical volume concept.
For comparison to ordinary Ehrhart theory and further reading, we refer to~\cite{beckrobins2007computing}. 

In order to formulate our main technical lemma, we denote the diagonal matrix with diagonal entries $b^{a_i}$ by $D_b^a = \diag(b^{a_1},\ldots,b^{a_d}) \in \Z^{d \times d}$, for $a \in \Z_{\geq0}^d$ and $b \in \N_{\geq 2}$.
Further, let $s = (\prec_1,\prec_2,\ldots,\prec_{d+1}) \in \left\{=,\leq,<\right\}^{d+1}$.
We denote the all-one vector by $\one = (1,\ldots,1)^\intercal$.

\begin{lemma}
\label{lem:EhrhartSimplex}
For every $k \in \Z_{\geq0}$, the map $\phi : \R_{>0}^d \to \R^d$ defined by $\phi(z) = (\log_b(z_1),\dots,\log_b(z_d))^\intercal$ induces a bijection between
\[
\left( b^k D_b^a \one + (b^{k+1}-b^k) D_b^a \Delta^s(\zero) \right) \cap \Z_{\geq0}^d \quad\text{and}\quad \left( k \odot \Delta^s(a) \right) \cap \bLd .
\]
\end{lemma}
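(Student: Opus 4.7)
The plan is to treat $\phi$ as the coordinate-wise inverse of the exponential map $\psi \colon \R^d \to \R_{>0}^d$, $w \mapsto (b^{w_1}, \ldots, b^{w_d})^\intercal$, and to verify by an explicit change of variables that $\psi$ carries $k \odot \Delta^s(a)$ onto the affine set $b^k D_b^a \one + (b^{k+1}-b^k) D_b^a \Delta^s(\zero)$ in a way that preserves integrality.

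First I would unravel the tropical dilation. Since for any $S \subseteq \TT^d$ the dilation $k \odot S$ equals the classical translate $S + k\one$ (as used implicitly in the proof of Theorem~\ref{thm:tropEhrPoly}), and since $\Delta^s(a) = a + \Delta^s(\zero)$, one has $k \odot \Delta^s(a) = \Delta^s(a + k\one)$. A typical element is $w = a + k\one + u$ with $u \in \Delta^s(\zero)$, and applying $\psi$ coordinate-wise yields
\[
\psi(w)_i = b^{k + a_i} \cdot b^{u_i} = b^{k + a_i}\bigl(1 + (b-1)\, x_i\bigr), \qquad \text{where } x_i := \frac{b^{u_i} - 1}{b - 1}.
\]
This is precisely the $i$-th coordinate of $b^k D_b^a \one + (b^{k+1} - b^k) D_b^a x$, so the substitution $u \leftrightarrow x$ encodes the passage between the two sets via $\psi$.

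What remains is to check bijectivity and transfer of integrality. The map $u_i \mapsto x_i = (b^{u_i}-1)/(b-1)$ is a strictly increasing bijection of $[0,1]$ onto itself fixing both endpoints (using $b \geq 2$), and by strict monotonicity it preserves the full sign pattern $s \in \{=,\leq,<\}^{d+1}$; hence $u \in \Delta^s(\zero)$ if and only if $x \in \Delta^s(\zero)$. For integrality, the identity $b^{\phi(y)_i} = y_i$ shows that $y \in \Z_{>0}^d$ if and only if $\phi(y) \in (\log_b \Z_{>0})^d \subseteq \bLd$. Since the left-hand affine set lies inside $\R_{>0}^d$ (the $i$-th coordinate is bounded below by $b^{k+a_i} > 0$) and $\Delta^s(a + k\one)$ has only finite coordinates, the intersections with $\Z_{\geq 0}^d$ and with $\bLd$ correspond exactly under $\phi$, yielding the desired bijection. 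The only real subtlety in the whole argument is the sign-pattern bookkeeping, but strict monotonicity handles the equality, weak, and strict cases uniformly.
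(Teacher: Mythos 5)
Your proof is correct and follows essentially the same route as the paper's: both arguments rest on the strict monotonicity of $\log_b$ (resp.\ $\exp_b$) carrying the defining inequality chain of $\Delta^s(\zero)$ from one set to the other, together with the immediate transfer of integrality. The only cosmetic difference is that you work with the inverse map $\psi$ and package the sign-pattern check as the self-bijection $u_i \mapsto (b^{u_i}-1)/(b-1)$ of $\Delta^s(\zero)$, whereas the paper manipulates the chain of inequalities on $z$ directly.
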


\begin{proof}
Clearly, $\phi$ is bijective and by definition it maps points in $\Z_{\geq0}^d$ to points in $\bLd = (\log_b(\Z_{\geq0}))^d$.
So what we need to check is that $z \in b^k D_b^a \one + (b^{k+1}-b^k) D_b^a \Delta^s(\zero)$ if and only if $\phi(z) \in \left( k \odot \Delta^s(a) \right) = k \one + a + \Delta^s(\zero)$.
As we saw above, the inequality description of the simplex $\Delta^s(\zero)$ is given by
\[
\Delta^s(\zero) = \left\{ x \in \R^d : 0 \prec_{d+1} x_d \prec_d x_{d-1} \prec_{d-1} \ldots \prec_2 x_1 \prec_1 1 \right\}.
\]
Therefore, $z \in b^k D_b^a \one + (b^{k+1}-b^k) D_b^a \Delta^s(\zero)$ if and only if
\begin{align*}
& \quad 0 \prec_{d+1} \frac{z_d}{b^{a_d}} - b^k \prec_d \frac{z_{d-1}}{b^{a_{d-1}}} - b^k \prec_{d-1} \ldots \prec_2 \frac{z_1}{b^{a_1}} - b^k \prec_1 b^{k+1}-b^k \\
\Longleftrightarrow & \quad b^k \prec_{d+1} \frac{z_d}{b^{a_d}} \prec_d \frac{z_{d-1}}{b^{a_{d-1}}} \prec_{d-1} \ldots \prec_2 \frac{z_1}{b^{a_1}} \prec_1 b^{k+1} \\
\Longleftrightarrow & \quad k \prec_{d+1} \log_b(z_d) - a_d \prec_d \ldots \prec_2 \log_b(z_1) - a_1 \prec_1 k+1,
\end{align*}
which holds if and only if $\phi(z) = (\log_b(z_1),\ldots,\log_b(z_d))^\intercal \in k \one + a + \Delta^s(\zero)$.
Here we also used that the logarithm $x \mapsto \log_b(x)$ is strictly increasing.
\end{proof}

\begin{example}
  The alcoved simplex $\conv \begin{pmatrix} 3 & 4 & 4 \\ 5 & 5 & 6 \end{pmatrix} = 2 \odot \conv \begin{pmatrix} 1 & 2 & 2 \\ 3 & 3 & 4 \end{pmatrix}$ maps to $7^2 \cdot \conv \begin{pmatrix} 7^1 & 7^2 & 7^2 \\ 7^3 & 7^3 & 7^4 \end{pmatrix} = 7^2 \cdot \left( \begin{pmatrix} 7^1 \\ 7^3 \end{pmatrix} + \conv \begin{pmatrix} 0 & 6\cdot 7 & 6 \cdot 7 \\ 0 & 0 & 6\cdot 7^3 \end{pmatrix} \right) = 7^2 \cdot \left( D_{7}^{(1,3)} \one + 6 \cdot D_{7}^{(1,3)} \cdot \conv \begin{pmatrix} 0 & 1 & 1 \\ 0 & 0 & 1 \end{pmatrix} \right)$ via $\exp_7$. 
\end{example}

Note that the proof of Lemma~\ref{lem:EhrhartSimplex} suggests that the tropical Ehrhart polynomial is close to a weighted version of the usual Ehrhart polynomial with weight function $z \mapsto b^z$.
Weighted Ehrhart polynomials have been studied, for instance, by Baldoni et al.~\cite{baldonietal2012computation} (they use polynomial weight functions but also discuss exponential weights).

\begin{example}
\label{ex:PickDelta}
By Pick's Theorem (cf.~\cite[Ch.~2.6]{beckrobins2007computing}) the Ehrhart function of a lattice polygon~$P \subseteq \R^2$ is given by
\[
 \shp{ kP \cap \Z^2 } = \vol(P) k^2 + \tfrac12 \shp{ \partial P \cap \Z^2 } k + 1. 
\]
Since $\vol(D_b^a \Delta(\zero)) = \tfrac12 b^{a_1+a_2}$ and $\shp{\partial D_b^a \Delta(\zero) \cap \Z^2} = b^{a_1} + b^{a_2} + b^{\min(a_1,a_2)}$, we use Lemma~\ref{lem:EhrhartSimplex} and we get the tropical Ehrhart polynomial of $\Delta(a)$, for each $a \in \Z_{\geq0}^2$:
\begin{align*}
\TL_{\Delta(a)}^b(k) &= \tfrac12 b^{a_1+a_2} (b^{k+1} - b^k)^2 + \tfrac12 (b^{a_1} + b^{a_2} + b^{\min(a_1,a_2)}) (b^{k+1} - b^k) + 1 \\
&= \tfrac12 (b-1)^2 b^{a_1+a_2} (b^k)^2 + \tfrac12 (b-1) (b^{a_1} + b^{a_2} + b^{\min(a_1,a_2)}) b^k + 1.
\end{align*}
\end{example}

The following is our desired precise version of Theorem~\ref{thm:tropEhrPoly}, building on the structure of the covector decomposition discussed in Section~\ref{sect:alcovedtriang}.
It expresses the tropical Ehrhart coefficients as signed and weighted sums of the classical Ehrhart coefficients of diagonally transformed alcoved simplices.

\begin{theorem}
\label{thm:tropEhrCoeffsRep}
The $i^{th}$ tropical Ehrhart coefficient of the tropical lattice polytope $P \subseteq \TT^d$ is given by
\begin{align}
c_i^b(P) &= \sum_{\substack{\Delta_\pi^s(a) \in \T_P\\ m=\dim(\Delta_\pi^s(a)) \geq i}} (-1)^{m-i} (b-1)^i c_i(D_b^a \widebar{\Delta^s_\pi(\zero)}),\label{eqn:cib-representation}
\end{align}
where $\overline{Q}$ denotes the closure of~$Q \subseteq \R^d$.
\end{theorem}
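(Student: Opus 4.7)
The plan is to use the alcoved triangulation $\T_P$ to decompose $P$ into the disjoint union of relative interiors of its faces, count tropical lattice points in each piece via Lemma~\ref{lem:EhrhartSimplex}, invoke classical Ehrhart--MacDonald reciprocity, and finally collect the coefficient of $(b^k)^i$. By the uniqueness of the Ehrhart representation afforded by Theorem~\ref{thm:tropEhrPoly}, matching coefficients will then deliver the claimed formula.

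I would first write $P = \bigsqcup \Delta_\pi^s(a)$ with $\Delta_\pi^s(a)$ ranging over the relatively open cells of $\T_P$: for each closed face, the symbol string $s$ carries $=$ at the positions of the defining equalities and $<$ elsewhere, so that $\Delta_\pi^s(a)$ is the relative interior and $m = \dim \Delta_\pi^s(a)$ is one less than the number of strict positions in $s$. Since tropical dilation is classical translation by $k\one$, this yields
\[
\TL_P^b(k) = \sum_{\Delta_\pi^s(a) \in \T_P} \shp{(k \odot \Delta_\pi^s(a)) \cap \bLd}.
\]
Extending Lemma~\ref{lem:EhrhartSimplex} from $\pi = \mathrm{id}$ to arbitrary $\pi \in S_d$ by permuting coordinates — the argument carries over verbatim — each summand becomes the number of integer points of $b^k D_b^a \one + (b-1) b^k \, D_b^a \Delta_\pi^s(\zero)$ in $\Z_{\geq 0}^d$. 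Because $b^k D_b^a \one \in \Z^d$ and the shifted set lies in $\R_{\geq 0}^d$, this count equals the interior Ehrhart polynomial $L\bigl(\relint\bigl(D_b^a \widebar{\Delta^s_\pi(\zero)}\bigr), (b-1)b^k\bigr)$ of the $m$-dimensional closed lattice polytope $D_b^a \widebar{\Delta^s_\pi(\zero)}$.

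The next step is Ehrhart--MacDonald reciprocity, which for a closed lattice polytope $Q$ of dimension $m$ reads
\[
L(\relint Q, t) = \sum_{i=0}^{m} (-1)^{m-i} c_i(Q) \, t^i.
\]
Substituting $t = (b-1)b^k$ and extracting the factor $(b-1)^i (b^k)^i$ turns each summand into $\sum_{i=0}^m (-1)^{m-i}(b-1)^i c_i\bigl(D_b^a \widebar{\Delta^s_\pi(\zero)}\bigr)(b^k)^i$. Swapping the orders of summation exhibits $\TL_P^b(k)$ as a polynomial in $b^k$ whose $(b^k)^i$-coefficient is exactly the right-hand side of~\eqref{eqn:cib-representation}; by the uniqueness of the tropical Ehrhart polynomial, this coefficient must equal $c_i^b(P)$.

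The main obstacle is the half-open bookkeeping. One has to verify that the relative-interior types $s$ attached to the faces of $\T_P$ indeed yield a disjoint partition of $P$, and that Lemma~\ref{lem:EhrhartSimplex} applies both to symbol strings containing equalities (which encode the lower-dimensional strata) and to arbitrary permutations $\pi$, with the corresponding faces $D_b^a \widebar{\Delta^s_\pi(\zero)}$ being honest lattice polytopes in their affine spans. Once this combinatorial-geometric setup is in place, the algebraic manipulation and the invocation of reciprocity are routine.
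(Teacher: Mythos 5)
Your proposal is correct and follows essentially the same route as the paper's proof: partition $P$ into the relatively open alcoved simplices of $\T_P$, apply Lemma~\ref{lem:EhrhartSimplex} (extended to arbitrary $\pi$) to convert each piece into a classical lattice-point count in the $(b-1)b^k$-dilate of $D_b^a\Delta_\pi^s(\zero)$, invoke Ehrhart--MacDonald reciprocity for the open simplices, and compare coefficients of $(b^k)^i$. The half-open bookkeeping you flag is handled in the paper exactly as you describe, by taking $s \in \{=,<\}^{d+1}$ so that the pieces form a partition.
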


\begin{proof}
Every element of the alcoved triangulation $\T_P$ of~$P$, as discussed in Section~\ref{sect:alcovedtriang}, is of the form $\Delta_\pi^s(a)$, for some $s \in \{=,\leq,<\}^{d+1}$ and $a \in \Z_{\geq0}$.
Moreover we think of these alcoved simplices as being relatively open, that is, $s \in \{=,<\}^{d+1}$, since this yields a partition of~$P$ into these pieces.

Therefore, the tropical lattice point enumerator $\TL_P^b(k) = \shp{ (k \odot P) \cap \bLd }$ is the sum of the functions $\TL_{\Delta^s_\pi(a)}^b(k)$.
By Lemma~\ref{lem:EhrhartSimplex}, we have
\[
\TL_{\Delta^s_\pi(a)}^b(k) = \shp{ \left( k \odot \Delta^s_\pi(a) \right) \cap \bLd } = \shp{ \left( (b^{k+1}-b^k) D_b^a \Delta^s_\pi(\zero) \right) \cap \Z^d }.
\]
Now, $D_b^a \Delta^s_\pi(\zero)$ is a relatively open simplex all of whose vertices lie in $\Z^d$ and whose dimension is
\[
m = \#\{i : s_i = \text{`} < \text{'}\} - 1.
\]
Classical Ehrhart Theory on the standard lattice~$\Z^d$ (cf.~\cite[Ch.~3]{beckrobins2007computing}) implies that $\TL_{\Delta^s_\pi(a)}^b(k)$ agrees with a polynomial in $b^{k+1}-b^k$
of degree~$m$, whose coefficients depend on $\pi, a, s$, and $b$, but not on~$k$.
Thus, $\TL_{\Delta^s_\pi(a)}^b(k)$ agrees with a polynomial in $b^k$ for every $k \in \Z_{\geq0}$.
We conclude by observing that $\TL_P^b(k)$ as a sum of polynomials, is a polynomial in~$b^k$ as well.

In order to derive the stated formula for the $i^{th}$ tropical Ehrhart coefficient of~$P$, we write
\[
\shp{t D_b^a \widebar{\Delta_\pi^s(\zero)} \cap \Z^d} = \sum_{i=0}^m c_i(D_b^a \widebar{\Delta_\pi^s(\zero)}) t^i,
\]
and we observe that by Ehrhart reciprocity~\cite[Thm.~4.1]{beckrobins2007computing} we get
\begin{align*}
\shp{t D_b^a \Delta_\pi^s(\zero) \cap \Z^d} &= (-1)^m \shp{(-t) D_b^a \widebar{\Delta_\pi^s(\zero)} \cap \Z^d}\\
&= \sum_{i=0}^m (-1)^{m-i} c_i(D_b^a \widebar{\Delta_\pi^s(\zero)}) t^i.
\end{align*}
Substituting $t = (b-1)b^k$ and summing over all at least $i$-dimensional elements in $\T_P$ as described above finishes the proof.
\end{proof}

\subsection{First properties of tropical Ehrhart coefficients}

Here, we record two properties of tropical Ehrhart coefficients that go well in line with their classical counterparts.
We write $\P_{\TT,\L}^d$ for the family of tropical lattice polytopes in~$\TT^d$.

\goodbreak

\begin{proposition}
\label{prop:tropEhrCoeffs-props}
Let $P \in \P_{\TT,\L}^d$ and let $i \in \{0,1,\ldots,d\}$.
\begin{enumerate}[i)]
 \item \emph{(Homogeneity):} For every $\lambda \in \TT$, we have
 \[
 c_i^b(\lambda \odot P) = (b^{\lambda i}) \cdot c_i^b(P).
 \]

 \item \emph{(Valuation):} For every $b \in \N_{\geq2}$, the function $c_i^b(\cdot) : \P_{\TT,\L}^d \to \R$ is a \emph{valuation}, that is,
  \[c_i^b(P \cup Q) + c_i^b(P \cap Q) = c_i^b(P) + c_i^b(Q),\]
  for all $P,Q \in \P_{\TT,\L}^d$ such that $P \cup Q, P \cap Q \in \P_{\TT,\L}^d$.
\end{enumerate}
\end{proposition}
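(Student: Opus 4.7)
Both identities are to be derived from the corresponding identities already holding at the level of the tropical lattice point enumerator $\TL_P^b$ before passing to the coefficients of its polynomial representation. The underlying uniqueness tool is the observation that, because $\{b^k : k \in \Z_{\geq 0}\}$ is an infinite subset of $\R$, a polynomial in $b^k$ of bounded degree is determined by its values at $k \in \Z_{\geq 0}$; hence equality of the enumerators as functions of $k$ forces equality of all the coefficients $c_i^b(\cdot)$.

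For Homogeneity, the point is associativity of tropical scalar multiplication: $k \odot (\lambda \odot P) = (k + \lambda) \odot P$. For $\lambda \in \TT\N$ (so that $k+\lambda \in \Z_{\geq 0}$ and $\lambda \odot P$ is again a tropical lattice polytope) this yields
\[
\TL_{\lambda \odot P}^b(k) \;=\; \TL_P^b(k+\lambda) \;=\; \sum_{i=0}^d c_i^b(P)\, (b^{k+\lambda})^i \;=\; \sum_{i=0}^d b^{\lambda i}\, c_i^b(P)\, (b^k)^i,
\]
and comparing with $\TL_{\lambda \odot P}^b(k) = \sum_i c_i^b(\lambda \odot P)(b^k)^i$ gives the desired identity. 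For the broader case $\lambda \in \Gamma_b$ one works in the $(\max,\cdot)$-picture instead: letting $q$ be the polynomial associated with $Q = \exp_b(P)$ in Theorem~\ref{thm:Ehrhart-max-times}, the integer $b^\lambda$ is a valid dilation factor, so $\TL_{\lambda \odot P}^b(k) = q(b^k \cdot b^\lambda)$, and expanding in $b^k$ recovers the same formula. (For entirely arbitrary $\lambda \in \TT$ one falls back on the quasi-polynomial version described in Remark~\ref{rem:quasi-polynomials}.)

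For the Valuation property, the crucial remark is that $k \odot (\cdot)$ acts on $\R^d$ as the classical translation $x \mapsto x + k \one$, hence is a bijection that commutes with unions and intersections:
\[
k \odot (P \cup Q) = (k \odot P) \cup (k \odot Q), \qquad k \odot (P \cap Q) = (k \odot P) \cap (k \odot Q).
\]
Applying ordinary inclusion–exclusion for cardinalities of subsets of $\bLd$ then gives
\[
\TL_{P \cup Q}^b(k) + \TL_{P \cap Q}^b(k) \;=\; \TL_P^b(k) + \TL_Q^b(k) \qquad \text{for all } k \in \Z_{\geq 0}.
\]
Since the hypothesis ensures that all four tropical polytopes are tropical lattice polytopes, each side is a polynomial in $b^k$ of degree at most $d$. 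Matching coefficients of $(b^k)^i$ yields the claimed valuation identity.

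The main obstacle is essentially bookkeeping rather than anything substantive: one has to check that the hypotheses guarantee $\lambda \odot P$, $P \cup Q$, and $P \cap Q$ really are tropical lattice polytopes (so that their Ehrhart expansions are legitimate), and then be a bit careful in part (i) about which class of scalars $\lambda$ one is allowing, as indicated above. Once these bookkeeping points are settled, the identities themselves are immediate consequences of associativity of $\odot$ and of the classical inclusion–exclusion principle.
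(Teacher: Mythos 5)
Your proof of part (ii) is exactly the paper's: the counting function $P \mapsto \shp{(k\odot P)\cap\bLd}$ is a valuation by inclusion--exclusion (your explicit remark that $k\odot(\cdot)$ is a translation commuting with $\cup$ and $\cap$ is the reason this works), and one compares coefficients of the resulting polynomial identity in $b^k$. For part (i), however, you take a genuinely different route. The paper proves homogeneity from the explicit combinatorial representation of $c_i^b$ in Theorem~\ref{thm:tropEhrCoeffsRep}: it matches the alcoved simplices $\Delta_\pi^s(a)\in\T_P$ with $\Delta_\pi^s(a+\lambda\one)\in\T_{\lambda\odot P}$, notes $D_b^{a+\lambda\one}=b^\lambda D_b^a$, and invokes degree-$i$ homogeneity of the classical Ehrhart coefficients. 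You instead use the functional equation $\TL^b_{\lambda\odot P}(k)=\TL^b_P(k+\lambda)$ (equivalently, dilation by the integer $b^\lambda$ in the $(\max,\cdot)$-picture) and again compare coefficients of polynomials in $b^k$. Your argument is more elementary in that it bypasses the alcoved-triangulation machinery entirely and only needs Theorem~\ref{thm:tropEhrPoly}; the paper's argument buys the formula as a one-line corollary of the representation~\eqref{eqn:cib-representation} it has already established. Both arguments carry the same implicit restriction that $\lambda$ be such that $\lambda\odot P$ is again a tropical lattice polytope and that the relevant dilation factor is a positive integer (e.g.\ $\lambda\in\Gamma_b$ with $b^\lambda\in\Z_{>0}$); you are in fact more explicit about this caveat than the paper, whose blanket ``for every $\lambda\in\TT$'' is loose. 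So: correct, with (i) done by a different and slightly cleaner mechanism.
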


\begin{proof}
\romannumeral1): We use the representation of $c_i^b(P)$ provided in~\eqref{eqn:cib-representation}.
With the notation established around this identity, we observe that $\Delta_\pi^s(a) \subseteq P$ if and only if $\Delta_\pi^s(a+\lambda\one) \subseteq \lambda \odot P$.
Thus
\begin{align*}
c_i^b(\lambda \odot P) &= \sum_{\substack{\Delta_\pi^s(a+\lambda\one) \in \T_{\lambda\odot P}\\ m=\dim(\Delta_\pi^s(a+\lambda\one)) \geq i}} \!\!\!(-1)^{m-i}(b-1)^i c_i(D_b^{a+\lambda\one}\widebar{\Delta^s_\pi(\zero)})\\
&= \sum_{\substack{\Delta_\pi^s(a) \in \T_P\\ m=\dim(\Delta_\pi^s(a)) \geq i}} \!\!\!(-1)^{m-i}(b-1)^i c_i(b^\lambda \cdot D_b^a \widebar{\Delta^s_\pi(\zero)})\\
&= \sum_{\substack{\Delta_\pi^s(a) \in \T_P\\ m=\dim(\Delta_\pi^s(a)) \geq i}} \!\!\!(b^\lambda)^i (-1)^{m-i} (b-1)^i c_i(D_b^a \widebar{\Delta^s_\pi(\zero)}) = (b^{\lambda i}) \cdot c_i^b(P),
\end{align*}
because the classical $i^{th}$ Ehrhart coefficient $c_i(\cdot)$ is homogeneous of degree~$i$.

\romannumeral2): Clearly, the counting function $P \mapsto \shp{(k \odot P) \cap \Gamma_b^d}$ is a valuation, for every fixed $k \in \Z_{\geq0}$.
Therefore,
\[
\TL_{P \cup Q}^b(k) + \TL_{P \cap Q}^b(k) = \TL_{P}^b(k) + \TL_{Q}^b(k),
\]
and since every involved summand is a polynomial in $b^k$ of degree~$d$, the claim follows by comparing coefficients.
\end{proof}

\begin{example}
\label{ex:triangle+edge}
For $\ell \in \Z_{>0}$ and $k \in \Z_{\geq0}$, let $M = \left( \begin{array}{ccc} \ell-1 & \ell & k+\ell \\ 0 & 0 & k+1 \end{array} \right)$ and consider the tropical lattice polygon $P = \tconv(M)$ (see Figure~\ref{fig:differing+lower+vols}).
We aim to compute its first tropical Ehrhart coefficient $c_1^b(P)$.

Note, that~$P$ decomposes into the alcoved triangle $T=\Delta((\ell-1,0)^\intercal)$ and the segment $S=[(\ell,1)^\intercal,(k+\ell,k+1)^\intercal]$, which itself is decomposed into the alcoved segments $S_j = (\ell+j-1,j)^\intercal + [\zero,\one]$, for $1 \leq j \leq k$.
Hence, by the valuation property in Proposition~\ref{prop:tropEhrCoeffs-props}, and the fact that the occurring intersections are zero-dimensional, we get by Lemma~\ref{lem:EhrhartSimplex} and Example~\ref{ex:PickDelta}
\begin{align*}
c_1^b(P) &= c_1^b(T) + c_1^b(S_1) + \ldots + c_1^b(S_k)\\
&= \frac12(b-1)(b^{\ell-1}+2) + (b-1)(b+\ldots+b^k).
\end{align*}
\end{example}

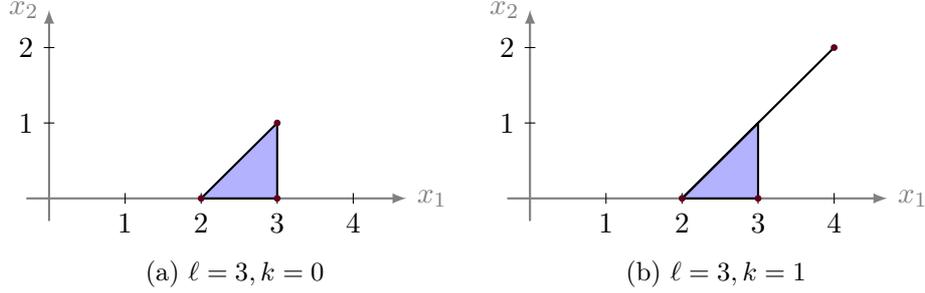
\begin{figure}[ht]
  \begin{subfigure}[h]{0.49\textwidth}
  \begin{tikzpicture}

  \Koordinatenkreuz{-0.3}{4.7}{-0.3}{2.5}{$x_1$}{$x_2$}

  \foreach \x in {1,...,4}
    \draw[shift={(\x,0)}] (0pt,2pt) -- (0pt,-2pt) node[below] {$\x$};

    \foreach \y in {1,2}
    \draw[shift={(0,\y)}] (2pt,0pt) -- (-2pt,0pt) node[left] {$\y$};

  
  \coordinate (a) at (2,0);
  \coordinate (b) at (3,0);
  \coordinate (c) at (3,1);


    \draw[black,thick, fill=blue!30] (2,0) -- (3,0) -- (3,1) -- cycle;

  \node[Endpoint] at (a){};
  \node[Endpoint] at (b){};
  \node[Endpoint] at (c){};

  \end{tikzpicture}
  \caption{$\ell = 3, k = 0$}
  \end{subfigure}
  \begin{subfigure}[h]{0.49\textwidth}
  \begin{tikzpicture}

  \Koordinatenkreuz{-0.3}{4.7}{-0.3}{2.5}{$x_1$}{$x_2$}

  \foreach \x in {1,...,4}
    \draw[shift={(\x,0)}] (0pt,2pt) -- (0pt,-2pt) node[below] {$\x$};

    \foreach \y in {1,2}
    \draw[shift={(0,\y)}] (2pt,0pt) -- (-2pt,0pt) node[left] {$\y$};

  
  \coordinate (a) at (2,0);
  \coordinate (b) at (3,0);
  \coordinate (c) at (4,2);

  \coordinate (help) at (3,1);

    \draw[black,thick, fill=blue!30] (a) -- (b) -- (help) -- cycle -- (c);

  \node[Endpoint] at (a){};
  \node[Endpoint] at (b){};
  \node[Endpoint] at (c){};

  \end{tikzpicture}
  \caption{$\ell = 3, k = 1$}
  \end{subfigure}
  \caption{The tropical lattice polygons in Example~\ref{ex:triangle+edge}.}
  \label{fig:differing+lower+vols}
\end{figure}

\section{Tropical Volume from Tropical Lattice Points}
\label{sec:trop+vol+lattice+points}

\subsection{A novel concept of tropical volume}

Based on the polynomiality of the counting function $k \mapsto \TL_P^b(k)$ established in Section~\ref{sec:tropEhrpolys}, we can now easily build up an analogy to the classical setting and define a novel volume concept for tropical (lattice) polytopes:
If $P \subseteq \R^d$ is a classical lattice polytope, that is, with respect to $(+,\cdot)$, and with Ehrhart polynomial $\shp{ k P \cap \Z^d } = \sum_{i=0}^d c_i(P) k^i$, then by properties of the Lebesgue-measure one obtains
\[
\vol(P) = \lim_{k \to \infty} \frac{\shp{ k P \cap \Z^d }}{k^d} = c_d(P).
\]
Note that $k^d = \shp{ k \cdot [0,1)^d \cap \Z^d }$, that is, $k^d$ is the number of lattice points in the~$k^{th}$ dilate of the standard fundamental cell of $\Z^d$.
The tropicalization of this statement is given by
\[
\shp{ \left(k \odot [-\infty,0)^d\right) \cap \bLd } = \shp{[-\infty,k)^d \cap \bLd } = (b^k)^d.
\]
Thus writing $\TL_P^b(k) = \sum_{i=0}^d c_i^b(P) (b^k)^i$ for a \emph{tropical} lattice polytope~$P \subseteq \TT^d$, we obtain
\[
c_d^b(P) = \lim_{k \to \infty} \frac{\shp{(k \odot P) \cap \bLd }}{(b^k)^d}.
\]
This observation motivates the following definition.

\begin{definition}[Tropical $b$-volume]
Let $P \subseteq \TT^d$ be a tropical lattice polytope and let $b \in \N_{\geq 2}$.
Define the \emph{tropical $b$-volume} $\tlvol^b(P)$ of $P$ as the leading coefficient $c_d^b(P)$ of its tropical Ehrhart polynomial $\TL_P^b(k)$.\end{definition}

In view of Theorem~\ref{thm:Ehrhart-max-times}, we get that $\tlvol^b(P) = \vol(\exp_b(P))$.

\begin{example}
\label{ex:trop-b-volume}
Consider the translated standard alcoved simplex $\Delta(a) \subseteq \TT^d$, where $a \in \Z_{\geq0}^d$.
In view of~\eqref{eqn:cib-representation}, its tropical $b$-volume equals
\[
\tlvol^b(\Delta(a)) = c_d^b(\Delta(a)) = (b-1)^d c_d(D_b^a \Delta(\zero)) = \tfrac1{d!} (b-1)^d b^{a_1+\ldots+a_d}.
\]
\end{example}

\noindent In view of this example, we see that the tropical $b$-volume of a tropical lattice polytope~$P$ equals the sum of $\tlvol^b(\Delta_\pi(a)) = \tfrac1{d!} (b-1)^d b^{a_1+\ldots+a_d}$, where $\Delta_\pi(a) \in \T_P$, for some $a \in \Z^d$ and some permutation $\pi \in S_d$.

As a consequence $\tlvol^b(P)$ is a polynomial, seen as a function of $b \in \N_{\geq2}$. Hence, we can apply the \emph{logarithm-map}
\[
\Log{f} := \lim_{b \to \infty} \log_b \,\card{f(b)}
\]
to $\tlvol^b(P)$ and arrive at a tropical volume concept for~$P$ which is independent of any additional parameter.
Note that $\Log{f}$ does not exist for all functions $f:\R \to \R$, however, we only apply it to the rational functions~$c_d^b(P)$ and $c_{d-1}^b(P)$ (cf.~Lemma~\ref{lem:second+highest+trop+coeff}).

\begin{definition}[Tropical barycentric volume] \label{def:tropical+volume+from+lattice}
Let $P \subseteq \TT^d$ be a tropical lattice polytope.
We define its \emph{tropical barycentric volume} by
\[
\tlvol(P) := \max\{ a_1+\ldots+a_d + d : a \in \Z^d\text{ such that } \Delta_\pi(a) \in \T_P \}.
\]
\end{definition}

\noindent Note that the alcoved simplices $\Delta_\pi(a)$ appearing in this definition are full-dimensional.
The tropical barycentric volume can be interpreted in terms of the $d$-trunk from Definition~\ref{def:i+trunk}.

\begin{proposition} \label{prop:tropical+volume+maximal+sum}
  The tropical barycentric volume of a tropical lattice polytope~$P$ equals
\[
\tlvol(P) = \max_{x \in \Tr_d(P)} \sum_{i=1}^d x_i . 
\]
\end{proposition}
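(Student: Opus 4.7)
The proposition asserts that the maximum of the linear functional $x \mapsto \sum_i x_i$ over $\Tr_d(P)$ equals the maximum of $a_1+\dots+a_d+d$ over all full-dimensional alcoved simplices $\Delta_\pi(a) \in \T_P$. I would prove the two inequalities separately, relying on the fact that $\Tr_d(P)$ is precisely the union of the closures of the $d$-dimensional polytropes in the covector decomposition of $P$, and that each such lattice polytrope is subdivided by the alcoved simplices of $\T_P$.

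For the inequality $\tlvol(P) \leq \max_{x \in \Tr_d(P)} \sum_i x_i$, I would pick any $\Delta_\pi(a) \in \T_P$ realizing the maximum in Definition~\ref{def:tropical+volume+from+lattice}. Since $\pi \in S_d$ is a permutation, setting $\ell = d$ in the vertex description
\[
\Delta_\pi(a) = \conv\{ a + e_{\pi(1)} + \dots + e_{\pi(\ell)} : \ell = 0,1,\dots,d\}
\]
shows that $a + \one$ is a vertex of $\Delta_\pi(a)$ with coordinate sum $a_1+\dots+a_d+d$. As $\Delta_\pi(a)$ is full-dimensional, it is contained in a $d$-dimensional polytrope of $\F_P$, hence $a + \one \in \Tr_d(P)$, which yields the desired inequality.

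For the reverse direction, take any $x \in \Tr_d(P)$. By the description of $\Tr_d(P)$ as the union of closures of the $d$-dimensional cells of $\F_P$, the point $x$ lies in the closure of some full-dimensional polytrope $Q$ of the covector decomposition. Since $P$ is a tropical lattice polytope, $Q$ is an alcoved lattice polytope and so it is triangulated by alcoved simplices $\Delta_\pi(a) \in \T_P$. Thus $x \in \overline{\Delta_\pi(a)}$ for at least one such simplex. The linear function $x \mapsto \sum_i x_i$ attains its maximum over a polytope at a vertex, and among the vertices $a + e_{\pi(1)} + \dots + e_{\pi(\ell)}$ ($\ell = 0,\dots,d$) of $\Delta_\pi(a)$, the maximum coordinate sum is $a_1+\dots+a_d+d$, attained at $\ell = d$. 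Hence $\sum_i x_i \leq a_1+\dots+a_d+d \leq \tlvol(P)$.

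The argument is largely bookkeeping; the only subtlety is matching the definition of $\Tr_d(P)$ in Definition~\ref{def:i+trunk}, which is stated as a union over relatively open cells $F \in \F_P$, with the description as the union of closures of the $d$-dimensional cells needed above. This identification is essentially the content of the remark following Definition~\ref{def:i+trunk} (``$\Tr_d(P)$ is obtained by removing all $(d-1)$-tentacle elements''), so I expect the main obstacle here is merely to state this carefully rather than to prove anything deep.
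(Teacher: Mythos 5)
Your proposal is correct and follows essentially the same route as the paper's proof: identify $\Tr_d(P)$ with the union of the (closures of the) full-dimensional alcoved simplices $\Delta_\pi(a) \in \T_P$ and observe that the linear functional $\one^\intercal x$ is maximized on each such simplex at the vertex $a+\one$, with value $a_1+\cdots+a_d+d$. The paper compresses this into three sentences, whereas you spell out both inequalities and flag the open-versus-closed-cells bookkeeping explicitly; there is no substantive difference.
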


\begin{proof}
Let $P \subseteq \TT^d$ be a tropical lattice polytope.
The $d$-trunk of $P$ is the union of all full-dimensional alcoved simplices $\Delta_\pi(a) \in \T_P$.
The maximal point with respect to the linear functional $\one$ of such a simplex $\Delta_\pi(a)$ is given by $a + \one$, so its coordinate sum equals $a_1+\ldots+a_d+d$.
The claim follows by observing that the maximal point of~$\Tr_d(P)$ is the maximal point of a suitable simplex~$\Delta_\pi(a)$.
\end{proof}

This interpretation allows us to extend the definition to tropical polytopes whose vertices are not necessarily lattice points. 

\begin{definition} \label{def:tropical+volume}
  The \emph{tropical barycentric volume} $\tlvol(P)$ of a tropical polytope $P \subseteq \TT^d$ is defined as $\max_{x \in \Tr_d(P)} \sum_{i=1}^d x_i$.
\end{definition}

In the following we use the compact and more convenient notation $\one^\intercal x = \sum_{i=1}^d x_i$.
The tropical barycentric volume has a particularly nice form, if the tropical polytope is pure.
For this, we need the notion of the \emph{tropical barycenter}, which is the componentwise maximal point of a tropical polytope.
This point exists and it is moreover unique due to the definition of tropical convex combinations.
Proposition~\ref{prop:dtrunk+tropical+convex} implies the following.

\begin{corollary}
  The tropical barycentric volume is the sum of the coordinates of the barycenter of its $d$-trunk.
  In particular, the tropical barycentric volume of a pure tropical polytope is the sum of the coordinates of its barycenter.
\end{corollary}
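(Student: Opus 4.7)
The plan is to leverage Proposition~\ref{prop:dtrunk+tropical+convex}, which establishes that $\Tr_d(P)$ is itself a tropical polytope, and then exploit the fact that on any tropical polytope the linear functional $\one^\intercal x$ is maximized precisely at the tropical barycenter.

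First, I would recall from Definition~\ref{def:tropical+volume} that $\tlvol(P) = \max_{x \in \Tr_d(P)} \one^\intercal x$. By Proposition~\ref{prop:dtrunk+tropical+convex} the set $\Tr_d(P)$ is a tropical polytope in $\TT^d$, so it makes sense to speak of its tropical barycenter $m = (m_1,\ldots,m_d)^\intercal$, defined as the componentwise maximum over all points in~$\Tr_d(P)$. I would invoke the existence and uniqueness of~$m$ by noting that the componentwise maximum of any finite collection of points in a tropical polytope is again in that polytope (this is exactly the tropical convex combination with all $\lambda_j = 0$), and then passing to a limit using the closedness of~$\Tr_d(P)$ as a union of closed polytropes.

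Next, I would argue that $m$ realizes the maximum of the linear functional $\one^\intercal x$ on $\Tr_d(P)$: by definition $x_i \leq m_i$ coordinatewise for every $x \in \Tr_d(P)$, hence $\one^\intercal x \leq \one^\intercal m$. Combined with $m \in \Tr_d(P)$, this yields $\tlvol(P) = \one^\intercal m$, proving the first assertion.

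For the second assertion, I would observe that a pure tropical polytope~$P$ satisfies $P = \Tr_d(P)$ by definition (the $d$-trunk is obtained from~$P$ by removing lower-dimensional tentacle elements, and purity means there are none). Consequently the tropical barycenter of~$\Tr_d(P)$ coincides with the tropical barycenter of~$P$, and the formula $\tlvol(P) = \one^\intercal m$ specializes as claimed. The only mildly subtle point is confirming that the componentwise maximum of the (possibly infinite) set $\Tr_d(P)$ is actually attained inside $\Tr_d(P)$; this is immediate from Proposition~\ref{prop:dtrunk+tropical+convex} once we note that the tropical convex hull of a finite set of pseudovertices of~$\Tr_d(P)$ already contains the componentwise maximum of those pseudovertices, which in turn majorizes every point of~$\Tr_d(P)$.
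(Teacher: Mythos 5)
Your proposal is correct and follows exactly the route the paper intends: the paper derives this corollary directly from Proposition~\ref{prop:dtrunk+tropical+convex} together with the observation (stated just before the corollary) that the tropical barycenter of a tropical polytope is its componentwise maximal point, which exists as the tropical convex combination of the generators with all coefficients equal to $0$ and clearly maximizes $\one^\intercal x$. Your handling of the pure case via $P = \Tr_d(P)$ likewise matches the paper.
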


This observation also explains our choice to call $\tlvol(\cdot)$ the tropical \emph{barycentric} volume.

\begin{example}
  The tropical unit cube in $\TT^d$ is given as the Cartesian product $[-\infty,0]^d$.
  It can be written as the tropical convex hull of the points $(-\infty,\ldots,-\infty)^\intercal$, $(-\infty,0,\ldots,0)^\intercal, \ldots, (0,\ldots,0,-\infty)^\intercal$.
  Its tropical barycentric volume is $0$, the tropical one.   
\end{example}

\subsection{Properties of the tropical barycentric volume}
\label{sec:properties+tropical+volume}

We now collect basic properties of the tropical barycentric volume, exhibiting the close analogy to the Euclidean volume.
To this end, we need to introduce some notation.

We write $r^{\odot k} := \underbrace{r \odot \ldots \odot r}_{k\text{ times}}$ for tropical exponentiation.
Furthermore, let~$\P_\TT^d$ be the family of tropical polytopes in~$\TT^d$.
For $z \in \TT^d$, we consider the diagonal matrix $\diag(z_1, \ldots, z_d) \in \TT^{d \times d}$, and for an arbitrary permutation in the symmetric group on $d$ elements, let $\Sigma$ be the corresponding tropical permutation matrix.
The matrices of the form $\diag(z_1,\ldots,z_d) \odot \Sigma$ form the group $\Pi_d$ of \emph{scaled permutation matrices}.
We denote the subgroup of the matrices with $\bigodot_{i \in [d]} z_i = 0$, that is, those with tropical determinant equal to~$0$, by $\cR_d$. 

\goodbreak

\begin{proposition} \quad
\label{prop:tlvol-props}
\begin{enumerate}[i)]
 \item \emph{(Monotonicity):} For every $P,Q \in \P_\TT^d$ with $P \subseteq Q$, we have
 \[\tlvol(P) \leq \tlvol(Q).\]

 \item \emph{(Valuation):} $\tlvol : \P_\TT^d \to \TT$ is a \emph{valuation} in the sense that
 \[\tlvol(P) \oplus \tlvol(Q) = \tlvol(P \cup Q) \oplus \tlvol(P \cap Q),\]
 for every $P,Q \in \P_\TT^d$ such that $P \cup Q, P \cap Q \in \P_\TT^d$.

\item \emph{(Rotation invariance):}
  For $M \in \cR_d$ and $P \in \P_\TT^d$, we have \[\tlvol(M \odot P) = \tlvol(P).\]

 \item \emph{(Homogeneity):} For every $\lambda \in \TT$ we have \[\tlvol(\lambda \odot P) = \lambda^{\odot d} \odot \tlvol(P).\]

 \item \emph{(Non-singularity):} $\tlvol(P) = -\infty$ if and only if $\Tr_d(P) = \emptyset$.
\end{enumerate}
\end{proposition}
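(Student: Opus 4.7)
The plan is to reduce every item to Proposition~\ref{prop:tropical+volume+maximal+sum}, which gives the identity $\tlvol(P) = \max_{x \in \Tr_d(P)} \one^\intercal x$. When the maximum is finite it is attained at the componentwise maximum---that is, the tropical barycenter---of the closure of some full-dimensional cell $C$ of the covector decomposition of~$P$. Since such cells are bounded polytropes, these barycenters always lie in~$\R^d$, so $\tlvol(P) = -\infty$ precisely when no full-dimensional cell is present. A reformulation that I will use repeatedly is $\Tr_d(P) = \overline{\inter(P)}$, where $\inter(P)$ denotes the topological interior of $P$ in $\TT^d$; this holds because a point is locally $d$-dimensional in $P$ if and only if it lies in the closure of a $d$-dimensional cell of the decomposition.

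For monotonicity~(i), if $P \subseteq Q$ then every full-dimensional open cell of the decomposition of $P$ is contained in $\inter(Q)$, and the identification above yields $\Tr_d(P) \subseteq \Tr_d(Q)$. Homogeneity~(iv) is immediate because $\lambda \odot P = \lambda\one + P$ is a classical translation: it commutes with $\Tr_d$, and $\one^\intercal(\lambda\one + x) = d\lambda + \one^\intercal x = \lambda^{\odot d} \odot \one^\intercal x$. For rotation invariance~(iii), each $M = \diag(z) \odot \Sigma \in \cR_d$ acts as an affine cellular homeomorphism of $\TT^d$ and thus carries full-dimensional cells to full-dimensional cells, so $\Tr_d(M \odot P) = M \odot \Tr_d(P)$; the hypothesis $\bigodot_i z_i = 0$ reads $\sum_i z_i = 0$, which together with $\Sigma$ merely permuting coordinates gives $\one^\intercal(M \odot x) = \sum_i z_i + \sum_i x_i = \one^\intercal x$. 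Non-singularity~(v) follows from the opening observation.

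The main obstacle is the valuation property~(ii). Monotonicity already yields $\tlvol(P \cap Q) \le \tlvol(P),\tlvol(Q) \le \tlvol(P \cup Q)$, so after taking tropical sums the identity reduces to $\tlvol(P \cup Q) \le \max\{\tlvol(P),\tlvol(Q)\}$. My plan is to prove the stronger set inclusion $\Tr_d(P \cup Q) \subseteq \Tr_d(P) \cup \Tr_d(Q)$. Suppose for contradiction that some $v \in \Tr_d(P \cup Q)$ lies outside both $\overline{\inter(P)}$ and $\overline{\inter(Q)}$. Then there is an open neighborhood $U$ of $v$ in $\TT^d$ disjoint from $\inter(P) \cup \inter(Q)$, so $U \cap (P \cup Q) \subseteq \partial P \cup \partial Q$, a finite union of cells of dimension at most $d-1$. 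On the other hand $v \in \overline{C}$ for some full-dimensional open cell $C$ of the decomposition of $P \cup Q$, so $U \cap C$ is a non-empty open subset of $\TT^d$, and hence $d$-dimensional, contained in $U \cap (P \cup Q)$; this contradicts the containment in a lower-dimensional set. The step requiring the most care is the auxiliary identification $\Tr_d(\cdot) = \overline{\inter(\cdot)}$ together with the fact that the topological boundary of a tropical polytope is a finite union of lower-dimensional polytropes, which underpins the whole contradiction argument.
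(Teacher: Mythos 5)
Your proof is correct and follows essentially the same route as the paper, which deduces all five properties (in the more general setting of Proposition~\ref{prop:tlvoli-props}) from set-level identities for the trunk --- $\Tr_i(P) \subseteq \Tr_i(Q)$ under inclusion and $\Tr_i(P\cup Q) = \Tr_i(P) \cup \Tr_i(Q)$ for the idempotency/valuation property --- together with the same direct computations for rotation invariance and homogeneity. The only real difference is that the paper asserts these trunk identities without proof, whereas you actually justify them (for $i=d$) via the characterization $\Tr_d(\cdot)=\overline{\inter(\cdot)}$ and a topological dimension argument; this is sound, though it is specific to the top-dimensional trunk and would not directly extend to the general $\Tr_i$ statements of Proposition~\ref{prop:tlvoli-props}.
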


\noindent We will prove a more general statement in Proposition~\ref{prop:tlvoli-props}.

\begin{remark}
  Property~\romannumeral2) in Proposition~\ref{prop:tlvol-props} actually holds in a stronger form.
  Indeed, $\tlvol : \P_\TT^d \to \TT$ is an \emph{idempotent measure}, which means that $\max\left\{\tlvol(P),\tlvol(Q)\right\} = \tlvol(P \cup Q)$.
For a thorough investigation of idempotent measures we refer the reader to Akian~\cite{Akian:1999}.
\end{remark}

The Euclidean volume $\vol(\cdot)$ is multiplicative with respect to taking Cartesian products, that is, for any ordinary polytopes $P \subseteq \R^d$ and $Q \subseteq \R^e$ we have $\vol(P \times Q) = \vol(P) \cdot \vol(Q)$.
Again, the tropical barycentric volume $\tlvol(\cdot)$ exhibits an analogous behavior.

\begin{proposition}
\label{prop:multiplicativity-tlvol}
Let $P \in \P_\TT^d$ and $Q \in \P_\TT^e$.
Then, $P \times Q \in \P_\TT^{d+e}$ and
\[
\tlvol(P \times Q) = \tlvol(P) \odot \tlvol(Q).
\]
\end{proposition}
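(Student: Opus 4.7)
The plan is to reduce the identity to a structural statement on the $(d+e)$-trunk, namely $\Tr_{d+e}(P \times Q) = \Tr_d(P) \times \Tr_e(Q)$. Given this identity, Definition~\ref{def:tropical+volume} directly yields
\[
\tlvol(P \times Q) = \max_{(x,y) \in \Tr_d(P) \times \Tr_e(Q)} (\one^\intercal x + \one^\intercal y) = \tlvol(P) + \tlvol(Q) = \tlvol(P) \odot \tlvol(Q),
\]
because the maximum of a sum splits over the product of the two domains. The degenerate case $\Tr_d(P) = \emptyset$ or $\Tr_e(Q) = \emptyset$ also matches, as $\dim(P \times Q) = \dim P + \dim Q < d+e$ forces $\Tr_{d+e}(P \times Q) = \emptyset$ and both sides equal $-\infty$.

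As a preliminary, I would verify that $P \times Q$ is itself a tropical polytope: writing $P = \tconv(v_1,\ldots,v_m)$ and $Q = \tconv(w_1,\ldots,w_n)$, any $(x,y) \in P \times Q$ with tropical expressions $x = \bigoplus_i \lambda_i \odot v_i$ and $y = \bigoplus_j \mu_j \odot w_j$ (normalized so that $\bigoplus_i \lambda_i = \bigoplus_j \mu_j = 0$) admits $(x,y) = \bigoplus_{i,j} (\lambda_i \odot \mu_j) \odot (v_i, w_j)$, and the weights $\lambda_i \odot \mu_j$ again tropically sum to $0$. Hence $P \times Q = \tconv\{(v_i,w_j)\}$, so its covector decomposition and trunks are well defined.

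The key step is the trunk identity. For $(x,y) \in P \times Q$, the canonical weight of the generator $(v_i,w_j)$ equals $\min(\alpha_i(x), \beta_j(y))$, where $\alpha_i(x) = \min_k(x_k - v_{i,k})$ and $\beta_j(y) = \min_k(y_k - w_{j,k})$ are the canonical weights of $x$ in $P$ and $y$ in $Q$. A direct computation then shows that the type of $(x,y)$ at a coordinate $k \le d$ equals $\{(i,j) : i \in S_k(x),\, \alpha_i(x) \le \beta_j(y)\}$, and symmetrically for $k > d$. Consequently every cell of the covector decomposition of $P \times Q$ is contained in a product $F \times G$, with $F$ a cell of $P$ and $G$ a cell of $Q$; the additional inequalities $\alpha_i \le \beta_j$ are affine in $(x,y)$ and refine $F \times G$ into sub-cells of the same dimension $\dim F + \dim G$. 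A $(d+e)$-dimensional cell of $P \times Q$ therefore sits inside some $F \times G$ with $\dim F = d$ and $\dim G = e$, and every such $F \times G$ is covered by closures of such cells. Taking closures and unions produces $\Tr_{d+e}(P \times Q) = \Tr_d(P) \times \Tr_e(Q)$.

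The main obstacle is verifying that the order cuts $\alpha_i \le \beta_j$ genuinely subdivide $F \times G$ into relatively open polytopes of the same dimension, rather than collapsing it onto a lower-dimensional locus. Since $\alpha_i(\cdot)$ is piecewise linear and in fact linear on each cell of $P$ (and analogously for $\beta_j(\cdot)$ on $Q$), each equation $\alpha_i(x) = \beta_j(y)$ defines an affine hyperplane in $(x,y)$, so the refinement of $F \times G$ consists of top-dimensional pieces, confirming the identity. The multiplicativity of $\tlvol(\cdot)$ then follows as indicated.
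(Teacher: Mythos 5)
Your proof is correct and shares the paper's skeleton: both arguments reduce the statement to the trunk identity $\Tr_{d+e}(P \times Q) = \Tr_d(P) \times \Tr_e(Q)$ and then split the maximum of $\one^\intercal(\cdot)$ over the product (the degenerate case being absorbed by $\emptyset \times X = \emptyset$). The genuine difference is how the trunk identity is established. The paper cites Develin--Sturmfels for $P \times Q$ being a tropical polytope and then asserts that every face of $\F_{P\times Q}$ is a product of faces of $\F_P$ and $\F_Q$. You instead verify $P \times Q = \tconv\{(v_i,w_j)\}$ by hand and compute the covector types of the product directly, which reveals that $\F_{P\times Q}$ is in general only a \emph{refinement} of the product complex, cut out inside each $F \times G$ by the affine conditions comparing $\alpha_i(x)$ with $\beta_j(y)$. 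This is in fact the more careful route: the paper's assertion is not literally true (for instance, the covector decomposition of $\tconv\{(0,0),(1,0),(0,1),(1,1)\} = [0,1]\times[0,1]$ consists of two triangles separated by the diagonal, not of the single product cell), whereas your argument — that each $(d+e)$-cell of $\F_{P\times Q}$ lies in some $F \times G$ with $\dim F = d$, $\dim G = e$, and that the top-dimensional cells of the hyperplane-arrangement refinement are dense in each such $F \times G$, so their closures cover it — delivers exactly both inclusions needed for the trunk identity. Two small points worth making explicit: a cut $\alpha_i(x) = \beta_j(y)$ may degenerate to all of $F \times G$ (both weights constant and equal there), which imposes no refinement and hence causes no dimension drop; and the fact that the type of $(x,y)$ determines the types of $x$ and $y$ separately uses that $\max_j \beta_j(y) = 0$, so for every $i$ some $j$ satisfies $\alpha_i(x) \leq \beta_j(y)$. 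With these remarks your argument is complete.
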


\begin{proof}
The fact that $P \times Q$ is a tropical polytope when $P$ and $Q$ are, was proven in~\cite[Thm.~2]{DevelinSturmfels:2004}.
The claimed identity is based on the observation that taking the trunk commutes with taking Cartesian products, more precisely
\begin{align}
\Tr_{d+e} (P \times Q) = \Tr_d (P) \times \Tr_e (Q).\label{eqn:trunk-product}
\end{align}
Indeed, for any face $F \in \F_{P \times Q}$ that is contained in the $(d+e)$-trunk, there is a face $G \in \F_{P \times Q}$ with $F \subseteq G$ and $\dim(G) = d+e$.
Since every face of a product of polytopal complexes is a product of faces of the factors, we find $G_P \in \F_P$ and $G_Q \in \F_Q$ such that $G = G_P \times G_Q$, and since $\dim(G_P) + \dim(G_Q) = d+e$, we have $\dim(G_P)=d$ and $\dim(G_Q)=e$.
Therefore, writing $F = F_P \times F_Q$ for some $F_P \in \F_P$ and $F_Q \in \F_Q$, we obtain $F_P \subseteq G_P$ and $F_Q \subseteq G_Q$ and thus $F \in \Tr_d(P) \times \Tr_d(Q)$.
As all these arguments can be reversed, the relation~\eqref{eqn:trunk-product} follows.

With this information, we now have
\begin{align*}
\tlvol(P \times Q) &= \max_{x \in \Tr_{d+e}(P \times Q)} \one^\intercal x = \max_{(y,z) \in \Tr_d(P) \times \Tr_e(Q)} \one^\intercal(y,z)\\
&=\max_{y \in \Tr_d(P)} \one^\intercal y  + \max_{z \in \Tr_e(Q)} \one^\intercal z = \tlvol(P) \odot \tlvol(Q).\qedhere
\end{align*}
\end{proof}

\begin{example}
  A \emph{tropical prism} is the Cartesian product of a tropical polytope and a $1$-dimensional tropical polytope in $\TT$.
  As each $1$-dimensional tropical polytope is pure, its tropical barycentric volume is just the maximal point.
\end{example}

\subsection{Tropical volume revisited}
\label{sec:trop+vol+revisited}

We compare our volume notion with the two volume concepts introduced in~\cite{DepersinGaubertJoswig:2017}. 

\subsubsection{Second highest determinant}
\label{sssect:tvol}

For a matrix $A \in \TT^{d \times d}$, and a permutation $\sigma \in S_d$ we write $\tdet^\sigma(A) = \bigoplus_{\pi \in S_d \setminus \{\sigma\}} \bigodot_{i=1}^d a_{i,\pi(i)}$.
The tropical volume concept introduced in~\cite{DepersinGaubertJoswig:2017} can then be defined by
\[
\tvol(A) = \card{\tdet(A) - \tdet^\sigma(A)} \enspace ,
\]
where $\sigma \in S_d$ is a permutation at which $\tdet(A)$ is attained.
For the sake of distinction, we call this the \emph{tropical permutation volume} of~$A$.
This notion is motivated from an `energy gap' in statistical physics used in~\cite{KosowskyYuille:1994}. 
A property that distinguishes $\tvol(\cdot)$ from $\tlvol(\cdot)$ is that it is translation invariant in the classical sense, that is, if we write $v+A$ for the matrix that arises from $A$ after adding the vector $v \in \R^d$ to each column of~$A$, then $\tvol(v+A) = \tvol(A)$.
However, the homogeneity of $\tlvol(\cdot)$ described in Proposition~\ref{prop:tlvol-props}~\romannumeral3) shows that the two volume concepts are incomparable.
On the other hand, there always exists a scalar $\lambda \in \R$ such that $\tvol(A) = \tlvol(\lambda\one + P)$, where $P=\tconv(A)$.

\smallskip

As $\tvol(\cdot)$ is only defined for square matrices, we discuss potential extensions to rectangular matrices. 
The metric quantities in Definition~\ref{def:tropical+volume+from+lattice} and Definition~\ref{def:dequantized+volume} below are extended from a local measure to a global measure by taking a maximum, over points or submatrices. 
  Applying this idea to $\tvol(\cdot)$ suggests to extend the tropical permutation volume to rectangular matrices $A \in \TT^{d \times m}$ with $d \leq m$ by setting
  \begin{equation} \label{eq:maximal+tvol}
    \tvol_{\text{max-sub}}(A) = \max_{J \in \binom{[m]}{d}} \tvol(A_J) \enspace .
  \end{equation}
  This definition keeps the desirable property that the tropical permutation volume is zero if the tropical convex hull is lower-dimensional.

  In the study of tropical principal component analysis, the notion $\tvol(\cdot)$ is also discussed in~\cite[\S\,3.1]{YoshidaZhangZhang:2019}.
  They propose an extension to rectangular matrices in terms of a sum of tropical distances
  \begin{equation} \label{eq:best+fit+hyperplane}
    \tvol_{\text{best-fit}}(A) = \min_{z \in \TT^d} \sum_{j \in [m]} \operatorname{d_{Tr}}(\mathcal{H}(z),A^{(j)}) \enspace ,
  \end{equation}
  where $A^{(j)}$ is the $j$-th column of $A$, 
  \[
  \mathcal{H}(z) = \left\{x \in \TT^d : \exists\,i \neq j\text{ such that } x_i + z_i = x_j + z_j \geq x_{\ell} + z_{\ell}, \forall \ell \in [d] \right\}
  \]
  is the tropical hyperplane defined by $z$, and
  \[
  \operatorname{d_{Tr}}(v,w) = \max\left\{|v_i - w_i - v_j + w_j| \colon 1 \leq i,j \leq d\right\} 
  \]
  is the generalized Hilbert projective metric (cf.~\cite{CohenGaubertQuadrat:2004}).

  We propose a further investigation of the quantities defined in~\eqref{eq:maximal+tvol} and~\eqref{eq:best+fit+hyperplane}.

\subsubsection{Tropical dequantized volume}

The next concept defines a volume of a tropical polytope in terms of volumes of associated polytopes over Puiseux series as discussed in Section~\ref{sec:versions+convexity}.
Since $\pseries{\R}{t}$ is a real closed field, concepts like convex hull, polytope, and Euclidean volume can be defined analogously to the setting over~$\R$.
We use the shorthand notation $\vol \mathbf{M} := \vol(\conv(\mathbf{M}))$ for a matrix $\mathbf{M} \in \pseries{\R}{t}^{d \times m}$.

\begin{definition}[{\cite{DepersinGaubertJoswig:2017}}] \label{def:dequantized+volume}
  For a matrix $M \in \TT^{d \times m}$, the \emph{tropical (upper) dequantized volume} of $M$ is defined by
\[
\qtvol^+(M) := \sup\left\{\val \vol \mathbf{M} : \mathbf{M} \in \pseries{\R}{t}^{d \times m}, \val \mathbf{M} = M \right\}.
\]
\end{definition}

Depersin, Gaubert \& Joswig~\cite[Thm.~13]{DepersinGaubertJoswig:2017} gave an interpretation of the tropical dequantized volume in terms of the maximal tropical minor.
More precisely, they prove that for every $M \in \TT^{d \times m}$ we have
\begin{align}
\qtvol^+(M) = \max_{J \in \binom{[m]}{d}} \tdet(M_J).\label{eqn:DGJ}
\end{align}
The idea behind this formula is that the volume is essentially dominated by the maximal determinant of a simplex contained in a polytope. 
A close inspection of their proof shows that one can restrict to monomial lifts with integral coefficients, that is,
\[
\qtvol^+(M) = \sup\left\{\val \vol \mathbf{M} : \val \mathbf{M} = M\text{ with } \mathbf{M} \in \Z\{\!\!\{t\}\!\!\}^{d \times m} \text{ monomial}\right\},
\]
where $\mathbf{M}$ is called \emph{monomial} if $\mathbf{M}_{ij} = \alpha_{ij} t^{m_{ij}}$, for every $i,j$.

Based on~\eqref{eqn:DGJ}, we obtain that the tropical dequantized volume is an upper bound on the tropical barycentric volume.

\begin{theorem}
\label{thm:tvol_qtvol}
Let $M \in \TT\N^{d \times m}$ and let $P = \tconv(M)$ be the corresponding tropical lattice polytope.
Then,
\[
\tlvol(P) \leq \qtvol^+(M).
\]
\end{theorem}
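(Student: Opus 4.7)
My plan is to combine the identity $\qtvol^+(M) = \max_{J \in \binom{[m]}{d}} \tdet(M_J)$ from~\eqref{eqn:DGJ} with a covector-theoretic analysis of points in $\Tr_d(P)$. Since $\tlvol(P) = \max_{x \in \Tr_d(P)} \one^\intercal x$ by Definition~\ref{def:tropical+volume}, and since $\Tr_d(P)$ is a finite union of closed polytropes inside the bounded set $P$, this maximum is attained at some $x^* \in \Tr_d(P)$. The whole argument then reduces to finding a sufficiently rigid tropical representation of $x^*$ in terms of the columns of $M$.

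The key claim is that, because $x^*$ lies in the closure of some full-dimensional cell $C$ of the covector decomposition of $P$, there exist $\lambda \in \TT^m$ with $\bigoplus_{j=1}^m \lambda_j = 0$ and an injective map $\sigma \colon [d] \to [m]$ such that $x^*_i = \max_j(\lambda_j + M_{ij}) = \lambda_{\sigma(i)} + M_{i, \sigma(i)}$ for every $i$. To prove it, I would consider the parametrization $\Phi \colon \TT^m \to \TT^d$ defined by $\Phi(\mu)_i = \max_j (\mu_j + M_{ij})$, whose image is $P$. On the linear piece of $\Phi$ indexed by a covector of the form $(\{\tau(1)\},\ldots,\{\tau(d)\})$, $\Phi$ acts as $\mu \mapsto (\mu_{\tau(i)} + M_{i, \tau(i)})_i$, whose Jacobian has rank exactly $\card{\tau([d])}$. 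Hence the image of this piece has dimension at most $\card{\tau([d])}$, so full-dimensionality of $C$ forces $\card{\tau([d])} = d$, i.e.\ $\tau$ injective, for at least one $\tau$ whose piece maps into $C$. Taking interior points $y \to x^*$ along $C$ with associated $\mu^y$ sharing this same $\tau$ (possible after passing to a subsequence, since there are only finitely many covectors) and noting that $\mu^y_{\tau(i)} = y_i - M_{i,\tau(i)}$ converges while the normalization $\max_j \mu^y_j = 0$ is preserved, yields $\lambda = \lim_y \mu^y$ and $\sigma = \tau$ with the required properties.

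Once the key claim is available, setting $J = \{\sigma(1),\ldots,\sigma(d)\} \in \binom{[m]}{d}$ gives the chain
\[
\one^\intercal x^* = \sum_{i=1}^d \bigl(\lambda_{\sigma(i)} + M_{i,\sigma(i)}\bigr) \leq \sum_{i=1}^d M_{i,\sigma(i)} \leq \tdet(M_J) \leq \max_{J'} \tdet(M_{J'}) = \qtvol^+(M),
\]
where the first inequality uses $\lambda_j \leq 0$, the second uses that $\sigma$ restricts to a bijection $[d] \to J$, and the third uses~\eqref{eqn:DGJ}. I expect the main obstacle to lie in the rigorous execution of the limit argument in the key claim — in particular preserving the normalization $\max_j \mu^y_j = 0$ across the subsequential limit, controlling the coordinates $\mu^y_j$ for $j \notin \tau([d])$, and ruling out any degeneracy of $\tau$ at $x^*$. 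A cleaner route that bypasses these issues would be to invoke directly the Develin--Sturmfels characterization of full-dimensional covector cells as those whose covector admits a system of distinct representatives, which supplies $\sigma$ without any perturbation.
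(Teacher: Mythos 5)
Your proof is correct, but it takes a genuinely different route from the paper. The paper does not prove Theorem~\ref{thm:tvol_qtvol} directly: it obtains it as the $i=d$ case of Theorem~\ref{thm:tlvoli_tmi} combined with~\eqref{eqn:DGJ}. That argument decomposes $\Tr_d(P)$ into alcoved simplices $\Delta_\pi^s(a)$ via the covector decomposition, bounds $\max_x \one^\intercal x$ on each simplex by the top tropical minor of its vertex matrix, and transfers this bound to $M$ through a tropical Cauchy--Binet monotonicity lemma (Lemma~\ref{lem:tmi-monotone}) together with a sign-genericity check for alcoved simplices (Lemma~\ref{lem:trop-sign-gener-simplices}). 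You instead work directly with the parametrization $\mu \mapsto M \odot \mu$ and a dimension count on its linear pieces: a non-injective argmax pattern $\tau$ confines the image to an affine subspace of dimension $\card{\tau([d])} < d$, so a full-dimensional cell contains a dense set of points admitting an injective assignment $\sigma$, whence $x_i \leq M_{i,\sigma(i)}$ and $\one^\intercal x \leq \tdet(M_{\sigma([d])})$. This is essentially the Develin--Sturmfels characterization of full-dimensional covector cells, as you note yourself, and it buys a more elementary proof (no Cauchy--Binet, no alcoved triangulation, and no integrality hypothesis on $M$), at the cost of yielding only the top case $i=d$ rather than all lower barycentric $i$-volumes. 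One remark: the difficulties you flag at the end are not real obstacles, because the inequality does not require a full tropical representation of $x^*$. You only need, for a single injective $\sigma$, the inequalities $x^*_i \leq M_{i,\sigma(i)}$; these hold at a dense set of points of the open cell $C$ (with $\sigma$ constant along a subsequence, as there are finitely many patterns) and pass to the closure by continuity, so neither the normalization of the limit coefficients nor the coordinates $\mu^y_j$ for $j \notin \sigma([d])$ ever enter the argument.
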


\noindent This inequality is a special case of Theorem~\ref{thm:tlvoli_tmi} that we prove later.
Note that for every $\ell \in \N_{\geq2}$, there exists a matrix $M$ such that $\tlvol(P) = 2$ and $\qtvol^+(M) = \ell$.
An example is given by $M = \left( \begin{array}{ccc} 0 & 0 & \ell-1 \\ 0 & 1 & \ell-1 \end{array} \right)$.
The case $\ell=4$ is depicted in Figure~\ref{fig:latpoly+3}.

A characterization of the equality case follows right from Definition~\ref{def:tropical+volume}.

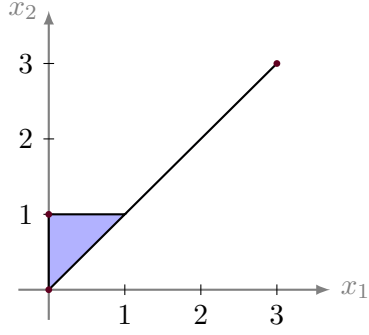
\begin{figure}
  \begin{tikzpicture}

  \Koordinatenkreuz{-0.4}{3.7}{-0.4}{3.7}{$x_1$}{$x_2$}

    \foreach \x in {1,2,3}
    \draw[shift={(\x,0)}] (0pt,2pt) -- (0pt,-2pt) node[below] {$\x$};

    \foreach \y in {1,2,3}
    \draw[shift={(0,\y)}] (2pt,0pt) -- (-2pt,0pt) node[left] {$\y$};

  \coordinate (a) at (0,0);
  \coordinate (b) at (0,1);
  \coordinate (c) at (3,3);

  \coordinate (origin) at (0,0);


  \draw[black,thick, fill=blue!30] (1,1) -- (0,0) -- (0,1) -- cycle -- (3,3);

  \node[Endpoint] at (a){};
  \node[Endpoint] at (b){};
  \node[Endpoint] at (c){};
  
\end{tikzpicture}
  \caption{A canonical tropical lattice polygon $P$ with $\tlvol(P) = 2$ and $\qtvol^+(P) = 4$.}
  \label{fig:latpoly+3}
\end{figure}

\begin{proposition}
Let $M \in \TT\N^{d \times m}$ and let $P = \tconv(M)$.
Then
\[
\tlvol(P) = \qtvol^+(M)
\]
if and only if the tropical barycenter of~$P$ is contained in~$\Tr_d(P)$.
\end{proposition}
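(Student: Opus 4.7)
The plan is to sandwich $\tlvol(P)$ and $\qtvol^+(M)$ between the input from Theorem~\ref{thm:tvol_qtvol} and the invariant $\one^\intercal b(P)$, then inspect the equality case via the pseudovertex of $\Tr_d(P)$ attaining the $\tlvol$-maximum.

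First I would establish the chain $\tlvol(P) \le \qtvol^+(M) \le \one^\intercal b(P)$. The left inequality is Theorem~\ref{thm:tvol_qtvol}. For the right one, the Depersin--Gaubert--Joswig identity $\qtvol^+(M) = \max_{J \in \binom{[m]}{d}} \tdet(M_J)$ reduces it to showing $\tdet(M_J) \le \sum_i \max_j M_{ij} = \one^\intercal b(P)$ for every $J$, which is immediate: $\tdet(M_J) = \max_\pi \sum_i M_{i, j_{\pi(i)}}$ and dropping the distinctness requirement of the $j_{\pi(i)}$'s can only enlarge the sum. Combined with the fact that every $x \in P$ satisfies $x \le b(P)$ coordinatewise (by the very definition of tropical convex combinations) and $b(P) \in P$, this also gives $\tlvol(P) \le \one^\intercal b(P)$, with equality precisely when $b(P) \in \Tr_d(P)$, since $b(P)$ is then the unique maximizer of $\one^\intercal x$ on $P$.

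The direction $(\Leftarrow)$ drops out: if $b(P) \in \Tr_d(P)$, then $\tlvol(P) = \one^\intercal b(P)$ and the entire chain collapses to equalities, forcing $\tlvol(P) = \qtvol^+(M)$.

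For $(\Rightarrow)$, assume $\tlvol(P) = \qtvol^+(M)$ and let $y$ be the tropical barycenter of $\Tr_d(P)$ (well-defined by Proposition~\ref{prop:dtrunk+tropical+convex}), so that $\tlvol(P) = \one^\intercal y$. The covector decomposition of the $d$-dimensional cell whose closure contains $y$ (Section~\ref{sect:alcovedtriang}) provides an injective type $\phi \colon [d] \to [m]$ and scalars $\lambda_j \in \TT$ with $\max_j \lambda_j = 0$ such that $y_i = \lambda_{\phi(i)} + M_{i, \phi(i)}$ is the row-$i$ argmax in the representation $y = \bigoplus_j \lambda_j \odot M_{\cdot, j}$. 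Then
\[
\tlvol(P) = \one^\intercal y = \sum_i \lambda_{\phi(i)} + \sum_i M_{i, \phi(i)} \;\le\; 0 + \tdet(M_{\phi([d])}) \;\le\; \qtvol^+(M),
\]
so the assumption forces $\lambda_{\phi(i)} = 0$ for every $i$ and $\phi$ to realize $\tdet(M_{\phi([d])}) = \qtvol^+(M)$ as an internal system of distinct representatives of $M_{\phi([d])}$. A short swap against the row-max sets $J_i = \{j : M_{ij} = b(P)_i\}$ upgrades this to a global SDR: if $\phi(i_0) \notin J_{i_0}$ and $j^* \in J_{i_0}$, then either $j^* \notin \phi([d])$ and substitution strictly increases the permutation sum (contradicting optimality of $\phi$), or $j^* = \phi(i_1)$ for some $i_1 \neq i_0$ and $M_{i_0, \phi(i_1)} = b(P)_{i_0} > M_{i_0, \phi(i_0)}$ violates the internal-SDR property. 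Hence $\phi(i) \in J_i$ for every $i$, so $y_i = M_{i, \phi(i)} = b(P)_i$, i.e., $y = b(P) \in \Tr_d(P)$.

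The main obstacle is the structural input that the pseudovertex $y = b(\Tr_d(P))$ admits an injective-type representation inside a $d$-dimensional cell; this is standard in the alcoved/covector framework of Section~\ref{sect:alcovedtriang}, but must be extracted carefully. Modulo this ingredient, the remainder is coordinatewise bookkeeping combined with a Hall-style swapping argument.
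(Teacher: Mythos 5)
Your proof is correct, but it is worth saying that the paper offers essentially no argument here: it asserts that the characterization ``follows right from Definition~\ref{def:tropical+volume}''. That is true for the direction you call $(\Leftarrow)$ --- your chain $\tlvol(P)\le\qtvol^+(M)\le\one^\intercal b(P)$, with $\tlvol(P)=\one^\intercal b(P)$ iff $b(P)\in\Tr_d(P)$ by coordinatewise domination, is exactly the intended content --- but the direction $(\Rightarrow)$ is genuinely not immediate, since $\qtvol^+(M)<\one^\intercal b(P)$ is possible (the paper's own example $M=\left(\begin{smallmatrix}0&0&\ell-1\\0&1&\ell-1\end{smallmatrix}\right)$ has $\qtvol^+=\ell<2\ell-2$), so one must rule out $\tlvol(P)=\qtvol^+(M)<\one^\intercal b(P)$. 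Your argument for this does work. The structural input you flag --- that $y=\bt(\Tr_d(P))$ lies in the closure of a $d$-cell and hence admits a representation $y_i=\lambda_{\phi(i)}+M_{i,\phi(i)}$ with $\phi$ injective and each term attaining the row-$i$ maximum --- is precisely the Develin--Sturmfels characterization of full-dimensional cells as those whose covector has all singleton sectors, applied to $\widebar{M}$ (append a row of tropical ones to account for the normalization $\bigoplus_j\lambda_j=0$); this is the same mechanism that drives the paper's proof of Theorem~\ref{thm:tlvoli_tmi}. Two small points on your equality analysis: in Case 2 the phrase ``violates the internal-SDR property'' should be replaced by the concrete contradiction $y_{i_0}\ge\lambda_{j^*}+M_{i_0,j^*}=M_{i_0,j^*}=b(P)_{i_0}>M_{i_0,\phi(i_0)}=y_{i_0}$, which uses only $\lambda_{j^*}=0$ and the argmax property of the representation; and in fact the detour through the sets $J_i$ is unnecessary --- once $\lambda_{\phi(i)}=0$ for all $i$ and single-swap optimality of $\phi$ are established, one gets $y\ge M_{\cdot,j}$ for every column $j$ directly (for $j\in\phi([d])$ from $\lambda_j=0$, for $j\notin\phi([d])$ from the swap), whence $y=b(P)$. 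Finally, note the degenerate case $\Tr_d(P)=\emptyset$ with $\qtvol^+(M)=-\infty$ (possible only if $M$ has $-\infty$ entries) is not covered by either your argument or the paper's statement; for finite $M$ it cannot occur.
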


\begin{corollary}
\label{cor:pure}
For pure tropical lattices polytopes $P = \tconv(M)$, the magnitudes $\qtvol^+(M)$ and $\tlvol(P)$ agree.
\end{corollary}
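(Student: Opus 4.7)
The plan is to deduce the corollary directly from the preceding proposition, by showing that for a pure tropical lattice polytope the tropical barycenter always lies in the $d$-trunk. Recall that the proposition characterizes equality $\tlvol(P) = \qtvol^+(M)$ by the condition that the tropical barycenter of $P$ belongs to $\Tr_d(P)$. By definition of pureness, a pure tropical polytope coincides with its $d$-trunk, so it will suffice to verify the essentially trivial fact that the tropical barycenter is contained in $P$ itself.

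First I would unwind the definition of the tropical barycenter. If $V = \{v_1,\ldots,v_m\}$ denotes the vertex set of $P = \tconv(V)$, then I claim the componentwise maximum $b := \bigoplus_{j=1}^m v_j$ is the tropical barycenter. Indeed, for any point $x = \bigoplus_j \lambda_j \odot v_j$ with $\bigoplus_j \lambda_j = 0$, one has $\lambda_j \leq 0$ for each $j$, hence $\lambda_j \odot v_j \leq v_j$ coordinatewise and therefore $x \leq b$ coordinatewise. On the other hand, choosing all $\lambda_j = 0$ (which is a valid choice of tropical convex coefficients) gives $b \in P$, and this point is coordinatewise maximal among points of $P$, so it is the tropical barycenter.

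With this in hand, the argument concludes in one line. Since $P$ is assumed pure, we have $P = \Tr_d(P)$ by definition, so in particular $b \in P = \Tr_d(P)$. Applying the previous proposition yields $\tlvol(P) = \qtvol^+(M)$, proving the corollary. There is no real obstacle here; the content of the corollary is that pureness is exactly what ensures that the componentwise-maximal point of $P$ lies in a full-dimensional cell of the covector decomposition, which is precisely the hypothesis needed by the previous proposition.
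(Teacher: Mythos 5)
Your proposal is correct and matches the paper's intended (and essentially immediate) argument: the paper derives the corollary directly from the preceding proposition, using exactly the facts that purity means $P=\Tr_d(P)$ and that the tropical barycenter, being the componentwise maximum of the vertices (attained with all coefficients $\lambda_j=0$), lies in $P$. Your explicit verification that the componentwise maximum is a valid tropical convex combination is a correct unwinding of what the paper leaves implicit.
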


Although this shows that the two tropical volume concepts $\tlvol(\cdot)$ and $\qtvol^+(\cdot)$ are closely related, they are inherently different.
For example, the multiplicativity of $\tlvol(\cdot)$ proved in Proposition~\ref{prop:multiplicativity-tlvol} is not shared by $\qtvol^+(\cdot)$ in general.
For instance, let $M=\left(\begin{array}{ccc}0 & 1 & \ell\\ 0 & 0 & \ell\end{array}\right)$ and $N=(0 \ 1)$.
Then, $\qtvol^+(M) = \ell + 1$, $\qtvol^+(N) = 1$, and $\qtvol^+(M \times N) = 2 \ell + 1$, where $M \times N=\left(\begin{array}{cccccc}0 & 1 & \ell & 0 & 1 & \ell\\ 0 & 0 & \ell & 0 & 0 & \ell\\ 0 & 0 & 0 & 1 & 1 & 1\end{array}\right)$ represents the defining matrix of the product of the tropical polytopes $\tconv(M) \subseteq \TT^2$ and $\tconv(N) \subseteq \TT$.

\section{Metric estimates for tropical polytopes}
\label{sec:metric+estimates+lower+volumes}

We here study generalizations of $\tlvol(\cdot)$ and $\qtvol^+(\cdot)$ to lower-dimen\-sional parts and see how they relate to each other and to the tropical Ehrhart coefficients.

\subsection{Lower-dimensional tropical volumes and their properties}

Using the intuition gained from Proposition~\ref{prop:tropical+volume+maximal+sum}, we define finer measures to study volumetric properties of tropical polytopes. 
We will see that the following is both a useful and reasonable concept of lower-dimensional tropical volume measures.

\begin{definition}[Tropical barycentric $i$-volumes]
Let $P \subseteq \TT^d$ be a tropical polytope and let $i \in [d]$.
We define the \emph{tropical upper barycentric $i$-volume} and the \emph{tropical lower barycentric $i$-volume} of~$P$ by
\[
\tlvol_i^+(P) := \max_{x \in \Tr_i(P)} \,\max\left\{ v^\intercal x : v \in \{0,1\}^d, \one^\intercal v = i \right\}
\]
and
\[
\tlvol_i^-(P) := \max_{x \in \Tr_i(P)} \,\min\left\{ v^\intercal x : v \in \{0,1\}^d, \one^\intercal v = i \right\},
\]
respectively.
\end{definition}

\begin{example}
  We consider again the tropical polytope $P$ from Example~\ref{ex:non+connected+trunk}.
  As $P$ is $2$-dimensional, we get
  \[
  \tlvol_4^+(P) = \tlvol_4^-(P) = \tlvol_3^+(P) = \tlvol_3^-(P) = -\infty \enspace .
  \]
  Using the explicitly given pseudovertices, we obtain $\tlvol_2^+(P) = 18$ (attained at each point of the $2$-trunk), $\tlvol_2^-(P) = 2$ (attained at $p$ and $q$), $\tlvol_1^+(P)  = 9$ (attained at each point of the $1$-trunk), $\tlvol_1^-(P) = 9$ (attained at $r$).
\end{example}

When we write $\tlvol_i^\pm(\cdot)$, we refer to both the upper and the lower tropical barycentric $i$-volume simultaneously.
Each tropical barycentric $i$-volume comes with its own natural properties analogous to those of $\tlvol(\cdot)$ stated in Proposition~\ref{prop:tlvol-props}.
For the rotation invariance we need the following refined subsets of scaled permutation matrices (see Section~\ref{sec:properties+tropical+volume}):
\[
\cR_{d,i}^+ := \left\{\diag(z_1,\ldots,z_d) \odot \Sigma : \max\big\{ v^\intercal z : v \in \{0,1\}^d, \one^\intercal v = i \big\} = 0 \right\} \subseteq \Pi_d,
\]
and
\[
\cR_{d,i}^- := \left\{\diag(z_1,\ldots,z_d) \odot \Sigma : \min\big\{ v^\intercal z : v \in \{0,1\}^d, \one^\intercal v = i \big\} = 0 \right\} \subseteq \Pi_d.
\]
We retrieve $\cR_d = \cR_{d,d}^\pm$ as a special case.

\begin{example}
  Note that for $i < d$, these subsets do not necessarily form a group as the product
  \[
  \begin{pmatrix} -\infty & -\infty & 2 \\ -3 & -\infty & -\infty \\ -\infty & -2 & -\infty \end{pmatrix} \odot \begin{pmatrix} -1 & -\infty & -\infty \\ -\infty & -2 & -\infty \\ -\infty & -\infty & 1 \end{pmatrix} = \begin{pmatrix} -\infty & -\infty & 3 \\ -4 & -\infty & -\infty \\ -\infty & -4 & -\infty \end{pmatrix} \not\in \cR_{3,2}^+ 
  \]
  shows.
  An arbitrary matrix in $\cR_{3,1}^-$ is
  \[
  \begin{pmatrix} 0 & -\infty & -\infty \\ -\infty & -\infty & 4 \\ -\infty & 5 & -\infty \end{pmatrix} \enspace .
  \]
\end{example}

Since $\tlvol_d^\pm(P) = \tlvol(P)$, the proof of the following properties also proves Proposition~\ref{prop:tlvol-props}.

\begin{proposition}
\label{prop:tlvoli-props}\ 
\begin{enumerate}[i)]
 \item \emph{(Monotonicity):} For every $P, Q \in \P_\TT^d$ with $P \subseteq Q$, we have 
 \[\tlvol_i^\pm(P) \leq \tlvol_i^\pm(Q).\]

 \item \emph{(Idempotency):} For every $P,Q \in \P_\TT^d$ such that $P \cup Q \in \P_\TT^d$, we have
 \[\tlvol_i^\pm(P) \oplus \tlvol_i^\pm(Q) = \tlvol_i^\pm(P \cup Q).\]

 \item \emph{(Rotation invariance):} For every $P \in \P_\TT^d$ and every $M \in \cR_{d,i}^\pm$, we have \[\tlvol_i^\pm(M \odot P) = \tlvol_i^\pm(P).\]

 \item \emph{(Homogeneity):} For every $\lambda \in \TT$ we have \[\tlvol_i^\pm(\lambda \odot P) = \lambda^{\odot i} \odot \tlvol_i^\pm(P).\]

 \item \emph{(Non-singularity):} $\tlvol_i^\pm(P) = -\infty$ if and only if $\Tr_i(P) = \emptyset$.

\end{enumerate}
\end{proposition}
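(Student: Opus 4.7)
My plan is to verify each of the five properties separately, leveraging the fact that $\tlvol_i^\pm(\cdot)$ is defined as an extremal linear functional over the set $\Tr_i(P)$, so that each property reduces to understanding how this set and this functional transform under the relevant operation. The easiest cases are \emph{homogeneity} and \emph{non-singularity}. For homogeneity, one uses that classically $\lambda \odot P = \lambda\one + P$ and that the covector decomposition is translation-equivariant, so $\Tr_i(\lambda \odot P) = \lambda\one + \Tr_i(P)$; then $v^\intercal(\lambda\one+x) = \lambda \one^\intercal v + v^\intercal x = i\lambda + v^\intercal x$ since $\one^\intercal v = i$, and both $\max_v$ and $\min_v$ absorb this additive constant, giving $\tlvol_i^\pm(\lambda \odot P) = i\lambda + \tlvol_i^\pm(P) = \lambda^{\odot i} \odot \tlvol_i^\pm(P)$. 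For non-singularity, $\Tr_i(P)=\emptyset$ forces the max over the empty set to be $-\infty$ by convention; conversely, any non-empty $\Tr_i(P)$ contains a pseudovertex of $\F_P$ lying in an $\ge i$-dimensional cell, where the inner extremum $\max_v v^\intercal x$ or $\min_v v^\intercal x$ evaluates to a finite value.

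For \emph{monotonicity} and \emph{idempotency} the key point is to control how the $i$-trunk behaves under set-theoretic operations. The crucial lemma is: if $P \subseteq Q$ then $\Tr_i(P) \subseteq \Tr_i(Q)$. Indeed, if $x \in \Tr_i(P)$ lies in the closure of an $i$-dimensional open polytrope $G \in \F_P$, then choose an interior point $g \in G$; since $g \in Q$ and a cell of $\F_Q$ containing an $i$-dimensional neighborhood of $g$ must itself have dimension $\ge i$, the point $x$ lies in the closure of an $\ge i$-dimensional cell of $\F_Q$. Monotonicity is then immediate, as the supremum of any functional over $\Tr_i(P)$ is bounded by that over the larger set $\Tr_i(Q)$. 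Idempotency reduces by monotonicity to the inequality $\tlvol_i^\pm(P \cup Q) \leq \tlvol_i^\pm(P) \oplus \tlvol_i^\pm(Q)$; here I would argue that the piecewise-linear functional $x \mapsto \max_v v^\intercal x$ (resp.\ $\min_v v^\intercal x$) attains its maximum over the closed polyhedral set $\Tr_i(P \cup Q)$ at a pseudovertex of the refined covector decomposition of $P \cup Q$, and any such pseudovertex lies in $P$ or $Q$, whereby the same local dimension argument puts it in $\Tr_i(P)$ or $\Tr_i(Q)$.

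The \emph{rotation invariance} is the main obstacle and is the most delicate of the five properties. Writing $M = \diag(z_1,\ldots,z_d) \odot \Sigma$ with associated permutation $\sigma$, the map $x \mapsto M \odot x$ is a bijection preserving the polyhedral structure, so $\Tr_i(M \odot P) = M \odot \Tr_i(P)$; a direct computation then yields
\[
v^\intercal(M \odot x) = v^\intercal z + (\sigma^{-1}\cdot v)^\intercal x, \qquad (\sigma^{-1}\cdot v)_j := v_{\sigma(j)},
\]
and the substitution $w = \sigma^{-1}\cdot v$ is a bijection on the set $\{v \in \{0,1\}^d : \one^\intercal v = i\}$. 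This reduces invariance to showing $\max_v (v^\intercal z + w(v)^\intercal x) = \max_w w^\intercal x$ (resp.\ the analogous identity for the min), which is exactly where the defining constraint $\max_v v^\intercal z = 0$ for $\cR_{d,i}^+$ (resp.\ $\min_v v^\intercal z = 0$ for $\cR_{d,i}^-$) enters: it pins down the extremum achieved by the $z$-term so that it combines additively and compatibly with the $x$-term. The key case $i = d$ recovers $\cR_d$, where there is only one admissible $v = \one$ and the argument collapses to the single identity $\one^\intercal z = 0$, confirming that the five properties restrict correctly to the statement of Proposition~\ref{prop:tlvol-props}.
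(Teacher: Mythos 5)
Your treatment of monotonicity, idempotency, homogeneity and non-singularity is correct and follows the same route as the paper: everything is reduced to the behaviour of $\Tr_i(\cdot)$ under inclusions, unions and translations, and your local-dimension justification of $\Tr_i(P)\subseteq\Tr_i(Q)$ supplies a detail the paper simply asserts. The problem is part iii), exactly at the step you flag as delicate and then wave through. After the (correct) reduction, you need the identity $\max_v\bigl(v^\intercal z + w(v)^\intercal x\bigr)=\max_w w^\intercal x$ (resp.\ the min version), and you claim the normalization $\max_v v^\intercal z=0$ (resp.\ $\min_v v^\intercal z=0$) "pins down the extremum so that it combines additively." It does not: an extremum of a sum of two functions of $v$ is not the sum of the extrema unless both are attained at the same $v$, and the normalization gives no control over which $v$ attains the extremum of the $x$-term. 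This is a genuine gap, not a presentational one. Indeed the identity fails for $i<d$: take $d=2$, $i=1$, $\Sigma$ the identity, $z=(0,5)^\intercal$, so that $M=\diag(0,5)\in\cR_{2,1}^-$, and let $P=\tconv\{(0,0)^\intercal,(1,0)^\intercal\}$, the classical segment $[(0,0)^\intercal,(1,0)^\intercal]$ with $\Tr_1(P)=P$. Then $\tlvol_1^-(P)=\max_{t\in[0,1]}\min\{t,0\}=0$, while $M\odot P=P+(0,5)^\intercal$ gives $\tlvol_1^-(M\odot P)=\max_{t\in[0,1]}\min\{t,5\}=1$. So your argument cannot be completed as written for $i<d$; it only closes for $i=d$, where the index set $\{v\in\{0,1\}^d:\one^\intercal v=i\}$ is the singleton $\{\one\}$ and the extremum trivially distributes, recovering Proposition~\ref{prop:tlvol-props}~iii).

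For what it is worth, the paper's own proof of iii) commits the same interchange in its second displayed equality, pulling $\min_v v^\intercal z$ out of $\min_v\bigl(v^\intercal y+v^\intercal z\bigr)$, so you have reproduced the published argument rather than repaired it; but since the identity genuinely fails, a correct proof of iii) for $i<d$ would require strengthening the hypothesis on $M$ (for instance demanding that $v^\intercal z$ be constant, hence zero, over all admissible $v$) rather than a cleverer manipulation.
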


\begin{proof}
\romannumeral1): If $P \subseteq Q$, then $\Tr_i(P) \subseteq \Tr_i(Q)$.
 Thus $\tlvol_i^\pm(P) \leq \tlvol_i^\pm(Q)$.

\romannumeral2): If $P \cup Q \in \P_\TT^d$, then $\Tr_i(P \cup Q) = \Tr_i(P) \cup \Tr_i(Q)$ from which the claimed identity follows.

\romannumeral3): Let $M = \diag(z_1,\ldots,z_d) \odot \Sigma \in \cR_{d,i}^-$.
By definition
\begin{align*}
 \tlvol_i^-(M \odot P) &= \max_{x \in \Tr_i(\Sigma\odot P + z)} \,\min\left\{ v^\intercal x : v \in \{0,1\}^d, \one^\intercal v = i \right\}\\
 &= \tlvol_i^-(\Sigma\odot P) + \min\left\{ v^\intercal z : v \in \{0,1\}^d, \one^\intercal v = i \right\}\\
 &= \tlvol_i^-(P).
\end{align*}
 The proof for $\tlvol_i^+$ and matrices $M \in \cR_{d,i}^+$ is analogous.

\romannumeral4): By definition
 \begin{align*}
 \tlvol_i^-(\lambda \odot P) &= \max_{x \in \Tr_i(P + \lambda\one)} \,\min\left\{ v^\intercal x : v \in \{0,1\}^d, \one^\intercal v = i \right\}\\
 &= \tlvol_i^-(P) + \lambda i.
 \end{align*}
 Again, the proof for $\tlvol_i^+$ is analogous.

\romannumeral5): Immediate from the definition.
\end{proof}

\noindent It is easy to check that, since $\Tr_1(P) = P$, we have
\[
\tlvol_1^+(P) = \max_{1 \leq i \leq d} \tlvol(\pi_i(P)),
\]
where $\pi_i:\R^d \to \R$ is the projection onto the $i^{th}$ coordinate.
This raises the question whether the tropical upper barycentric $i$-volumes admit a tropical analog of the integral representation formula for the intrinsic volumes (or quermassintegrals) of an ordinary polytope (see~\cite{schneider1993convex} for definition and properties).
Roughly speaking these formulae show that the $i^{th}$ intrinsic volume is the average of the volumes of the $i$-dimensional projections of the given polytope (cf.~\cite[Thm.~19.3.2]{buragozalgaller1988geometric} for details).

\begin{question}
Is it true that for every tropical polytope $P \subseteq \TT^d$ and every index $i \in [d]$ we have
\[
\tlvol_i^+(P) = \max_{J \in \binom{[d]}{i}} \tlvol(\pi_J(P)),
\]
where $\pi_J:\R^d \to \R^{\card{J}}$ is the projection onto the coordinates indexed by~$J$ ?
\end{question}

Note that an analogous result cannot hold for the tropical lower barycentric $i$-volumes.
Even for $i=1$, the valid inequality
\[
\tlvol_1^-(P) \leq \min_{1 \leq i \leq d} \tlvol(\pi_i(P))
\]
can be strict.

\subsection{Estimates on tropical Ehrhart coefficients}

In this part, we argue how the tropical barycentric $(d-1)$-volumes can be used to estimate the second highest tropical Ehrhart coefficient.
To this end, let $Q \subseteq \R^d$ be an $m$-dimensional classical lattice polytope, with $m \leq d$.
The \emph{relative volume} of $Q$ is defined as
\[
\rvol(Q) := \frac{\vol_m(Q)}{\det(\Z^d \cap \aff(Q))} = \lim_{t \to \infty} \frac{1}{t^m} \cdot \shp{ tQ \cap \Z^d },
\]
where $\vol_m(Q)$ denotes the Lebesgue measure in $\aff(Q)$.
Of course, if $m=d$, then $\rvol(Q) = \vol(Q)$.
Let us record a well-known result from Ehrhart theory (cf.~\cite[Sect.~5.4]{beckrobins2007computing}): The highest coefficient of the Ehrhart polynomial $\shp{kQ \cap \Z^d} = \sum_{i=0}^m c_i(Q) k^i$ of $Q$, equals its relative volume, that is, $c_m(Q) = \rvol(Q)$.

The second highest tropical Ehrhart coefficient $c_{d-1}^b(P)$ of a tropical lattice polytope $P \subseteq \TT^d$ admits a more convenient representation than the signed sum in~\eqref{eqn:cib-representation}.
To this end, recall that an element in $\T_P$ is called \emph{maximal} if it is not properly contained in another element of~$\T_P$.

\begin{lemma}
\label{lem:second+highest+trop+coeff}
Let $P \subseteq \TT^d$ be a tropical lattice polytope.
Then,
\[
c_{d-1}^b(P) = \sum_{\substack{\Delta_\pi^s(a) \in \T_P\\ \dim(\Delta_\pi^s(a)) = d-1}} \delta(\Delta_\pi^s(a)) \cdot (b-1)^{d-1} \rvol(D_b^a\Delta^s_\pi(\zero)),
\]
where $\delta(\Delta_\pi^s(a)) = 1$, if $\Delta_\pi^s(a)$ is maximal, $\delta(\Delta_\pi^s(a)) = 0$, if $\Delta_\pi^s(a) \subseteq \acirc{P}$, and $\delta(\Delta_\pi^s(a)) = \frac12$, otherwise.
\end{lemma}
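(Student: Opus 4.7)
The plan is to specialize the representation in Theorem~\ref{thm:tropEhrCoeffsRep} to $i=d-1$ and then regroup the signed sum by collecting, for each open $(d-1)$-dimensional alcoved piece $\Delta^s_\pi(a) \in \T_P$, the contributions of the adjacent $d$-dimensional pieces. Since only $m \in \{d-1,d\}$ contribute, the formula reads
\begin{align*}
c_{d-1}^b(P) \;=\; (b-1)^{d-1}\!\!\sum_{\dim(\Delta^s_\pi(a))=d-1}\!\! c_{d-1}(D_b^a \widebar{\Delta^s_\pi(\zero)}) \;-\; (b-1)^{d-1}\!\!\sum_{\dim(\Delta_\pi(a))=d}\!\! c_{d-1}(D_b^a \widebar{\Delta_\pi(\zero)}).
\end{align*}
For the first sum, the leading coefficient of a classical Ehrhart polynomial of a $(d-1)$-dimensional lattice simplex is exactly its relative volume, so each summand equals $\rvol(D_b^a \widebar{\Delta^s_\pi(\zero)})$. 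For the second sum, I would invoke the standard Ehrhart identity $c_{d-1}(\bar Q) = \tfrac12\sum_{F\text{ facet of }\bar Q} \rvol(F)$ (cf.~\cite[Thm.~5.6]{beckrobins2007computing}) to rewrite each $d$-dimensional contribution as a sum over its $d$ facets, each of which is a translate/dilate of a $(d-1)$-dimensional alcoved simplex of the same family $\Delta^s_\pi(a)$.

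Next I would reindex the second sum by the $(d-1)$-dimensional open alcoved simplices $\Delta^s_\pi(a)$, counting how often each such simplex appears as a facet of some $d$-dimensional element of~$\T_P$. In the ambient alcoved triangulation of~$\R^d$, a given $(d-1)$-simplex is a common facet of exactly two $d$-dimensional alcoved simplices, so the number of times it contributes to $\T_P$ is $0$, $1$, or $2$. I would then do a three-case analysis matching the three values of $\delta$: (a) if $\Delta^s_\pi(a) \subseteq \acirc{P}$ both adjacent $d$-simplices belong to~$\T_P$, producing total multiplicity $2 \cdot \tfrac12 = 1$ in the subtracted sum, which exactly cancels the $+1$ contribution from the first sum, giving $\delta=0$; (b) if $\Delta^s_\pi(a)$ lies on $\bd P$ but bounds exactly one $d$-simplex of $\T_P$, the multiplicity is $\tfrac12$, yielding net coefficient $1-\tfrac12=\tfrac12$; (c) if $\Delta^s_\pi(a)$ is maximal in~$\T_P$, i.e.\ neither adjacent $d$-simplex is in~$\T_P$, it contributes only to the first sum with coefficient~$1$. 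In each case, $D_b^a \widebar{\Delta^s_\pi(\zero)}$ has the same relative volume whether viewed as a facet of the diagonally scaled $d$-simplex or intrinsically, since $D_b^a$ preserves the diagonal structure.

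The main obstacle I foresee is purely bookkeeping: ensuring that the two incident $d$-dimensional alcoved simplices to a given $(d-1)$-dimensional alcoved facet are correctly identified in the standard simplex parametrization $\Delta^s_\pi(a)$, and that the characterization of $\delta(\Delta^s_\pi(a))$ via membership in $\Tr_d(P)$ versus $\acirc{P}$ matches the combinatorial count of adjacent $d$-simplices in~$\T_P$. Once this combinatorial translation is nailed down, the identity follows by collecting terms. No further estimates or deep structural results beyond Theorem~\ref{thm:tropEhrCoeffsRep} and the classical Ehrhart surface-area formula are needed.
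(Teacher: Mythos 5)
Your proposal is correct and follows essentially the same route as the paper's proof: specialize Theorem~\ref{thm:tropEhrCoeffsRep} to $i=d-1$, rewrite the $d$-dimensional contributions via the classical identity $c_{d-1}(\widebar{Q}) = \tfrac12\sum_F \rvol(F)$, and match the resulting multiplicities $0$, $\tfrac12$, $1$ to the three cases of $\delta$ exactly as the paper does. No gaps.
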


\begin{proof}
Specializing Theorem~\ref{thm:tropEhrCoeffsRep} to $i=d-1$ and in view of the remarks above, we have
\begin{align*}
c_{d-1}^b(P) &= \sum_{\substack{\Delta_\pi^s(a) \in \T_P\\ \dim(\Delta_\pi^s(a)) = d-1}} (b-1)^{d-1} \rvol(D_b^a\Delta^s_\pi(\zero))\\
&\phantom{=} - \sum_{\substack{\Delta_\pi^s(a) \in \T_P\\ \dim(\Delta_\pi^s(a)) = d}} (b-1)^{d-1} c_{d-1}(D_b^a \widebar{\Delta^s_\pi(\zero)}).
\end{align*}
The classical description of the second highest Ehrhart coefficient of a lattice polytope (cf.~\eqref{eqn:classical-c-d-1}) implies that for $d$-dimensional alcoved simplices $\Delta_\pi^s(a) \in \T_P$, we have $c_{d-1}(D_b^a \widebar{\Delta^s_\pi(\zero)}) = \frac12 \sum_{F} \rvol(F)$, where the sum runs over the facets $F$ of $D_b^a \widebar{\Delta^s_\pi(\zero)}$.
Observe that each of these facets corresponds to a $(d-1)$-dimensional alcoved simplex in $\T_P$ and hence it appears in the first part of the representation of $c_{d-1}^b(P)$.

More precisely, if the facet is contained in the interior $\acirc{P}$ of~$P$, then it is a facet of exactly two $d$-dimensional alcoved simplices in~$\T_P$, and so it doesn't contribute at all to $c_{d-1}^b(P)$.
If the facet $F$ is not contained in the interior, then it is a facet of exactly one alcoved simplex and it contributes $\frac12 (b-1)^{d-1} \rvol(F)$ to~$c_{d-1}^b(P)$.
\end{proof}

Based on this representation we can now prove that the tropical barycentric $(d-1)$-volumes bound the second highest tropical Ehrhart coefficient.

\begin{theorem}
\label{thm:Logcd-1-bound}
If $P \subseteq \TT^d$ is a tropical lattice polytope, then
\[
\tlvol_{d-1}^-(P) \leq \Log{c^b_{d-1}(P)} \leq \tlvol_{d-1}^+(P).
\]
\end{theorem}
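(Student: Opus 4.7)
The plan is to apply the $\Log$ map to the explicit representation of $c^b_{d-1}(P)$ from Lemma~\ref{lem:second+highest+trop+coeff} and match the result with the definitions of $\tlvol_{d-1}^\pm(P)$. Since all summands there are positive polynomials in~$b$, and the factors $\delta(\cdot)\in\{\tfrac12,1\}$ and $\tfrac{1}{(d-1)!}$ are multiplicative constants that $\Log$ ignores, the identity becomes
\[
\Log{c^b_{d-1}(P)} \;=\; (d-1) + \max \;\Log{\rvol(D_b^a \widebar{\Delta_\pi^s(\zero)})},
\]
where the maximum ranges over $(d-1)$-dimensional alcoved simplices $\Delta_\pi^s(a) \in \T_P$ with $\delta(\Delta_\pi^s(a)) > 0$. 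A direct computation---projecting $D_b^a \widebar{\Delta_\pi^s(\zero)}$ onto a coordinate hyperplane, which preserves relative volumes---yields $\rvol(D_b^a \widebar{\Delta_\pi^s(\zero)}) = \tfrac{1}{(d-1)!}\, b^{\sum_k a_k - a_{k_0}}$, where $k_0 \in [d]$ is determined by the dropped vertex: $k_0 = \pi(\ell)$ if the $=$-sign of $s$ sits at position $\ell \in \{1,\ldots,d\}$, and $k_0 = \pi(d)$ at position $d+1$.

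The crucial geometric observation linking the two sides is that the Log contribution $(d-1) + \sum_k a_k - a_{k_0}$ equals $\one^\intercal y - y_{k_0}$ at the top vertex $y$ of $\Delta_\pi^s(a)$ with respect to $\one^\intercal$ (which is $a+\one$ when $\ell \neq d+1$, and $a + \one - e_{\pi(d)}$ otherwise). The upper bound follows immediately: since $y \in \Tr_{d-1}(P)$ and $y_{k_0} \geq \min_k y_k$,
\[
(d-1) + \sum_k a_k - a_{k_0} = \one^\intercal y - y_{k_0} \leq \one^\intercal y - \min_k y_k \leq \tlvol_{d-1}^+(P),
\]
and taking the maximum over all contributing faces gives $\Log{c^b_{d-1}(P)} \leq \tlvol_{d-1}^+(P)$.

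For the lower bound, pick $x^* \in \Tr_{d-1}(P)$ attaining $\tlvol_{d-1}^-(P) = \one^\intercal x^* - \max_k x^*_k$, let $k^* = \arg\max_k x^*_k$, and choose a $d$-dimensional alcoved simplex $\Delta_\pi(a) \in \T_P$ containing $x^*$ (the case where $x^*$ lies on a $(d-1)$-dimensional tentacle is treated analogously). A linear-programming argument on $\Delta_\pi(a)$ shows that $\max_{y \in \Delta_\pi(a)}[\one^\intercal y - y_{k^*}]$ equals $(d-1) + \sum_k a_k - a_{k^*}$ and is attained at $y = a+\one$; maximality of $x^*$ forces this value to equal $\tlvol_{d-1}^-(P)$ and $k^*$ to coincide with $\arg\max_k a_k$. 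The $(d-1)$-face of $\Delta_\pi(a)$ with index $k_0 = k^*$ then has Log contribution precisely $\tlvol_{d-1}^-(P)$: if it lies on $\partial P$ we are done, and the main obstacle is the interior case, where one walks across adjacent alcoved simplices---each shared facet carrying the same Log value on both sides---to eventually reach a boundary facet with at least the same Log contribution, exploiting the adjacency combinatorics of $\T_P$ and its finiteness.
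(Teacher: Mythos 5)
Your skeleton---apply $\Log$ to the formula in Lemma~\ref{lem:second+highest+trop+coeff} and compare each simplex's contribution with the value of the relevant functional at its top vertex---is the same as the paper's, but two steps do not hold up. First, the exact formula $\rvol(D_b^a\widebar{\Delta_\pi^s(\zero)})=\tfrac{1}{(d-1)!}b^{\sum_k a_k-a_{k_0}}$ with $k_0=\pi(\ell)$ is wrong when the `$=$' sits at an interior position $\ell\in\{2,\ldots,d\}$: projecting onto a coordinate hyperplane does \emph{not} preserve relative volume there, because the induced map on $\Z^d\cap\lin(D_b^a\widebar{\Delta_\pi^s(\zero)}-a)$ lands in a sublattice of $\Z^{d-1}$ of index $b^{\card{a_{\pi(\ell-1)}-a_{\pi(\ell)}}}$. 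Computing $\vol_{d-1}$ via~\eqref{eqn:voliDbaDelta} and the lattice determinant separately (the primitive lattice vector in the merged direction is $b^{a_{\pi(\ell-1)}-m}e_{\pi(\ell-1)}+b^{a_{\pi(\ell)}-m}e_{\pi(\ell)}$ with $m$ the minimum of the two exponents) gives the exponent $\sum_k a_k-\max\{a_{\pi(\ell-1)},a_{\pi(\ell)}\}$. Your value overestimates the contribution whenever $a_{\pi(\ell-1)}>a_{\pi(\ell)}$; the upper bound survives, since the corrected exponent is still at most $\one^\intercal y-\min_k y_k$ at the top vertex, but your lower bound uses the formula as an equality.

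Second, and more seriously, the lower bound is not proved in the interior case, which is the actual crux. You find a full-dimensional $\Delta_\pi(a)$ whose facet adjacent to $k^*$ has Log-contribution at least $\tlvol_{d-1}^-(P)$, but if that facet lies in $\acirc{P}$ then $\delta=0$ and it contributes nothing to $c_{d-1}^b(P)$. The proposed fix---walking across adjacent simplices until a boundary facet is reached---is precisely the missing argument: when you pass to a neighbouring simplex $\Delta_{\pi'}(a')$, its \emph{other} facets have contributions computed from $a'$, and nothing you write rules out that every facet reachable this way with $\delta>0$ has a strictly smaller contribution; there is no monotonicity along such a walk, and the finiteness of $\T_P$ alone gives nothing. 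The paper avoids this entirely by first maximizing the concave functional $x\mapsto\min\{v^\intercal x\}$ over $\Tr_{d-1}(P)$, observing that the maximum is attained on a $(d-1)$-simplex with nonzero $\delta$, and then proving the one-sided estimate~\eqref{eqn:valrvol} for that simplex via a Hadamard-type lower bound on $\rvol$ (which also sidesteps the need for your exact volume formula). You would need either to adopt that route or to supply a genuine argument for the walk.
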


\begin{proof}
Our arguments are based on the representation of $c_{d-1}^b(P)$ given in Lemma~\ref{lem:second+highest+trop+coeff}.
We start with the claimed lower bound.
As a minimum of linear functions, the function
\[
x \mapsto \min\left\{ v^\intercal x : v \in \{0,1\}^d, \one^\intercal v = d-1 \right\}
\]
attains its maximum over $\Tr_{d-1}(P)$ at a boundary point and thus on a $(d-1)$-dimensional alcoved simplex $\Delta_\pi^s(a) \in \T_P$ that has a non-zero contribution to $c_{d-1}^b(P)$.
Since the boundary of the $(d-1)$-trunk of~$P$ is triangulated by the closures of those $\Delta_\pi^s(a)$, it suffices to show that for these simplices
\begin{align}
\Log{\rvol(D_b^a\Delta^s_\pi(\zero))} + d-1 &\geq \max_{x \in \Delta_\pi^s(a)} \min\Big\{ v^\intercal x : v \in \{0,1\}^d, \one^\intercal v = d-1 \Big\}.\label{eqn:valrvol}
\end{align}
First of all, by symmetry we only need to consider $\pi=id$.
In order to compute the relative volume of $D_b^a\Delta^s(\zero)$, we note that there are indices $0 \leq j_0 < j_1 < \ldots < j_{d-1} \leq d$ such that the closure of $\Delta^s(\zero)$ is given by
\[
\overline{\Delta^s(\zero)} = \conv\left\{e_{[j_0]},e_{[j_1]},\ldots,e_{[j_{d-1}]}\right\},
\]
where $e_{[j]} = e_1 + \ldots + e_j$ and $e_{[0]} = \zero$.
The linear subspace parallel to the affine span of $D_b^a \Delta^s(\zero)$ is thus given by
\begin{align*}
L_b^a(s) &= \lin\left\{ D_b^a(e_{[j_1]} - e_{[j_0]}),D_b^a(e_{[j_2]} - e_{[j_0]}),\ldots,D_b^a(e_{[j_{d-1}]} - e_{[j_0]}) \right\}\\
&= \lin\left\{ D_b^a(e_{[j_1]} - e_{[j_0]}),D_b^a(e_{[j_2]} - e_{[j_1]}),\ldots,D_b^a(e_{[j_{d-1}]} - e_{[j_{d-2}]}) \right\}.
\end{align*}
The determinant of the $(d-1)$-dimensional sublattice $\Z^d \cap \aff(D_b^a\Delta^s(\zero))$ can be estimated by
\begin{align}
\det\left(\Z^d \cap \aff(D_b^a\Delta^s(\zero))\right) = \det\left(\Z^d \cap L_b^a(s)\right) \leq \det\left(V^\intercal V\right)^{\frac12},\label{eqn:sublatticebound}
\end{align}
where $V \in \Z^{d \times (d-1)}$ is any matrix whose columns $\{v_1,\ldots,v_{d-1}\}$ are linearly independent vectors from $\Z^d \cap L_b^a(s)$.
Note that
\[
D_b^a(e_{[j_l]} - e_{[j_{l-1}]}) = b^{a_{j_{l-1}+1}} e_{j_{l-1}+1} + \ldots + b^{a_{j_l}} e_{j_l} =: \overline{v_l},
\]
so that $v_l := b^{-\min\{a_{j_{l-1}+1},\ldots,a_{j_l}\}} \cdot \overline{v_l} \in \Z^d \cap L_b^a(s)$, for every $l=1,\ldots,d-1$.
Here we used that $P$ is a tropical lattice polytope, and thus $a \in \Z^d_{\geq0}$.

Before we use~\eqref{eqn:sublatticebound} to estimate the determinant of said sublattice, we observe that $(d-1)!\cdot\vol_{d-1}(D_b^a\Delta^s(\zero))$ equals the $(d-1)$-volume of the parallelepiped spanned by $D_b^a(e_{[j_1]} - e_{[j_0]}),D_b^a(e_{[j_2]} - e_{[j_0]}),\ldots,D_b^a(e_{[j_{d-1}]} - e_{[j_0]})$, which in turn equals the $(d-1)$-volume of the parallelepiped~$Q_{d-1}$ spanned by $\overline{v_1},\ldots,\overline{v_{d-1}}$.
We have, $\overline{v}_l^\intercal \overline{v}_k = 0$, for $l \neq k$, and $\overline{v}_l^\intercal \overline{v}_l = \sum_{r=j_{l-1}+1}^{j_l} b^{2a_r}$.
Hence, $\overline{V}^\intercal \overline{V}$ is a diagonal matrix and evaluating its determinant gives the following formula that we record for later use
\begin{align}
\vol_{d-1}(D_b^a \Delta^s(\zero)) &= \frac{1}{(d-1)!} \prod_{t=0}^{d-2} \left( \sum_{\ell=j_t+1}^{j_{t+1}} b^{2 a_\ell} \right)^{\frac12}.\label{eqn:voliDbaDelta}
\end{align}
Now, using~\eqref{eqn:sublatticebound} for the matrix $V=(v_1,\ldots,v_{d-1})$, we get
\begin{align*}
\det\left(\Z^d \cap L_b^a(s)\right) &\leq \prod_{l=1}^{d-1} b^{-\min\{a_{j_{l-1}+1},\ldots,a_{j_l}\}} \det(\overline{V}^\intercal \overline{V})^{\frac12}\\
&= b^{-\sum_{l=1}^{d-1} \min\{a_{j_{l-1}+1},\ldots,a_{j_l}\}} \vol_{d-1}(Q_{d-1}).
\end{align*}
Putting things together we arrive at the following lower bound on the relative volume of $D_b^a\Delta^s(\zero)$:
\[
\rvol(D_b^a\Delta^s(\zero)) = \frac{\vol_{d-1}(D_b^a\Delta^s(\zero))}{\det(\Z^d \cap \aff(D_b^a\Delta^s(\zero)))} \geq \tfrac{1}{(d-1)!} \cdot b^{\sum_{l=1}^{d-1} \min\{a_{j_{l-1}+1},\ldots,a_{j_l}\}}.
\]
Now, the map $\Log{\cdot}$ is monotone in the sense that $\Log{f} \geq \Log{g}$ whenever $\card{f(b)} \geq \card{g(b)}$ for all $b \in \N$.
Therefore,
\begin{align*}
\Log{\rvol(D_b^a\Delta^s(\zero))} &\geq \Log{\tfrac{1}{(d-1)!} \cdot b^{\sum_{l=1}^{d-1} \min\{a_{j_{l-1}+1},\ldots,a_{j_l}\}}}\\
&= \sum_{l=1}^{d-1} \min\{a_{j_{l-1}+1},\ldots,a_{j_l}\},
\end{align*}
and so for~\eqref{eqn:valrvol} it suffices to show that
\begin{align}
&\phantom{=} \sum_{l=1}^{d-1} \min\{a_{j_{l-1}+1},\ldots,a_{j_l}\} + d-1 \label{eqn:valrvol2nd}\\
&\geq \max_{x \in \Delta^s(a)} \min\left\{ v^\intercal x : v \in \{0,1\}^d, \one^\intercal v = d-1 \right\}.\nonumber
\end{align}
The maximum on the right hand side is attained at a vertex of $\Delta^s(a)$, that is, at a point of the form $a + e_{[j_l]}$, $l=1,\ldots,d-1$.
It thus evaluates to
\begin{align*}
&\phantom{=} \max_{l=1,\ldots,d-1} \,\min\left\{ v^\intercal (a+e_{[j_l]}) : v \in \{0,1\}^d, \one^\intercal v = d-1 \right\}\\
&= \min\left\{ v^\intercal (a+e_{[j_{d-1}]}) : v \in \{0,1\}^d, \one^\intercal v = d-1 \right\}.
\end{align*}
Since $j_0 < \ldots < j_{d-1}$, this implies~\eqref{eqn:valrvol2nd} and thus the claimed lower bound on $\Log{c_{d-1}^b(P)}$.

\smallskip

We now prove the upper bound.
%
First note that the determinant of a $(d-1)$-dimensional sublattice~$L$ of $\Z^d$ is at least~$1$.
Indeed, there always exists a non-zero vector $u \in \Z^d$ such that $\det(L) = \|u\| \geq 1$ (cf.~\cite[Cor.~1.3.5]{martinet2003perfect}).

Now, let us consider an alcoved simplex $\Delta_\pi^s(a) \in \T_P$ with $\dim(\Delta_\pi^s(a))=d-1$.
Again by symmetry, we can concentrate on $\pi=id$.
As before, we find indices $0 \leq j_0 < j_1 < \ldots < j_{d-1} \leq d$ such that
\[
\overline{\Delta^s(\zero)} = \conv\left\{e_{[j_0]},e_{[j_1]},\ldots,e_{[j_{d-1}]}\right\}.
\]
The identity~\eqref{eqn:voliDbaDelta} yields
\begin{align*}
\rvol(D_b^a\Delta^s(\zero)) &= \frac{\vol_{d-1}(D_b^a\Delta^s(\zero))}{\det(\Z^d \cap \aff(D_b^a\Delta^s(\zero)))} \leq \vol_{d-1}(D_b^a\Delta^s(\zero))\\
&= \frac{1}{(d-1)!} \prod_{t=0}^{d-2} \left( \sum_{\ell=j_t+1}^{j_{t+1}} b^{2 a_\ell} \right)^{\frac12}.
\end{align*}
Therefore,
\begin{align*}
&\ \Log{(b-1)^{d-1} \rvol(D_b^a\Delta_\pi^s(\zero))} \leq d-1 + \sum_{t=0}^{d-2} \max\{a_{j_t+1},\ldots,a_{j_{t+1}}\}\\
&\leq \max_{x \in \Delta_\pi^s(a)} \max\left\{v^\intercal x : v \in \{0,1\}^d, \one^\intercal v = d-1\right\} \leq \tlvol_{d-1}^+(P).
\end{align*}
%
%
Since $\Log{f+g} \leq \max\{\Log{f},\Log{g}\}$, the formula in Lemma~\ref{lem:second+highest+trop+coeff} gives us
\begin{align*}
\Log{c_{d-1}^b(P)} &\leq \max_{\substack{\Delta_\pi^s(a) \in \T_P\\ \dim(\Delta_\pi^s(a)) = d-1}} \Log{\delta(\Delta_\pi^s(a)) \cdot (b-1)^{d-1} \rvol(D_b^a\Delta^s_\pi(\zero))} \\
&\leq \tlvol_{d-1}^+(P),
\end{align*}
finishing the proof.
\end{proof}

\begin{remark} \quad \label{rem:behaviour+lower+dim+vol}
\begin{enumerate}[i)]
 \item \label{discrepancy+lower+upper} The inequality in Theorem~\ref{thm:Logcd-1-bound} can be strict.
Indeed, consider the matrix $M = \left( \begin{array}{ccc} \ell-1 & \ell & k+\ell \\ 0 & 0 & k+1 \end{array} \right)$ and observe that by the computations in Example~\ref{ex:triangle+edge}, the tropical lattice polygon $P = \tconv(M)$ has parameters 
\[
\left(\tlvol_1^-(P),\Log{c_1^b(P)},\tlvol_1^+(P)\right) = \Big(k+1,\max\{\ell,k+1\},k+\ell\Big) \enspace .
\]

 \item The assumption that the generating matrix $M$ of~$P$ has only non-negative entries cannot be dropped in general.
 For instance, consider the tropical triangle~$\Delta_2$ with defining matrix $M = \left( \begin{array}{ccc} 1 & 0 & -1 \\ 1 & -1 & 0 \end{array} \right)$.
 We find that
 \[
\tlvol_1^-(\Delta_2) = 1 \quad \text{and} \quad \Log{c_1^b(\Delta_2)} = 0.
\]
 \item It suffices however to make the weaker assumption $\Tr_{d-1}(P) \subseteq \R^d_{\geq0}$.
 The argument is based on the quasi-polynomiality of the counting function $\TL_P^b(k)$, for tropical polytopes~$P$ with a not necessarily non-negative integral defining matrix (cf.~Remark~\ref{rem:quasi-polynomials}).
\end{enumerate}
\end{remark}

\subsection{Tropical \texorpdfstring{$i$}{i}-minors}

In this part, we aim to extend Theorem~\ref{thm:tvol_qtvol} in order to give an upper estimate for the tropical lower barycentric $i$-volume in terms of tropical analogs of $i$-minors of the defining matrix~$M$ of~$P$.

\begin{definition}[Maximal tropical $i$-minor]
Let $M \in \TT^{d \times m}$ be a tropical matrix and let $i \in \{1,2,\ldots,\min\{d,m\}\}$.
We define the \emph{maximal tropical $i$-minor} of~$M$ as
\[
\tminor_i(M) := \max_{I \in \binom{[d]}{i}, J \in \binom{[m]}{i}} \tdet(M_{I,J}),
\]
where $M_{I,J}$ is the $i\times i$ submatrix of $M$ whose rows are indexed by $I$ and whose columns are indexed by $J$.
\end{definition}

We need a generalization of~\cite[Prop.~15]{DepersinGaubertJoswig:2017} to all maximal tropical $i$-minors.
In order to state it, we record that they call a matrix $M \in \TT^{d \times m}$ \emph{tropically sign-generic} if for each $J \in \binom{[m]}{d}$ all permutations attaining $\tdet(M_J)$ have the same sign.

\begin{lemma}
\label{lem:tmi-monotone}
Let $A \in \TT^{d \times m}$ and $B \in \TT^{d \times n}$ be such that $\tconv(A) \subseteq \tconv(B)$.
If there are $I \in \binom{[d]}{i}$ and $J \in \binom{[m]}{i}$ such that $A_{I,J}$ is tropically sign-generic and $\tminor_i(A) = \tdet(A_{I,J})$, then $\tminor_i(A) \leq \tminor_i(B)$.
\end{lemma}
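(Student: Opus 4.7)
The plan is to reduce this statement to the square-matrix version that is already established as~\cite[Prop.~15]{DepersinGaubertJoswig:2017}, corresponding to the case $i=d$. The key tool will be the coordinate projection $\pi_I \colon \TT^d \to \TT^i$ onto the rows indexed by~$I$. Because the tropical operations $\oplus$ and $\odot$ act coordinatewise, this projection is tropically linear and commutes with taking tropical convex hulls: for any matrix $X$ whose rows live in $\TT^d$ one has $\pi_I(\tconv(X)) = \tconv(X_{I,\cdot})$. Applied to our hypothesis, the inclusion $\tconv(A) \subseteq \tconv(B)$ descends to the inclusion $\tconv(A_{I,\cdot}) \subseteq \tconv(B_{I,\cdot})$ of tropical polytopes in $\TT^i$.

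Next I would verify that the hypotheses of~\cite[Prop.~15]{DepersinGaubertJoswig:2017} transfer to the projected matrices $A_{I,\cdot}$ and $B_{I,\cdot}$. Since $A_{I,J}$ is an $i \times i$ submatrix of $A_{I,\cdot}$, the obvious chain
\[
\tdet(A_{I,J}) \;\leq\; \tminor_i(A_{I,\cdot}) \;\leq\; \tminor_i(A) \;=\; \tdet(A_{I,J})
\]
shows that $A_{I,J}$ attains the maximum tropical $i$-minor of $A_{I,\cdot}$. It is also tropically sign-generic by assumption, and therefore satisfies the genericity requirement needed to invoke~\cite[Prop.~15]{DepersinGaubertJoswig:2017} for the $i \times m$ matrix $A_{I,\cdot}$ and the $i \times n$ matrix $B_{I,\cdot}$. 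The cited proposition then yields $\tminor_i(A_{I,\cdot}) \leq \tminor_i(B_{I,\cdot})$, and combining with the trivial bound $\tminor_i(B_{I,\cdot}) \leq \tminor_i(B)$ (any $i \times i$ submatrix of $B_{I,\cdot}$ is also a submatrix of $B$), the conclusion $\tminor_i(A) \leq \tminor_i(B)$ drops out.

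I do not expect a serious obstacle here: the heart of the matter is the commutation of~$\pi_I$ with tropical convex hull, which is a one-line verification from the definition~\eqref{eq:definition+tropical+polytope}. The only point that warrants care is checking that the sign-genericity hypothesis is preserved under passage from $A$ to $A_{I,\cdot}$, but this is automatic because sign-genericity of~$A_{I,J}$ is an intrinsic property of the square submatrix itself and survives the restriction to the row set~$I$.
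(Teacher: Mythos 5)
Your reduction is a genuinely different route from the paper's. The paper does not project; it writes $A = B \odot C$ with $C$ non-positive, applies the tropical Cauchy--Binet formula for bideterminants directly to the $i\times i$ submatrices, and uses sign-genericity of $A_{I,J}$ to cancel the ambiguous term $\card{A_{I,J}}^{-}$ — in other words, it re-runs the argument of \cite[Prop.~15]{DepersinGaubertJoswig:2017} at the level of $i$-minors rather than invoking that proposition as a black box. Your projection idea is sound as far as it goes: $\pi_I$ does commute with $\tconv$ (tropical combinations are coordinatewise), so $\tconv(A_{I,\cdot}) \subseteq \tconv(B_{I,\cdot})$, your chain of inequalities showing that $A_{I,J}$ attains $\tminor_i(A_{I,\cdot})$ is correct, and $\tminor_i(B_{I,\cdot}) \leq \tminor_i(B)$ is trivial. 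If the square case were available under the hypothesis you need, this would be a cleaner proof.

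The gap is in the final invocation. Proposition~15 of \cite{DepersinGaubertJoswig:2017}, as recalled just before the lemma, assumes that the \emph{whole matrix} is tropically sign-generic, i.e.\ that \emph{every} maximal square submatrix of $A_{I,\cdot}$ has all its optimal permutations of the same sign. You only know this for the single submatrix $A_{I,J}$; the other $i\times i$ submatrices $A_{I,J'}$ may well be singular in the tropical sense. This is not a pedantic distinction: the whole point of the lemma's weakened hypothesis is that the later application (Lemma~\ref{lem:trop-sign-gener-simplices} applied in Theorem~\ref{thm:tlvoli_tmi}) only produces \emph{one} sign-generic maximizing submatrix of the simplex matrix $S$, so you cannot upgrade your hypothesis. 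To close the gap you must observe that the \emph{proof} of Prop.~15 uses sign-genericity only of a submatrix attaining the maximum — but justifying that means opening up the Cauchy--Binet computation, at which point you have essentially reproduced the paper's argument (with the cosmetic simplification that the row set is all of $[i]$ after projecting). So: correct skeleton, but the citation as stated does not cover the case you need.
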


\begin{proof}
We follow the arguments in the proof of~\cite[Prop.~15]{DepersinGaubertJoswig:2017}.
First of all, since $\tconv(A) \subseteq \tconv(B)$, there exists a matrix $C \in \TT^{n \times m}$ with non-positive entries such that $A = B \odot C$.
Indeed, this is a compact way to write that every column of $A$ is a tropical convex combination of the columns of~$B$.

Let $I \in \binom{[d]}{i}$ and $J \in \binom{[m]}{i}$ have the assumed properties.
Then, by the tropical analog of the Cauchy-Binet formula (cf.~\cite[Thm.~5.4]{PoplinHartwig:2004} or~\cite[Ex.~3.7]{AkianGaubertGuterman:2009}), we have
\begin{align}
&\ \card{A_{I,J}}^+ \oplus \max_{K \in \binom{[n]}{i}} \max\left\{\card{B_{I,K}}^+ + \card{C_{K,J}}^-,\card{B_{I,K}}^- + \card{C_{K,J}}^+\right\}\nonumber\\
&= \card{A_{I,J}}^- \oplus \max_{K \in \binom{[n]}{i}} \max\left\{\card{B_{I,K}}^+ + \card{C_{K,J}}^+,\card{B_{I,K}}^- + \card{C_{K,J}}^-\right\},\label{eqn:tropCauchyBinet}
\end{align}
where the pair
\[
\card{M}^+ = \max_{\substack{\sigma \in S_i\\ \sgn(\sigma)=1}}\sum_{l=1}^i m_{l\sigma(l)} \quad \text{and} \quad \card{M}^- = \max_{\substack{\sigma \in S_i\\\sgn(\sigma)=-1}}\sum_{l=1}^i m_{l\sigma(l)},
\]
forms the \emph{bideterminant} $(\card{M}^+,\card{M}^-)$ of $M \in \TT^{i \times i}$.
We write $\card{M}^\pm$ in order to refer to either of the two components of the bideterminant.

Now, since $A_{I,J}$ is tropically sign-generic, we have $\card{A_{I,J}}^+ \neq \card{A_{I,J}}^-$, and thus by~\eqref{eqn:tropCauchyBinet}
\[
\tdet(A_{I,J}) \leq \max_{K \in \binom{[n]}{i}} \max\left\{\card{B_{I,K}}^\pm + \card{C_{K,J}}^\pm\right\} \leq \max_{K \in \binom{[n]}{i}} \tdet(B_{I,K}),
\]
where we also used that the entries of~$C$ are non-positive.
Therefore,
\[
\tminor_i(A) = \tdet(A_{I,J}) \leq \max_{I \in \binom{[d]}{i}, K \in \binom{[n]}{i}}\tdet(B_{I,K}) = \tminor_i(B),
\]
as desired.
\end{proof}

\begin{lemma}
\label{lem:trop-sign-gener-simplices}
Let $\pi \in S_d$ be a permutation, let $0 \leq j_0 < j_1 < \ldots < j_i \leq d$ be indices, and let $S \in \TT^{d \times (i+1)}$ be the matrix with columns $S_l = e^\pi_{[j_l]}$, for $l=0,1,\ldots,i$.
Then, there are $I \in \binom{[d]}{i}$ and $J \in \binom{[i+1]}{i}$ such that $\tminor_i(S) = \tdet(S_{I,J})$ and $S_{I,J}$ is tropically sign-generic.
\end{lemma}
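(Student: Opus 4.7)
The plan is to produce an explicit sign-generic $i \times i$ submatrix whose tropical determinant already realises the maximal tropical $i$-minor. All entries of $S$ are either $0$ or $-\infty$ (the tropical additive identity), so every tropical determinant of an $i \times i$ submatrix is at most $0$; hence it suffices to exhibit $I,J$ with $\tdet(S_{I,J})=0$ and with sign-genericity.

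First I would reduce to the case $\pi=\mathrm{id}$. Permuting the rows of $S$ by $\pi^{-1}$ just replaces the sign of each permutation used in the bideterminant by the same factor $\sgn(\pi)$, so it swaps $|\cdot|^+$ and $|\cdot|^-$ simultaneously across all submatrices. This preserves both the tropical determinant and the property of being tropically sign-generic. After this reduction the entry $s_{k,l}$ of $S$ equals $0$ iff $k \leq j_l$, and $-\infty$ otherwise.

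Next, I would take $J=\{2,3,\ldots,i+1\}$ (i.e.\ discard the first column, which in the case $j_0=0$ is identically $-\infty$) and
\[
I := \{\, j_0+1,\, j_1+1,\, \ldots,\, j_{i-1}+1 \,\}.
\]
The strict nesting $j_0 < j_1 < \cdots < j_{i-1}$ ensures that $I$ consists of $i$ distinct indices, and $j_{i-1}+1 \leq j_i \leq d$ ensures $I \subseteq [d]$. The $(k,l)$ entry of $S_{I,J}$ is $s_{j_{k-1}+1,\, l}$, which equals $0$ precisely when $j_{k-1}+1 \leq j_l$, i.e.\ when $k \leq l$. Hence $S_{I,J}$ is upper triangular with $0$'s on and above the diagonal and $-\infty$ strictly below.

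Finally I would read off sign-genericity: for any non-identity $\sigma \in S_i$ there is some $l$ with $\sigma(l)<l$, producing a $-\infty$ entry in the product. Thus $\tdet(S_{I,J})=0$ is attained uniquely by the identity permutation, so $S_{I,J}$ is tropically sign-generic, and combined with the upper bound $\tminor_i(S) \leq 0$ this yields $\tminor_i(S)=\tdet(S_{I,J})$. I do not anticipate a substantive obstacle here; the only subtle point is verifying that the reduction to $\pi=\mathrm{id}$ respects the sign-generic property, which follows directly from the Cauchy--Binet-style bookkeeping of the bideterminant.
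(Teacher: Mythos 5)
Your overall strategy --- reduce to $\pi=\mathrm{id}$, drop the first column, and pick rows so that the resulting $i\times i$ submatrix is triangular with the tropical determinant uniquely attained by the identity --- is exactly the paper's strategy, and your choice of $I$ and $J$ does produce such a submatrix. But the proof as written rests on a false premise: the columns $e^\pi_{[j_l]} = e_{\pi(1)}+\ldots+e_{\pi(j_l)}$ are ordinary sums of standard basis vectors, so the entries of $S$ are the real numbers $0$ and $1$, not $0$ and $-\infty$. (Here $0$ is the tropical \emph{multiplicative} unit, not the additive one; in particular the column for $j_0=0$ is the all-zeroes vector $\zero$, not an all-$(-\infty)$ column.) Consequently several of your stated claims are wrong: $\tminor_i(S)$ is not bounded above by $0$ and does not equal $0$ --- it equals $i$ --- and $S_{I,J}$ is upper triangular with $1$'s on and above the diagonal and $0$'s strictly below, not with $-\infty$'s below.

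The error is entirely in the bookkeeping of the entries, not in the structure of the argument. Replacing ``$-\infty$'' by ``$0$'' and ``$0$'' by ``$1$'' throughout, the a priori bound becomes $\tminor_i(S)\le i$ (every entry is at most $1$), the identity permutation gives the value $i$ on your $S_{I,J}$, and any $\sigma\neq\mathrm{id}$ has some $k$ with $\sigma(k)<k$, hence picks up a $0$ in place of a $1$ and yields a sum at most $i-1$. So $\tdet(S_{I,J})=i=\tminor_i(S)$ is uniquely attained by the identity and $S_{I,J}$ is tropically sign-generic. With that correction your argument coincides with the paper's, which takes $I=\{j_1,\ldots,j_i\}$ instead of your $\{j_0+1,\ldots,j_{i-1}+1\}$; both choices yield the same triangular pattern. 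Your justification of the reduction to $\pi=\mathrm{id}$ is fine.
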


\begin{proof}
First of all, the statement and in particular $\tminor_i(S)$ is invariant under permutations of the rows of $S$.
Thus, we may assume that $\pi=id$.
Second, $\tminor_i(S) = i$ and it is attained by the $i \times i$-matrix arising from~$S$ after deleting the first column and keeping the rows corresponding to $j_1,\ldots,j_i$.
More precisely, $\tminor_i(S) = \tdet(S_{I,J})$ for $I=\{j_1,\ldots,j_i\}$ and $J=[i+1] \setminus \{1\}$.
Furthermore, $S_{I,J}$ is an upper triangular matrix with $1$'s on the diagonal.
Thus, $\tdet(S_{I,J})$ is uniquely attained by the identity permutation and so $S_{I,J}$ is tropically sign-generic.
\end{proof}

In view of~\eqref{eqn:DGJ} and the identity $\tlvol(P) = \Log{c^b_d(P)}$, the following extends Theorem~\ref{thm:tvol_qtvol} to all tropical lower barycentric $i$-volumes.

\begin{theorem}
\label{thm:tlvoli_tmi}
Let $M \in \TT\N^{d \times m}$ and let $P = \tconv(M)$ be the corresponding tropical lattice polytope.
Then, for every $i \in [d]$, we have
\[
\tlvol_i^-(P) \leq \tminor_i(M).
\]
\end{theorem}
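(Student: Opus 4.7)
Plan: Let $x^* \in \Tr_i(P)$ be a maximizer of the concave piecewise-linear function $f(x) := \min\{v^\intercal x : v \in \{0,1\}^d,\,\one^\intercal v = i\}$, so that $\tlvol_i^-(P) = f(x^*)$. The idea is to situate $x^*$ inside a well-chosen alcoved sub-simplex of the covector decomposition, and then to bound $f(x^*)$ by the tropical determinant of a tropically sign-generic $i \times i$ submatrix that can be controlled via the Cauchy-Binet inequality extracted from the proof of Lemma~\ref{lem:tmi-monotone}.

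Since $x^* \in \Tr_i(P)$, there is a cell $G \in \F_P$ of dimension at least~$i$ with $x^* \in \overline{G}$. Triangulating $\overline{G}$ into alcoved simplices (in the sense of Section~\ref{sect:alcovedtriang}) yields an alcoved simplex $T = \Delta_\pi^s(a) \in \T_P$ of dimension at least~$i$ whose closure contains $x^*$, with vertices $a + e^\pi_{[k_l]}$ for $l = 0, \ldots, \dim T$ and some indices $0 \leq k_0 < k_1 < \ldots < k_{\dim T} \leq d$. Pick any $(i+1)$-subset $\{j_0 < j_1 < \ldots < j_i\}$ of $\{k_0,\ldots,k_{\dim T}\}$ and form the matrix $S \in \TT^{d \times (i+1)}$ whose $l$-th column is $a + e^\pi_{[j_l]}$; the columns of $S$ are pseudovertices of $P$, hence $\tconv(S) \subseteq \tconv(M)$.

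Let $E \in \TT^{d \times (i+1)}$ be the matrix with columns $e^\pi_{[j_l]}$, so that $S_{rl} = a_r + E_{rl}$ throughout. Lemma~\ref{lem:trop-sign-gener-simplices} applied to $E$ produces indices $I := \{\pi(j_1),\ldots,\pi(j_i)\}$ and $J := \{2,\ldots,i+1\}$ with $\tdet(E_{I,J}) = i$ uniquely attained by the identity permutation. Shifting row $r$ of $E$ by $a_r$ translates every permutation's summand in the tropical determinant by the common constant $\sum_{r \in I} a_r$, so $\tdet(S_{I,J}) = i + \sum_{r \in I} a_r$ and the unique optimizer persists, making $S_{I,J}$ tropically sign-generic. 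The intermediate Cauchy-Binet inequality from the proof of Lemma~\ref{lem:tmi-monotone}---that $\tdet(A_{I,J}) \leq \tminor_i(B)$ whenever $\tconv(A) \subseteq \tconv(B)$ and $A_{I,J}$ is tropically sign-generic---applies directly to the pair $(S,M)$ and yields $\tdet(S_{I,J}) \leq \tminor_i(M)$.

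To conclude, note that $x^* - a \in \overline{\Delta_\pi^s(\zero)} \subseteq [0,1]^d$, so $x^*_r \leq a_r + 1$ for every $r \in [d]$. Since the indicator vector $\one_I \in \{0,1\}^d$ of $I$ satisfies $\one^\intercal \one_I = i$, it is a feasible choice for $v$ in the minimization defining $f$, whence
\[
\tlvol_i^-(P) = f(x^*) \leq \one_I^\intercal x^* = \sum_{r \in I} x^*_r \leq i + \sum_{r \in I} a_r = \tdet(S_{I,J}) \leq \tminor_i(M).
\]
The main subtlety is that Lemma~\ref{lem:tmi-monotone} as literally stated presupposes $\tminor_i(S) = \tdet(S_{I,J})$, which we do not verify for our specific $(I,J)$; however only the intermediate inequality from its proof is needed, and it holds verbatim from the tropical sign-genericity alone.
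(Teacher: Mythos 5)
Your proposal is correct and follows essentially the same route as the paper's proof: localize to an alcoved simplex of the triangulation containing the maximizer, invoke Lemma~\ref{lem:trop-sign-gener-simplices} to produce a sign-generic $i\times i$ submatrix of the vertex matrix $S$, push its tropical determinant into $\tminor_i(M)$ via the Cauchy--Binet argument of Lemma~\ref{lem:tmi-monotone}, and bound the min-functional by the diagonal sum $i+\sum_{r\in I}a_r$. Your closing remark is well taken — the paper applies Lemma~\ref{lem:tmi-monotone} to the translated matrix $S$ without verifying its $\tminor_i(S)=\tdet(S_{I,J})$ hypothesis there, and extracting only the intermediate inequality, as you do, is the cleaner way to close that small gap.
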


\begin{proof}
The $i$-trunk of $P$ is the union of all $(\geq i)$-dimensional alcoved simplices occurring in the covector decomposition of~$P$.
If $\Tr_i(P) = \emptyset$, then $\tlvol_i^-(P) = -\infty$ and there is nothing to prove.
We thus assume otherwise, and we let $\Delta_\pi^s(a) \subseteq \Tr_i(P)$ be an alcoved simplex with $\dim(\Delta_\pi^s(a)) \geq i$.
Of course, it suffices to show that
\begin{align}
\max_{x \in \Delta_\pi^s(a)} \,\min\left\{ v^\intercal x : v \in \{0,1\}^d, \one^\intercal v = i \right\} &\leq \tminor_i(M).\label{eqn:tlvoli-vs-tmi-reduction}
\end{align}
Observe that the maximum on the left hand side is attained at a boundary point of $\Delta_\pi^s(a)$, so that we can assume without loss of generality that $\dim(\Delta_\pi^s(a)) = i$.
There are indices $0 \leq j_0 < j_1 < \ldots < j_i \leq d$ such that
\[
\Delta_\pi^s(a) = a + \conv\left\{e^\pi_{[j_0]},e^\pi_{[j_1]},\ldots,e^\pi_{[j_i]}\right\},
\]
where $e^\pi_{[l]} = e_{\pi(1)} + \ldots + e_{\pi(l)}$.
Let $S \in \TT^{d \times (i+1)}$ be the matrix whose columns correspond to the $i+1$ vertices of $\Delta_\pi^s(a)$.

Combining Lemma~\ref{lem:tmi-monotone}, Lemma~\ref{lem:trop-sign-gener-simplices}, and $\tconv(S) = \Delta_\pi^s(a) \subseteq P = \tconv(M)$, we see that $\tminor_i(S) \leq \tminor_i(M)$.
Thus, for~\eqref{eqn:tlvoli-vs-tmi-reduction} it suffices to show
\begin{align*}
\max_{x \in \Delta_\pi^s(a)} \,\min\left\{ v^\intercal x : v \in \{0,1\}^d, \one^\intercal v = i \right\} &\leq \tminor_i(S).
\end{align*}
To this end, we first observe that by symmetry we may assume that $\pi=id$ and that $j_0=0$.
Moreover, the maximum on the left hand side is attained at the point $\widebar a = a + e_{[j_i]}$, since every $x \in \Delta^s(a)$ is coordinate-wise dominated by $\widebar a$ and because the function $x \mapsto v^\intercal x$ is non-decreasing with respect to this partial order.

Now, the $r^{th}$ coordinate of $\widebar a$ is given by $\widebar a_r = a_r + 1$, if $r \leq j_i$, and $\widebar a_r = a_r$, if $r > j_i$.
Therefore, $\widebar a_{j_l} = S_{j_l,l}$ for every $1 \leq l \leq i$.
For $\widebar v \in \{0,1\}^d$ defined by $\widebar v_r = 1$ if and only if $r \in \{j_1,\ldots,j_i\}$, we thus obtain
\[
\max_{x \in \Delta^s(a)} \,\min\left\{ v^\intercal x : v \in \{0,1\}^d, \one^\intercal v = i \right\} \leq \widebar v^\intercal \widebar a = \sum_{l=1}^i S_{j_l,l} \leq \tminor_i(S).\qedhere
\]
\end{proof}

\noindent For $i=1$ there is a more direct argument that gives a stronger result and allows to drop the integrality assumption:

\begin{proposition}
\label{prop:tlvol1-tm1}
Let $M \in \TT^{d \times m}$ and let $P = \tconv(M)$ be the corresponding tropical polytope.
Then
\[
\tlvol_1^-(P) \leq \tlvol_1^+(P) = \tminor_1(M),
\]
and equality holds if and only if $\tminor_1(M)\cdot\one$ is the tropical barycenter of~$P$.
\end{proposition}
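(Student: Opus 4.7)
The plan is to unwind the definitions using the equality $\Tr_1(P) = P$, which was recorded just after Definition~\ref{def:i+trunk}. Under the identification $v \in \{0,1\}^d$ with $\one^\intercal v = 1$ as a unit vector $e_i$, the two quantities become
\[
\tlvol_1^+(P) = \max_{x \in P} \max_{i \in [d]} x_i \quad\text{and}\quad \tlvol_1^-(P) = \max_{x \in P} \min_{i \in [d]} x_i,
\]
from which $\tlvol_1^-(P) \leq \tlvol_1^+(P)$ follows trivially from $\min \leq \max$.

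Next I would show $\tlvol_1^+(P) = \tminor_1(M) = \max_{i,j} M_{ij}$. The lower bound comes from the fact that every column of $M$ lies in $P$, so the column realising the maximal entry witnesses $\tlvol_1^+(P) \geq \tminor_1(M)$. For the upper bound, write an arbitrary $x \in P$ as a tropical convex combination $x = \bigoplus_j \lambda_j \odot M^{(j)}$ with $\bigoplus_j \lambda_j = 0$, hence $\lambda_j \leq 0$ for every $j$. Then $x_i = \max_j(\lambda_j + M_{ij}) \leq \max_j M_{ij} \leq \tminor_1(M)$, giving the matching upper bound.

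For the characterisation of equality, assume $\tlvol_1^-(P) = \tminor_1(M)$. This requires some $x \in P$ with $\min_i x_i \geq \tminor_1(M)$, but by the previous paragraph every coordinate of any point in $P$ is already at most $\tminor_1(M)$, forcing $x = \tminor_1(M)\cdot\one$. To identify this with the tropical barycenter I would invoke the componentwise description: the point $b := \bigoplus_j M^{(j)}$ is obtained by taking all $\lambda_j = 0$ in the defining combination, so $b \in P$; and the coordinatewise bound from the previous step shows $b_i = \max_j M_{ij}$ dominates every point in $P$ coordinate by coordinate. Hence $b$ is the (unique) tropical barycenter, and $\tminor_1(M)\cdot\one$ belongs to $P$ if and only if $b = \tminor_1(M)\cdot\one$, i.e., the tropical barycenter of $P$ coincides with $\tminor_1(M)\cdot\one$. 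Reversing the implication is direct: if $\tminor_1(M)\cdot\one$ is the barycenter, then it lies in $P$ and attains $\min_i x_i = \tminor_1(M)$.

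The argument is essentially definition-chasing; the only step that merits care is verifying that the componentwise maximum of a tropical polytope actually lies in the polytope and thus equals the tropical barycenter referenced in the proposition. This was already observed in the discussion preceding the corollary about pure tropical polytopes, so no genuine obstacle remains.
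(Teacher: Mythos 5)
Your proposal is correct and follows essentially the same route as the paper: both unwind the definitions via $\Tr_1(P)=P$, use the tropical convex combination representation with coefficients $\leq 0$ to get the upper bound $\tminor_1(M)$, and characterize equality through the componentwise-maximal point (the tropical barycenter) being $\tminor_1(M)\cdot\one$. The paper phrases the equality condition as ``each row of $M$ contains a maximal entry,'' but this is the same observation you make by noting that the barycenter $\bigoplus_j M^{(j)}$ must equal $\tminor_1(M)\cdot\one$.
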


\begin{proof}
First of all, $\tminor_1(M) = \max_{1 \leq i \leq d, 1 \leq j \leq m} M_{i,j}$ is just the maximal entry of~$M$.
Moreover, for every $x \in P=\tconv(M)$ there are coefficients $\gamma_1,\ldots,\gamma_m \in \TT$ with $\bigoplus_{j=1}^m \gamma_j = 0$ and $x = \bigoplus_{j=1}^m \gamma_j \odot M_{\cdot,j}$.
Since also $P=\Tr_1(P)$, we have
\begin{align*}
\tlvol_1^-(P) &= \max_{x \in P} \min_{1 \leq i \leq d} x_i\\
&= \max_{\gamma_1 \oplus \ldots \oplus \gamma_m = 0} \min_{1 \leq i \leq d} \max\{\gamma_1 + M_{i,1},\ldots,\gamma_m + M_{i,m}\}\\
&\leq \max_{\gamma_1 \oplus \ldots \oplus \gamma_m = 0} \max_{1 \leq i \leq d} \max\{\gamma_1 + M_{i,1},\ldots,\gamma_m + M_{i,m}\} = \tminor_1(M).
\end{align*}
Equality holds if and only if there exist coefficients $\gamma_1,\ldots,\gamma_m \in \TT$ with $\gamma_1 \oplus \ldots \oplus \gamma_m = 0$ such that
\[
\min_{1 \leq i \leq d} \max\{\gamma_1 + M_{i,1},\ldots,\gamma_m + M_{i,m}\} = \max_{1 \leq i \leq d, 1 \leq j \leq m} M_{i,j}.
\]
This happens if and only if each row $M_{i,\cdot}$ contains a maximal entry of~$M$.
The corresponding coefficients would just be $\gamma_1 = \ldots = \gamma_m = 0$.
In other words, the tropical barycenter of $P$ equals $\tminor_1(M)\cdot\one$.
\end{proof}

We conjecture that the maximal tropical $i$-minors also upper bound the corresponding tropical Ehrhart coefficients, and that the following analogous bound to Theorem~\ref{thm:tlvoli_tmi} holds:

\begin{conjecture}
Let $M \in \TT\N^{d \times m}$ and let $P = \tconv(M)$ be the corresponding tropical lattice polytope.
Then, for $i \in [d]$, we have
\[
\Log{c^b_i(P)} \leq \tminor_i(M).
\]
\end{conjecture}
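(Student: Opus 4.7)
The plan parallels the proof of Theorem~\ref{thm:tlvoli_tmi}, but carried out at the level of tropical Ehrhart coefficients instead of tropical barycentric $i$-volumes. From the decomposition in Theorem~\ref{thm:tropEhrCoeffsRep} together with the subadditivity $\Log{f+g} \leq \max\{\Log{f},\Log{g}\}$, the conjecture reduces to showing, for every alcoved simplex $\Delta_\pi^s(a) \in \T_P$ of dimension $m \geq i$, the estimate
\[
i + \Log{c_i(D_b^a \widebar{\Delta_\pi^s(\zero)})} \leq \tminor_i(M).
\]

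The heart of the argument is bounding the classical Ehrhart coefficient of a lattice polytope by its face relative volumes. The Berline--Vergne local Euler--Maclaurin formula expresses $c_i(Q) = \sum_{F : \dim F = i} \mu(F) \rvol_i(F)$ with $|\mu(F)|$ uniformly bounded by a constant depending only on the ambient dimension, so $\Log{c_i(Q)} \leq \max_F \Log{\rvol_i(F)}$. Specialising to $Q = D_b^a \widebar{\Delta_\pi^s(\zero)}$ and applying the explicit formula~\eqref{eqn:voliDbaDelta} to each $i$-face, which is itself an alcoved simplex indexed by a subsequence $j_{l_0} < \ldots < j_{l_i}$ of $j_0 < \ldots < j_m$, yields
\[
\Log{c_i(D_b^a \widebar{\Delta_\pi^s(\zero)})} \leq \max_{0 \leq l_0 < \ldots < l_i \leq m} \sum_{t=0}^{i-1} \max_{r \in (j_{l_t}, j_{l_{t+1}}]} a_{\pi(r)}.
\]
A short combinatorial argument identifies this maximum with the sum of the $i$ largest values among the interval maxima $\mu_l := \max_{r \in (j_{l-1}, j_l]} a_{\pi(r)}$, $l = 1, \ldots, m$.

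To close the chain, pick indices $l^*_1 < \ldots < l^*_i$ of the top $i$ interval maxima together with representatives $r_t \in (j_{l^*_t-1}, j_{l^*_t}]$ attaining them, and form the $i \times i$ submatrix $W_{I,J}$ of the vertex matrix $W = (v_0 \mid \ldots \mid v_m)$ of $\Delta_\pi^s(a)$ with rows $I = \{\pi(r_t)\}_t$ and columns $J = \{v_{l^*_t}\}_t$. Direct inspection shows that $W_{I,J}$ has entries $a_{\pi(r_t)}$ strictly below, and $a_{\pi(r_t)} + 1$ on and above, the staircase induced by matching $r_t$ to $l^*_t$; consequently its tropical determinant equals $\sum_{t=1}^i \mu_{l^*_t} + i$ and is uniquely attained by that matching, so $W_{I,J}$ is tropically sign-generic. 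Since $\tconv(W) \subseteq P = \tconv(M)$, and the proof of Lemma~\ref{lem:tmi-monotone} in fact only uses sign-genericity of a chosen submatrix (and \emph{not} the extremality $\tdet(W_{I,J}) = \tminor_i(W)$) to derive $\tdet(W_{I,J}) \leq \tminor_i(M)$ via tropical Cauchy--Binet, chaining the inequalities yields $\Log{c_i^b(P)} \leq \tminor_i(M)$.

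The main obstacle is controlling $c_i(Q)$ for the scaled alcoved simplex $Q = D_b^a \widebar{\Delta_\pi^s(\zero)}$ by its $i$-face volumes. The Berline--Vergne framework supplies the bound, but a self-contained proof exploiting the product-like edge structure of $Q$ -- whose edge vectors $w_l = \sum_{r \in (j_{l-1}, j_l]} b^{a_{\pi(r)}} e_{\pi(r)}$ have pairwise disjoint coordinate supports -- may be cleaner, yield sharper constants, and fit the explicit spirit of the paper.
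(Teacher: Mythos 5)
This statement is posed in the paper as an open conjecture; the authors give no proof, so there is nothing to compare your argument against and I assess it on its own. Much of your architecture is sound: the reduction via Theorem~\ref{thm:tropEhrCoeffsRep} and $\Log{f+g}\leq\max\{\Log{f},\Log{g}\}$ to a per-simplex bound, the generalization of \eqref{eqn:voliDbaDelta} to the $i$-faces of $D_b^a\widebar{\Delta_\pi^s(\zero)}$, the identification of the resulting maximum with the sum of the $i$ largest interval maxima, and the staircase submatrix $W_{I,J}$ with uniquely attained tropical determinant equal to that sum plus $i$. You are also right that the Cauchy--Binet argument in Lemma~\ref{lem:tmi-monotone} only needs sign-genericity of the chosen submatrix, not its extremality, to yield $\tdet(W_{I,J})\leq\tminor_i(M)$.

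The gap is the step $\Log{c_i(Q)}\leq\max_{\dim F=i}\Log{\rvol(F)}$ for $Q=D_b^a\widebar{\Delta_\pi^s(\zero)}$ of dimension $m>i$, and your justification for it is wrong as stated. The Berline--Vergne coefficients $\mu(F)$ are \emph{not} uniformly bounded by a constant depending only on the ambient dimension: for the Reeve tetrahedra $T_h=\conv\{\zero,e_1,e_2,e_1+e_2+he_3\}$ one has $c_1(T_h)=2-\tfrac{h}{6}$ while all six edges are primitive segments of relative volume $1$, so no dimensional constant can bound $c_1$ in terms of the edge relative volumes. The $\mu(F)$ depend on the transverse cones of $Q$ along $F$, and for your simplices these cones degenerate as $b\to\infty$ (their generators are the vectors $D_b^a(e^\pi_{[j_{l+1}]}-e^\pi_{[j_l]})$, whose entries blow up at different rates), so boundedness of $\mu$ is precisely what has to be proved rather than quoted. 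It is plausibly true for these particular staircase simplices --- the disjoint coordinate supports of the edge vectors give their Ehrhart polynomials an iterated-sum structure from which one could hope to read off $\Log{c_i(Q)}$ directly, as your closing remark suggests --- but that computation is the actual mathematical content of the conjecture and is the piece that is missing. (For $m=i$ there is no issue, since $c_i$ of an $i$-dimensional lattice simplex is its relative volume, and that case alone recovers Theorem~\ref{thm:tlvoli_tmi} but not the conjecture.)
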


\begin{example}
For $\ell \geq 2$, consider the example $M = \left( \begin{array}{ccc} 0 & 0 & \ell-1 \\ 0 & 1 & \ell-1 \end{array} \right)$ again (see also Fig.~\ref{fig:latpoly+3}). 
Writing $P = \tconv(M)$, we have
\begin{align*}
\TL_P^b(k) &= \tfrac12(b-1)^2 (b^k)^2 + \tfrac12 ( b^{\ell-1} + 2b - 3 ) (b^k) + 1.
\end{align*}
Thus, $\Log{c^b_2(P)} = 2 \leq \ell = \tminor_2(M)$ and $\Log{c^b_1(P)} = \ell - 1 = \tminor_1(M)$.
\end{example}

\subsection{Tropical surface areas}

We end this section with a few musings on reasonable surface area concepts for tropical polytopes that naturally evolve from our previous studies.
For one, the tropical barycentric $(d-1)$-volumes may serve as surface areas.
Let us thus define the \emph{upper} and \emph{lower tropical surface area} of a tropical polytope $P \subseteq \TT^d$ as
\[
\tlsurf^+(P) := \tlvol_{d-1}^+(P) \qquad \text{and} \qquad \tlsurf^-(P) := \tlvol_{d-1}^-(P),
\]
respectively.

On the other hand, the second highest Ehrhart coefficient of an ordinary lattice polytope $Q \subseteq \R^d$ is a kind of \emph{discrete} surface area (cf.~\cite[Thm.~5.6]{beckrobins2007computing}).
More precisely, writing $\shp{k Q \cap \Z^d} = \sum_{i=0}^d c_i(Q) k^i$, we have
\begin{align}
c_{d-1}(Q) = \frac12 \sum_{F\text{ a facet of }Q} \frac{\vol_{d-1}(F)}{\det(\Z^d \cap \aff(F))} = \frac12 \sum_{F\text{ a facet of }Q} \rvol(F).\label{eqn:classical-c-d-1}
\end{align}
In this spirit, we may call
\[
\Log{c_{d-1}^b(P)}
\]
the \emph{discrete tropical surface area} of a tropical lattice polytope $P \subseteq \TT^d$.
Also, the formula for $c_{d-1}^b(P)$ in Lemma~\ref{lem:second+highest+trop+coeff} suggests this as a surface area concept.

Natural questions for future studies arise from these definitions.
First of all, we may ask for an isoperimetric inequality for tropical polytopes.
The precise question taking the homogeneity of the magnitudes into account is as follows:

\begin{question}
Are there constants $c_d^+,c_d^- \in \TT$ only depending on the dimension~$d$, such that
\[
\tlvol(P)^{\odot (d-1)} \leq c_d^\pm \odot \tlsurf^\pm(P)^{\odot d},
\]
for every tropical polytope $P \subseteq \TT^d$?
\end{question}

Depersin, Gaubert \& Joswig~\cite{DepersinGaubertJoswig:2017} established an isodiametric inequality for tropical simplices with respect to the functional $\tvol(\cdot)$ discussed in Section~\ref{sssect:tvol}, and obtained interesting families of tropical polytopes along the way.
We thus ask

\begin{question}
Is there an interesting isodiametric inequality with respect to $\tlvol(\cdot)$?
\end{question}

Regarding discrete surface area measures, we remark that Bey, Henk \& Wills~\cite[Prop.~4.2]{beyhenkwills2007notes} proved an isoperimetric type inequality for lattice polytopes $Q \subseteq \R^d$.
It states that $c_{d-1}(Q) \leq \binom{d+1}{2} \vol(Q)$.

\begin{question}
Does there exist a discrete isoperimetric inequality relating $\tlvol(\cdot)$ and $\Log{c_{d-1}^b(P)}$?
\end{question}

\section{Computational aspects}
\label{sec:computational+aspects}

A matrix $M \in \TT^{r \times r}$ is called \emph{non-singular} if the value of the tropical determinant is attained at most once. 
The \emph{tropical rank} $\trk(M)$ of a matrix $M \in \TT^{d \times m}$ is the size of a largest non-singular square submatrix of~$M$.
This notion was introduced and studied by Develin, Santos \& Sturmfels who prove in~\cite[Thm.~4.2]{DevelinSantosSturmfels:2005} that the tropical rank equals the dimension of~$P=\tconv(M)$ seen as a polytopal complex.

Recall that by Theorem~\ref{thm:tropEhrCoeffsRep} there exists an $i$-dimensional element in~$\F_P$, if the tropical Ehrhart coefficient $c_i^b(P)$ is non-vanishing.
This readily implies


\begin{lemma}
\label{lem:trk-equals-nonnullEC}
Let $M \in \TT\N^{d \times m}$ and let $P = \tconv(M)$. Then,
\[
\trk(M) \geq \max\left\{i : c_i^b(P) \neq 0 \right\}.
\]
\end{lemma}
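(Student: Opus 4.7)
The plan is to combine the explicit representation of the tropical Ehrhart coefficients from Theorem~\ref{thm:tropEhrCoeffsRep} with the Develin--Santos--Sturmfels identification of $\trk(M)$ as the (complex-theoretic) dimension of $P = \tconv(M)$.

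First I would set $i^\star := \max\{i : c_i^b(P) \neq 0\}$ and inspect the formula
\[
c_{i^\star}^b(P) = \sum_{\substack{\Delta_\pi^s(a) \in \T_P\\ m=\dim(\Delta_\pi^s(a)) \geq i^\star}} (-1)^{m-i^\star} (b-1)^{i^\star} c_{i^\star}(D_b^a \widebar{\Delta^s_\pi(\zero)})
\]
from Theorem~\ref{thm:tropEhrCoeffsRep}. Since $c_{i^\star}^b(P) \neq 0$, the summation cannot be empty, so there exists at least one alcoved simplex $\Delta_\pi^s(a) \in \T_P$ whose dimension is $\geq i^\star$. Because $\T_P$ refines the covector decomposition $\F_P$, this alcoved simplex is contained in some cell of $\F_P$ of dimension at least $i^\star$, and so $\dim(P) \geq i^\star$ in the sense of the polytopal complex.

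Next I would invoke \cite[Thm.~4.2]{DevelinSantosSturmfels:2005}, which states precisely that $\trk(M)$ equals this polytopal-complex dimension of $P = \tconv(M)$. Combined with the previous paragraph, this yields
\[
\trk(M) = \dim(P) \geq i^\star = \max\{i : c_i^b(P) \neq 0\},
\]
which is the desired inequality.

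The argument is essentially a one-line consequence of the two cited results, so there is no real obstacle; the only subtlety to make explicit is that a nonempty sum in~\eqref{eqn:cib-representation} forces the existence of an at-least-$i$-dimensional \emph{alcoved} simplex in $\T_P$, and hence of an at-least-$i$-dimensional cell in $\F_P$, which is exactly the dimension notion used to define the tropical rank via \cite[Thm.~4.2]{DevelinSantosSturmfels:2005}. (In fact, since Theorem~\ref{thm:tropEhrPoly} ensures that the tropical Ehrhart polynomial has degree exactly $\dim(P)$, the inequality is actually an equality, but only the stated direction is needed here.)
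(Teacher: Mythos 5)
Your argument is exactly the one the paper uses: a nonzero $c_{i}^b(P)$ forces the sum in Theorem~\ref{thm:tropEhrCoeffsRep} to be nonempty, hence an at-least-$i$-dimensional cell exists in $\F_P$, and the Develin--Santos--Sturmfels identification of $\trk(M)$ with the dimension of the polytopal complex finishes the proof. The proposal is correct and matches the paper's reasoning, including the correct (but unneeded) observation that the inequality is in fact an equality.
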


Kim \& Roush~\cite[Thm.~13]{kimroush2005factorizations} showed that deciding if $\trk(M) \geq k$ is NP-complete.
Their proof shows that this is true even for $0/1$-matrices and thus we conclude

\begin{theorem}
  Let $P \subseteq \TT^d$ be a tropical lattice polytope.
  Deciding whether $\max\left\{i : c_i^b(P) \neq 0 \right\} \geq k$ is in general NP-hard.
\end{theorem}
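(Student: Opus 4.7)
The plan is to reduce from the tropical rank decision problem, which by the Kim--Roush result cited just above the theorem is NP-complete even when restricted to $0/1$ matrices. Given such an instance, namely a matrix $M \in \{0,1\}^{d \times m}$ and a threshold $k \in \N$, I would form the tropical polytope $P := \tconv(M)$. Since the columns of $M$ lie in $\{0,1\}^d \subseteq \TT\N^d$ and the vertex set of $P$ is a subset of these columns (by the tropical Minkowski--Weyl theorem, as recalled in Section~\ref{sect:alcovedtriang}), $P$ is a canonical tropical lattice polytope, so the quantity $\max\left\{i : c_i^b(P) \neq 0 \right\}$ is well-defined and the reduction produces a legitimate instance of the target problem.

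The key step is to upgrade Lemma~\ref{lem:trk-equals-nonnullEC} to the equality
\[
\max\left\{i : c_i^b(P) \neq 0 \right\} = \trk(M) .
\]
The direction ``$\leq$'' is precisely Lemma~\ref{lem:trk-equals-nonnullEC}. For the converse direction, I would invoke Theorem~\ref{thm:tropEhrPoly} (built on Theorem~\ref{thm:Ehrhart-max-times}), which asserts that the tropical Ehrhart polynomial $\TL_P^b(k)$, viewed as a polynomial in $b^k$, has degree \emph{exactly} $\dim(P)$. This forces the leading coefficient $c_{\dim(P)}^b(P)$ to be non-zero. Combining this with the Develin--Santos--Sturmfels identity $\dim(P) = \trk(M)$ recalled at the start of Section~\ref{sec:computational+aspects} then yields $\max\left\{i : c_i^b(P) \neq 0 \right\} \geq \trk(M)$.

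Once the equality is established, the polynomial-time reduction is immediate: the algorithm simply outputs $(P,k)$ on input $(M,k)$, and the yes-instances coincide under the equality displayed above. Since the tropical rank problem is already NP-complete on $0/1$-inputs, deciding whether $\max\{i : c_i^b(P) \neq 0\} \geq k$ is NP-hard as well.

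The only delicate point, and therefore the main obstacle in my proposal, is verifying that the degree-exactness of the Ehrhart polynomial is inherited through the semiring isomorphism $\log_b$; concretely, one must check that $\dim(Q) = \dim(P)$ where $Q = \exp_b(P) \subseteq (S_{\max,\cdot})^d$, so that Theorem~\ref{thm:Ehrhart-max-times} produces a polynomial of degree equal to $\dim(P)$ rather than merely ``at most $\dim(P)$''. This is implicit in the proof of Theorem~\ref{thm:tropEhrPoly}, since $\exp_b$ is a homeomorphism of polyhedral complexes and so preserves the dimension of the covector decomposition.
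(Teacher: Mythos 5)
Your proposal is correct and follows essentially the same route as the paper: reduce from the Kim--Roush NP-completeness of deciding $\trk(M)\geq k$ on $0/1$-matrices, using the identity $\max\{i : c_i^b(P)\neq 0\} = \dim(P) = \trk(M)$. In fact you make explicit the converse inequality (non-vanishing of the leading coefficient $c_{\dim(P)}^b(P)$, via the exact degree statement in Theorem~\ref{thm:Ehrhart-max-times}) that the paper leaves implicit when it passes from Lemma~\ref{lem:trk-equals-nonnullEC} to the hardness conclusion.
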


Deciding whether the tropical barycentric volume $\tlvol(P)=\Log{c_d^b(P)}$ is non-vanishing is a supposedly easier problem.
For example, if $P$ is a pure tropical lattice polytope, then by Corollary~\ref{cor:pure}, we have $\tlvol(P) = \qtvol^+(M)$.
In this case, the latter magnitude and thus $\tlvol(P)$ can be computed in time $O(m^3)$ as shown in~\cite{DepersinGaubertJoswig:2017}.
On the other hand, this decision problem is equivalent to checking non-singularity of the defining matrix~$M$, which is equivalent to checking feasibility of a tropical linear program or deciding winning positions in mean-payoff games and lies in NP~$\cap$~coNP (cf.~\cite[\S\,2.2]{gaubertmaccaig2019approximating}).

\begin{proposition}
  Computing the tropical barycentric volume $\tlvol(P)$ is at least as hard as checking feasibility of a tropical linear inequality system. 
\end{proposition}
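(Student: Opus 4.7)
The plan is to establish the hardness by a short chain of polynomial-time equivalences that reduce tropical linear inequality feasibility to the decision problem ``is $\tlvol(P)>-\infty$?'', which any algorithm computing $\tlvol(P)$ solves as a trivial post-processing step. All the required equivalences are already present in the preceding material; the proof will mostly amount to stringing them together.

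First, I would use the non-singularity clause of Proposition~\ref{prop:tlvoli-props}, which asserts that $\tlvol(P)>-\infty$ if and only if $\Tr_d(P)\neq\emptyset$, i.e., if and only if $P$ carries at least one full-dimensional cell in its covector decomposition. For $P=\tconv(M)$ with $M\in\TT^{d\times m}$, the identity $\dim\tconv(M)=\trk(M)$ of Develin--Santos--Sturmfels (recalled at the beginning of Section~\ref{sec:computational+aspects}) translates this condition into $\trk(M)=d$, which in turn is equivalent to the existence of a tropically non-singular $d\times d$ submatrix of $M$.

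Second, I would invoke the standard polynomial-time reductions between tropical non-singularity testing, mean-payoff games, and tropical linear inequality feasibility documented in~\cite[\S\,2.2]{gaubertmaccaig2019approximating} (see also~\cite{AllamigeonBenchimolGaubertJoswig:2018}). These produce, from any instance of tropical linear inequality feasibility, a tropical matrix $M$ such that the original system is feasible if and only if $\trk(M)=d$. Setting $P:=\tconv(M)$ and combining with the first step gives an equivalence between feasibility of the input system and the predicate $\tlvol(P)>-\infty$. An oracle for $\tlvol$ therefore decides tropical LP feasibility after a single comparison with $-\infty$.

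The main obstacle here is not mathematical but bookkeeping: one has to check that the composed reduction (from LP feasibility, through mean-payoff games, to the matrix $M$, to the polytope $\tconv(M)$) runs in polynomial time and keeps the size of the intermediate objects polynomially bounded in the encoding size of the inequality system. Provided the reductions cited above are indeed polynomial, as is the case in the references, the proposition is an immediate consequence of the chain of equivalences just described.
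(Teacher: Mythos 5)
Your proposal is correct and follows essentially the same route as the paper, which justifies the proposition by the chain $\tlvol(P)\neq-\infty \Leftrightarrow \Tr_d(P)\neq\emptyset \Leftrightarrow$ non-singularity/full tropical rank of the defining matrix $\Leftrightarrow$ tropical linear feasibility (via the cited equivalences with mean-payoff games in~\cite[\S\,2.2]{gaubertmaccaig2019approximating}). The paper leaves this chain implicit in the paragraph preceding the proposition; you merely spell out the same steps, including the Develin--Santos--Sturmfels identification of tropical rank with the dimension of the covector decomposition.
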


One way to compute the tropical barycentric volume in Definition~\ref{def:tropical+volume+from+lattice} is via the explicit determination of the covector decomposition, see~\cite{Joswig:2009}, involving a classical convex hull computation.

We propose another possibility which is closer to the computation of the tropical dequantized volume defined in Definition~\ref{def:dequantized+volume}.
For this, we start by considering a \emph{tropical simplex}, namely the tropical convex hull of a $d \times (d+1)$ matrix $A \in \TT^{d \times (d+1)}$.
We let $\widebar{A} \in \TT^{(d+1) \times (d+1)}$ arise from~$A$ by appending a zero-th row filled with tropical ones $0$.
With the Hungarian method, one can compute the permutation attaining the tropical determinant $\tdet(\widebar{A})$ in $O(d^3)$, see~\cite[\S\,1.6.4]{Butkovic:2010}. 
Using the dual variables and reordering the columns, we can assume that the tropical determinant $\tdet(\widebar{A})$ is attained at the identity permutation, that all entries on the diagonal are~$0$ and that all off-diagonal entries are non-positive.
One can deduce from~\cite[Lem.~4.3.2]{Butkovic:2010} that the columns of the \emph{Kleene star} $\widebar{A}^* = \widebar{A} \oplus \widebar{A}^2 \oplus \dots \oplus \widebar{A}^d$ provide generators of the $d$-trunk of $\tconv(A)$, by appropriately scaling so that the zero-th row consists only of $0$'s again.
Computing the Kleene star takes again $O(d^3)$ time.
In summary, we have

\begin{proposition} \label{prop:barycenter+dtrunk+trop+simplex}
  The $d$-trunk of a tropical simplex in $\TT^d$ is a polytrope.
  Its tropical barycenter can be computed in time $O(d^3)$.
\end{proposition}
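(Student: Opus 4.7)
The plan is to follow the algorithmic roadmap laid out in the paragraph preceding the statement, separating the proof into a structural claim (the $d$-trunk is a polytrope) and a complexity claim (the barycenter is computable in $O(d^3)$).

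Starting from a tropical simplex $P = \tconv(A)$ with $A \in \TT^{d\times(d+1)}$, I would first form $\widebar{A} \in \TT^{(d+1)\times(d+1)}$ by prepending a row of tropical ones (zeros) and apply the Hungarian method to compute, in $O(d^3)$ time, a permutation $\sigma$ realising $\tdet(\widebar{A})$ together with the dual potentials. Reindexing columns by $\sigma^{-1}$ and subtracting the dual potentials from rows and columns corresponds to left and right tropical multiplication by matrices in $\cR_{d+1}$, so by the rotation invariance of $\tlvol$ (Proposition~\ref{prop:tlvol-props}~\romannumeral3)) and the fact that tropical diagonal rescaling is an isometry of the covector decomposition, this does not alter $\Tr_d(P)$ up to the explicit bijection induced by the rescaling. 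I may therefore assume that $\tdet(\widebar A)$ is attained at the identity, the diagonal of $\widebar A$ is zero, and all off-diagonal entries are non-positive.

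Under this normal form, I would invoke \cite[Lem.~4.3.2]{Butkovic:2010} to argue that the columns of the Kleene star $\widebar{A}^* = \widebar{A} \oplus \widebar{A}^{\odot 2} \oplus \ldots \oplus \widebar{A}^{\odot d}$, after normalising so that the prepended top coordinate is~$0$, form a generating set of $\Tr_d(P)$: the off-diagonal entries of $\widebar A^*$ record the tightest chains of tropical inequalities among the simplex generators, and the tentacle cells of $P$ precisely correspond to the columns that collapse in the Kleene iteration. A Kleene star of a matrix with zero diagonal and non-positive off-diagonal entries is known to describe a polytrope (it is the intersection of half-spaces $x_i - x_j \leq (\widebar{A}^*)_{ij}$ and hence classically convex, and it is generated tropically by its own columns), so $\Tr_d(P)$ is a polytrope. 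The tropical barycenter of any tropically convex set coincides with its coordinate-wise maximum, and for a finitely generated polytrope this maximum is simply the tropical sum of its generators; given the columns of~$\widebar A^*$, this is an additional $O(d^2)$ operations. Since the Kleene star itself is computable in $O(d^3)$ via, e.g., the Floyd--Warshall-style iteration, the total running time is $O(d^3)$.

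The main obstacle will be the assertion that the normalised Kleene star generates precisely the $d$-trunk rather than all of $P$. This is the content of the cited Butkovic lemma combined with the geometry of the alcoved decomposition: the tentacle cells of $P$ come from inequalities where the tropical determinant of a submatrix of $\widebar A$ is attained more than once, and these are exactly the rows/columns that are absorbed by the Kleene closure, leaving only the "full-dimensional" contributions. Making this identification precise, rather than merely quoting \cite[Lem.~4.3.2]{Butkovic:2010}, is the delicate step; once accepted, the polytrope property and the runtime bound follow by the straightforward counting above.
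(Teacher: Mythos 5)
Your proposal follows the paper's argument essentially verbatim: append the row of tropical ones to get $\widebar{A}$, normalise via the Hungarian method and its dual variables in $O(d^3)$, take the Kleene star $\widebar{A}^*$ (also $O(d^3)$) whose rescaled columns generate the $d$-trunk by \cite[Lem.~4.3.2]{Butkovic:2010}, and read off the barycenter as the coordinate-wise maximum of those generators. The step you flag as delicate --- that the Kleene star columns generate exactly the $d$-trunk rather than all of $P$ --- is likewise delegated to the Butkovi\v{c} lemma in the paper, so your account matches both the route and its level of detail.
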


We denote the tropical barycenter of the $d$-trunk of a tropical simplex $S \subseteq \TT^d$ by~$\bt(S)$.

\begin{proposition}
Let $M \in \TT^{d \times m}$ and let $P = \tconv(M)$.
The tropical barycentric volume $\tlvol(P)$ is the maximum
  \[
  \tlvol(P) = \max_{J \in \binom{[m]}{d+1}} \bigg\{ \sum_{i=1}^d \bt(\tconv(M_J))_i : \widebar{M_J}~\text{non-singular} \bigg\} \enspace .
  \]
  It can be computed in time $O(\binom{m}{d+1} \cdot d^3)$.
\end{proposition}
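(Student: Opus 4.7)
The plan is to prove the stated identity by establishing both inequalities and to read off the complexity bound directly from Proposition~\ref{prop:barycenter+dtrunk+trop+simplex}. Write $S_J := \tconv(M_J)$ for $J \in \binom{[m]}{d+1}$. A key preliminary point is that $\widebar{M_J}$ is non-singular if and only if $S_J$ is $d$-dimensional: non-singularity means that the tropical determinant of $\widebar{M_J}$ is uniquely attained, which places the $d+1$ columns of $M_J$ in tropically general position, and by~\cite[Thm.~4.2]{DevelinSantosSturmfels:2005} this is equivalent to $\dim(S_J) = d$. Whenever this holds, Proposition~\ref{prop:barycenter+dtrunk+trop+simplex} identifies $\Tr_d(S_J)$ with a polytrope whose componentwise maximum equals $\bt(S_J)$.

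To obtain the inequality $\sum_{i=1}^d \bt(S_J)_i \leq \tlvol(P)$ for every admissible~$J$, I would argue that $\Tr_d(S_J) \subseteq \Tr_d(P)$. Indeed, the covector decomposition of $P$ refines that of $S_J$ by incorporating the constraints coming from the remaining columns of $M$, so each full-dimensional cell of $S_J$ is subdivided only into full-dimensional subcells of $P$ together with their faces, none of which is a tentacle of~$\F_P$. Passing to closures, $\Tr_d(S_J) \subseteq \Tr_d(P)$, and in particular $\bt(S_J) \in \Tr_d(P)$; the inequality then follows from Definition~\ref{def:tropical+volume}.

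For the reverse inequality, let $b$ denote the tropical barycenter of $\Tr_d(P)$, so that $\tlvol(P) = \sum_{i=1}^d b_i$ by definition. Then $b$ is a pseudovertex of $P$ lying in some full-dimensional cell of~$\F_P$, and its Develin--Sturmfels covector picks out at most $d+1$ columns of $M$ that realise $b$ as a tropical convex combination. Enlarging this selection to a subset $J \in \binom{[m]}{d+1}$ with $\widebar{M_J}$ non-singular (possible since $P$ itself is full-dimensional, so tropically generic $(d+1)$-subsets exist around~$b$) gives $b \in S_J \subseteq P$. Because $b$ is componentwise maximal in all of $P$, it is componentwise maximal in $S_J$ as well, whence $b = \bt(S_J)$ and the coordinate sums match. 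The main obstacle will be to execute this extraction of~$J$ rigorously: one must combine a tropical Carath\'{e}odory step with a general-position completion argument, making sure that the columns dictated by the covector at~$b$ can be augmented to a $(d+1)$-subset without dropping the dimension of the resulting simplex.

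The runtime statement is then immediate. The algorithm enumerates the $\binom{m}{d+1}$ subsets $J$; for each, the Hungarian method decides non-singularity of $\widebar{M_J}$ in $O(d^3)$, and, when it succeeds, the Kleene-star procedure of Proposition~\ref{prop:barycenter+dtrunk+trop+simplex} computes $\bt(S_J)$ in a further $O(d^3)$, yielding the claimed $O\bigl(\binom{m}{d+1}\, d^3\bigr)$ bound overall.
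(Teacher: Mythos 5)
Your overall strategy --- decompose $P$ into the tropical simplices $S_J=\tconv(M_J)$ via tropical Carath\'eodory and compare $d$-trunks --- is the same as the paper's, and your first inequality ($\Tr_d(S_J)\subseteq\Tr_d(P)$, hence $\one^\intercal\bt(S_J)\leq\tlvol(P)$) is sound. The reverse inequality, however, contains a genuine gap. The pivotal sentence ``because $b$ is componentwise maximal in all of $P$, it is componentwise maximal in $S_J$ as well, whence $b=\bt(S_J)$'' is false on two counts. First, $b$ is the barycenter of $\Tr_d(P)$, not of $P$: the example of Figure~\ref{fig:latpoly+3} has $b=(1,1)^\intercal$ while the point $(\ell-1,\ell-1)^\intercal\in P$ strictly dominates it, so $b$ need not be componentwise maximal in $P$, nor in an $S_J$ containing a tentacle of $P$. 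Second, even if $b$ were maximal in $S_J$, the quantity $\bt(S_J)$ is by definition the barycenter of $\Tr_d(S_J)$, so what you actually need is $b\in\Tr_d(S_J)$, i.e.\ that $b$ lies in the closure of a \emph{full-dimensional} cell of $S_J$ --- mere membership $b\in S_J$ (which is all that Carath\'eodory plus a general-position completion of the covector at $b$ gives you) does not suffice, since $b$ could sit on a lower-dimensional tentacle of that particular $S_J$. You correctly flag the extraction of $J$ as the main obstacle, but the completion argument you sketch does not close it.

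A way to repair the step: let $G$ be a $d$-cell of $\F_P$ whose barycenter $g$ realises $\tlvol(P)=\one^\intercal g$, and for $\eps>0$ consider $W_\eps=\{x\in\overline{G}:\one^\intercal x>\one^\intercal g-\eps\}$, which is $d$-dimensional because $\overline{G}$ is a full-dimensional polytrope. Since $\overline{G}=\bigcup_J(\overline{G}\cap S_J)$ is a finite union of closed sets, some $S_{J_0}\cap W_\eps$ is $d$-dimensional, so $\Tr_d(S_{J_0})$ meets $W_\eps$ and $\one^\intercal\bt(S_{J_0})>\one^\intercal g-\eps$; as there are only finitely many $J$, one index works for all $\eps$ simultaneously, giving $\one^\intercal\bt(S_{J_0})\geq\tlvol(P)$. (One should also note the degenerate case $\dim P<d$, where both sides are $-\infty$; your parenthetical assumes $P$ full-dimensional.) The runtime paragraph is fine and matches the paper.
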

\begin{proof}
  By the tropical Carath\'{e}odory theorem, the tropical convex hull of~$M$ is the union of the tropical simplices $\tconv(M_J)$, $J \in \binom{[m]}{d+1}$.
  We compute the tropical barycenter of each of these tropical simplices in $O(d^3)$ time by Proposition~\ref{prop:barycenter+dtrunk+trop+simplex}. 
\end{proof}

\begin{remark}
  One could consider the tropical barycentric volume $\tlvol(P)$ as a robust version of a transportation problem.
  The tropical dequantized volume is the generalization of a maximal matching problem, namely a transportation problem~\cite[Cor.~18]{DepersinGaubertJoswig:2017}.
  The tropical barycentric volume is the solution of the transportation problem for its $d$-trunk, without the lower-dimensional parts.
  In this sense, it is more robust with respect to perturbations.
\end{remark}

\begin{question}
Let $P \subseteq \TT^d$ be a tropical polytope.
\begin{enumerate}[i)]
 \item How fast can we compute $\tlvol(P)$?
 \item What is the computational complexity of deciding $\tlvol(P) \neq -\infty$?
\end{enumerate}
\end{question}

\noindent Note that computing the volume of an ordinary polytope is \#P-hard (\cite{DyerFrieze:1988}).

\section{Acknowledgments}

We thank Josephine Yu for motivating us with her interest in tropical Ehrhart theory.
Further we thank Michael Joswig and Christoph Hunkenschr\"oder for helpful conversations, and Raman Sanyal for pointing us to the paper of Bj\"orner \& Wachs~\cite{bjoernerwachs1996shellableI} and for comments on an earlier draft of the paper.
Thanks to Leon Zhang for communicating related work.

\bibliographystyle{amsplain}
\bibliography{mybib}

\end{document}